\newenvironment{proof*}[1]
  {%
   \begin{proof}}
  {\end{proof}}
\begin{document}

\title{Sequences of symmetry groups of infinite words\thanks{Supported by Russian Foundation of Basic Research (grant 20-01-00488).}}

\author{Sergey Luchinin\inst{1}, Svetlana Puzynina\inst{1,2}}

\authorrunning{}

\institute{Saint Petersburg State University, Russia \\
\and Sobolev Institute of Mathematics, Russia\\
\email{\{serg2000lrambler.ru,s.puzynina\}@gmail.com}
}

\maketitle              

\begin{abstract}
In this paper we introduce a new notion of a sequence of symmetry groups of an infinite word. Given a subgroup $G_n$ of the symmetric group $S_n$, it acts on the set of finite words of length $n$ by permutation. We associate to an infinite word $w$ a sequence $(G_n(w))_{n\geq 1}$ of its symmetry groups: For each $n$, a symmetry group of $w$ is a subgroup $G_n(w)$ of the symmetric group $S_n$ such that $g(v)$ is a factor of $w$ for each permutation $g \in G_n(w)$ and each factor $v$ of length $n$ of $w$. We study general properties of the symmetry groups of infinite words and characterize the sequences of symmetry groups of several families of infinite words. We show that for each subgroup $G$ of $S_n$ there exists an infinite word $w$ with $G_n(w)=G$. On the other hand, the structure of possible sequences $(G_n(w))_{n\geq 1}$ is quite restrictive: we show that they cannot contain for each order $n$ certain cycles, transpositions and some other permutations. The sequences of symmetry groups can also characterize a generalized periodicity property. We prove that symmetry groups of Sturmian words and more generally Arnoux-Rauzy words are of order two for large enough $n$; on the other hand, symmetry groups of certain Toeplitz words have exponential growth. 

\keywords{Infinite words \and Symmetry groups \and Arnoux-Rauzy words \and Toeplitz words}
\end{abstract}

\section{Introduction} 
In this paper we introduce and study a new notion of a symmetry group of an infinite word. This group is related to certain algebraic and combinatorial properties of the language of factors of an infinite word, and is defined as follows. A permutation of order $n$ acts on a word of length $n$ in a natural way by permuting its letters. For every integer $n$, the symmetry group of order $n$ of an infinite word $w$ is defined as the set of permutations of order $n$ that map every factor of length $n$ of $w$ to a factor of $w$. It is easy to verify that this set indeed forms a subgroup of a symmetric group of order $n$. So, we associate with an infinite word a sequence $(G_n)_{n\geq 1}$ of subgroups of $S_n$ characterizing symmetries of the language of  factors of the infinite word. A related concept of group complexity of infinite words has been studied in \cite{CPZ17}.

We investigate general properties of symmetry groups of infinite words. We show that any subgroup of the symmetry group is a group of symmetries of some infinite word. However, the conditions for a sequence $(G_n)_{n\geq 1}$ to be a sequence of symmetry groups of an infinite word seem to be quite restrictive. We provide a series of conditions of this kind: for example,  $(G_n)_{n\geq 1}$ cannot contain certain cycles and transpositions infinitely often, unless the word is universal (i.e., contains all words as its factors), and so $G_n = S_n$ for each $n$. 
The sequences of symmetry groups also give a characterization of  generalized periodicity called periodicity by projection, where in each infinite progression with the difference equal to the period we have either a unary word or a universal word over some subalphabet. 

The symmetry groups of certain infinite words and classes of infinite words are studied. In particular, we characterize the symmetry groups of Sturmian words \cite[Chapter~2]{Lothaire} and more generally Arnoux-Rauzy words \cite{episturmian}. Sturmian words can be defined as infinite aperiodic
words which have $n+1$ distinct factors for each length $n$. They admit various characterizations in combinatorial, algebraic and arithmetical form. Their most natural generalization to non-binary alphabet, Arnoux-Rauzy words, share most of the structural properties of Sturmian words. We show that for Sturmian words and more generally Arnoux-Rauzy words the symmetry groups are small: they are of order 2 for sufficiently large $n$. The same situation holds for the Thue-Morse word \cite{Thue}.

Another family of words considered in the paper, Toeplitz words, can be defined iteratively as follows. Take an infinite periodic word $w$  on the alphabet $\Sigma \cup \{?\}$, where $?$ corresponds to a
hole. Fill the holes iteratively by substituting the word itself into the remaining
holes. In the limit, all holes are filled and an infinite word is defined. The most studied Toeplitz words are the paperfolding word  \cite{paperfolding}, giving rise to the famous fractal curve, and the period-doubling word \cite{period-doubling}. We obtain recurrent formulas for the symmetry groups of these words, showing that contrary to Arnoux-Rauzy and the Thue-Morse words, their symmetry groups are large; more precisely, they have exponential growth order.

 The paper is organized as follows. In Section 2, we fix some notation and introduce the notion of a symmetry group. We prove that the symmetry group of an infinite word is indeed a group, and show that any permutation group is a symmetry group of some word. In Section \ref{sec:sequencies}, we provide a series of conditions on the sequence $(G_n)_{n\geq 1}$ implying that the language of the underlying word contains all finite words. We also show that the sequences of symmetry groups give a characterization of a generalized periodicity property.
In Sections 4--7, we describe symmetry groups of Arnoux-Rauzy words,  the Thue-Morse word, the period-doubling word and the paperfolding word. In Section 8, we study symmetry groups of a subclass of Toeplitz words. We finish with some open problems.

Some of the results of the paper have been presented at Developments in Languages Theory 2021 conference \cite{LP2021}. This paper contains complete proofs of all results omitted in the conference version, as well as some new general properties of sequences of symmetry groups, in particular, related to periodicity of infinite words.

\section{Preliminaries}\label{sec:preliminaries}

An \textit{alphabet} $\Sigma$ is a finite set of letters. A \textit{word} on $\Sigma$ is a finite or infinite sequence of letters from $\Sigma$, i.e., $u =u_1u_2 \cdots$, where $u_i \in \Sigma$. We let $\Sigma^*$ denote the set of finite words on  $\Sigma$. The \textit{length} of the finite word $u=u_1\cdots u_n$ is the number of its letters: $|u|=n$. An empty word is denoted by $\varepsilon$, and we set $|\varepsilon|=0$. We let $u^R$ denote the \emph{reversed word} $u$: $u^R = u_nu_{n-1}\cdots u_1$. A Parikh vector of a finite word $u$ is a vector of length $|\Sigma|$ which has for each letter $a$ the number of occurrences of the letter $a$ in $u$ as its $a$'s coordinate.

A finite word $u=u_1 \cdots u_n$ is a \emph{factor} of a finite or infinite word $w$ if there exists $i \in \mathbb{N}$ such that $u= w_i\cdots w_{i+n-1}$.  We let $F(w)$ denote the
set of factors of $w$ and $F_n(w)$ the set of its factors of length $n$. 
A factor $u$ of a finite or infinite word $w$ is said to be \emph{left special} (resp., \emph{right special}) in $w$ if there exists at least two distinct letters $a, b$ such that $au$ and $bu$ (resp., $ua$, $ub$) are factors of $w$. An infinite word is called \emph{recurrent} if each of its factors occurs infinitely often. An infinite word $w$ is called \emph{universal} over an alphabet $\Sigma$ if $F(w)=\Sigma^*$.

Let $S_n$ be a group of permutations on $n$ elements. We use the following notation: $
[m_1,m_2, \ldots, m_n]=\left(\begin{array}{cccc} 1 & 2 & \dots & n \\ m_1 & m_2 & \dots & m_n \end{array}\right).$ This should be distinguished from the notation $(m_1m_2 \ldots m_k)$, which means the cycle where the element $m_i$ takes the place of $m_{i+1}$ for each $i$. 
We say that a cycle is  \emph{independent} in a permutation $g \in S_n$ if it is one of the cycles in the decomposition of $g$ into disjoint cycles.

Let $u = u_1u_2 \cdots u_n $ be a word of length $n$, and $ g \in S_n $  be a permutation on $n$ elements. Then the permutation $g$ acts on the word $u$ in the following way: $ g(u) = u_{g^{-1} (1)}u_{g^{-1} (2)}\ldots u_{g^{-1}(n)}$.

\begin{definition} 
The symmetry group $G_n(w)$ of an infinite word $w$ of order $n$ is defined as follows:
$$G_{n}(w) = \{\pi \in S_n \mid \pi(x)\in F_n(w) \,\, \text{for each} \,\, x\in F_n(w)  \}.$$
\end{definition}

In what follows when it is clear for which word this symmetric group is considered, we omit the argument and write $G_n$ for brevity.

\begin{proposition} 
Let $w$ be an infinite word, then $G_n(w)$ is a subgroup of $S_n$.
\end{proposition}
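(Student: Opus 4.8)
The plan is to verify the standard subgroup criterion for $G_n(w)$: it is nonempty and closed under composition and inversion. The only point requiring any care is that the prescribed action of $S_n$ on words of length $n$ is genuinely a \emph{left} action, so that composition of permutations corresponds to composition of the induced maps on factors; this is where the $g^{-1}$ in the definition $g(u) = u_{g^{-1}(1)}\cdots u_{g^{-1}(n)}$ must be handled correctly.

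First I would record that $(gh)(u) = g(h(u))$ for all words $u$ of length $n$ and all $g,h\in S_n$. Comparing $i$-th letters, $g(h(u))_i = (h(u))_{g^{-1}(i)} = u_{h^{-1}(g^{-1}(i))} = u_{(gh)^{-1}(i)} = ((gh)(u))_i$, which is exactly the left-action identity; and clearly $\mathrm{id}(u) = u$. Next, nonemptiness: since $w$ is infinite, $F_n(w)\neq\emptyset$, and $\mathrm{id}(x)=x\in F_n(w)$ for every $x\in F_n(w)$, so $\mathrm{id}\in G_n(w)$. For closure under composition, take $\pi,\sigma\in G_n(w)$ and $x\in F_n(w)$; then $\sigma(x)\in F_n(w)$, hence $(\pi\sigma)(x) = \pi(\sigma(x))\in F_n(w)$, so $\pi\sigma\in G_n(w)$.

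Finally, closure under inversion follows from finiteness, in either of two equivalent ways. Option one: $F_n(w)$ is finite, and each $\pi\in G_n(w)$, being the restriction of a bijection of the set of all length-$n$ words over $\Sigma$, maps $F_n(w)$ injectively into itself, hence bijectively onto itself; therefore $\pi^{-1}$ also maps $F_n(w)$ into itself and lies in $G_n(w)$. Option two: $S_n$ is finite, so $\pi$ has finite order $m$, and iterating closure under composition gives $\pi^{-1} = \pi^{m-1}\in G_n(w)$. Either way $G_n(w)$ is a subgroup of $S_n$. I do not anticipate a genuine obstacle; the whole argument is routine once the left-action identity is in place, and that identity is the single step worth writing out carefully.
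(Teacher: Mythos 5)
Your proposal is correct and follows essentially the same route as the paper: verify closure under composition, membership of the identity, and closure under inversion, with your ``option two'' (using finiteness of $S_n$ to write $\pi^{-1}=\pi^{m-1}$) being exactly the paper's argument for inverses. The extra care you take with the left-action identity $(gh)(u)=g(h(u))$ and the alternative finiteness-of-$F_n(w)$ argument are fine additions but do not change the substance.
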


\begin{proof} 
For the proof it is enough to check the three properties of groups: 

\begin{enumerate}

\item  Let $g,h \in G_n$, then $hg \in G_n$. 
Indeed, let $x$ be a factor of $w$. By definition $G_n$, we have that $g(x)$ is a factor of $w$. So $h(g(x))=hg(x)$ is a factor of $w$. 

\item Let $e \in S_n$ be the identity permutation. Then $e \in G_n$. 
For each finite word $x$, we have $e(x) = x$. So if $x$ is a factor of $w$,  then $e(x)$ is a factor of $w$. 

\item For $g\in G_n$, we have $g^{-1} \in G_n$. 
Since the group $S_n$ is finite, there is an integer $k$ that $g^k = 1$. Then $g^{k-1} = g^{-1}$ and from item 1 we know that $g^{k-1}(x)$ is a factor of $w$. Thus $g^{-1}(x)$ is a factor of $w$.

\end{enumerate}

\end{proof}

\begin{example} 
The symmetry group $G_n$ of a universal word is $S_n$ for each $n$. Indeed, for each $g \in S_n$ and each $v \in \Sigma^n$ we have $g(v)\in F_n(w)$, since all finite words are factors. In particular, this holds for $|\Sigma|=1$: $g(a^n) = a^n$ is a factor of $a^{\mathbb{N}}$.
\end{example}

\begin{proposition} 
For each $n$ and each subgroup $G \leq S_n$, there exists an infinite word $w$ with the symmetry group $G_n(w)=G$. 
\end{proposition}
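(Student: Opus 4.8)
The plan is to translate the statement into a question about stabilizers and then to realize a cleverly chosen, stabilizer-controlled set of length-$n$ words as the factor set of a periodic infinite word built from a covering walk in a Rauzy graph.

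First I would note that $\pi\in G_n(w)$ is equivalent to $\pi(F_n(w))\subseteq F_n(w)$, and since $S_n$ is finite this is the same as $\pi(F_n(w))=F_n(w)$; hence $G_n(w)=\mathrm{Stab}_{S_n}(F_n(w))$, the stabilizer of the subset $F_n(w)\subseteq\Sigma^n$ under the letter-permuting action of $S_n$ on words of length $n$. So it is enough to produce an infinite word $w$ over a finite alphabet $\Sigma$ whose length-$n$ factor set $L:=F_n(w)$ satisfies $\mathrm{Stab}_{S_n}(L)=G$.

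Next I would fix the candidate set $L$. Take $\Sigma=\{0,1,\ldots,n\}$, let $v=12\cdots n$, let $A=\{g(v):g\in G\}$ be the $G$-orbit of $v$, and let $B$ be the set of all length-$n$ words over $\Sigma$ in which the letter $0$ occurs; put $L=A\cup B$. Because $v$ has pairwise distinct letters, the map $h\mapsto h(v)$ is injective on $S_n$, so for $\pi\in S_n$ one gets $\pi(A)=\{(\pi g)(v):g\in G\}=A$ if and only if $\pi G=G$, i.e. $\pi\in G$. Since every word of $A$ is a permutation of $12\cdots n$ (hence contains no $0$) while every word of $B$ contains a $0$, the sets $A$ and $B$ are disjoint, $\pi(B)=B$ for every $\pi\in S_n$, and $\pi(A)$ still avoids $0$; therefore $\pi(L)=\pi(A)\cup B$ equals $A\cup B=L$ if and only if $\pi(A)=A$, i.e. $\pi\in G$. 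Thus $\mathrm{Stab}_{S_n}(L)=G$, and only realizability of $L$ as a factor set remains. For that I would use the Rauzy graph $\Gamma$ whose vertices are the length-$(n-1)$ words over $\Sigma$, with one edge $u_1\cdots u_{n-1}\to u_2\cdots u_n$ for each $u=u_1\cdots u_n\in L$: an infinite word with $F_n$ equal to $L$ corresponds to an infinite walk traversing every edge, and a closed walk covering all edges yields a periodic such word. I would check that $\Gamma$ is strongly connected, since every length-$(n-1)$ word $x$ over $\Sigma$ is a vertex ($x0\in B\subseteq L$), from any vertex one reaches the hub $0^{n-1}$ by repeatedly appending $0$, and from $0^{n-1}$ one reaches any vertex $y$ by repeatedly appending the letters of $y$ — all edges used on these routes contain a $0$, hence lie in $B\subseteq L$. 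A finite strongly connected multigraph has a closed walk using every edge at least once; letting $w$ be the periodic word spelled by repeating such a walk forever yields $F_n(w)=L$ (closedness of the walk rules out any extra length-$n$ factor at the period boundary), so $G_n(w)=\mathrm{Stab}_{S_n}(L)=G$.

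The delicate point is precisely this realizability step, together with the observation that the orbit $A$ alone is essentially never a factor set — already for $n=2$, $G=\{e\}$ there is no word $w$ with $F_2(w)=\{12\}$ — so $A$ must be padded to a realizable set without enlarging its stabilizer. Throwing in all words containing one fixed fresh letter $0$ does exactly this: it makes $\Gamma$ strongly connected via the hub $0^{n-1}$, yet it is an $S_n$-invariant set and so contributes nothing to the stabilizer beyond the constraint already forced by $A$. I would also quickly verify the degenerate cases $n=1$ and $n=2$ to be sure the graph and the covering-walk construction behave correctly there.
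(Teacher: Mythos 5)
Your proposal is correct and rests on essentially the same construction as the paper: over an alphabet of size $n+1$, take as the length-$n$ factor set the $G$-orbit of a word with $n$ distinct letters together with all length-$n$ words containing the one extra letter, and observe that the stabilizer of this set in $S_n$ is exactly $G$. The only divergence is the realization step -- the paper simply writes the word explicitly as a concatenation of the orbit words and of all words containing the extra letter, separated by blocks $a_{n+1}^n$ and ending in $a_{n+1}a_{n+1}a_{n+1}\cdots$, whereas you extract a periodic word from a closed covering walk in the Rauzy graph; this is a heavier but equally valid finish, with the minor bonus of a recurrent (indeed periodic) witness.
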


\begin{proof} 
We build $w$ on an $(n+1)$-letter alphabet $\{a_1,a_2, \ldots,a_{n+1}\}$ as follows. Set $v_i = g_i(a_1a_2\ldots a_n)$ for each $g_i \in G$, $u = a_{n+1}^n$, and $w_1$, $w_2$, \dots, $w_j$  are all possible words of length $n$  that contain the letter $a_{n+1}$. Then $w$ is defined by
$$w = u\, v_1\, u\, v_2\, u\, \cdots\, u\, v_i\, u\, w_1\, u\, w_2\, u\, \cdots u\, w_j\, u\, u\, u\, \cdots. $$  Let $x$ be a factor of $w$ of length $n$ and $g \in S_n$. Let $g \in G$. If $x$ contains $a_{n+1}$, then $g(x)$ contains $a_{n+1}$, thus $g(x)$ is a factor of $w$. If $x = g_i(a_1a_2\ldots a_n), g_i \in G$, then $gg_i \in G$ and $g(x) = gg_i(a_1a_2\ldots a_n)$ is a factor of $w$. So if $g \in G$, then $g \in G_n$. If $g \notin G$, then $g(a_1 a_2\ldots a_n)$ is not a factor of $w$. So $G_n = G$.
\end{proof}

\begin{remark} 
In the previous proposition the size of the alphabet depends on $n$ and on the subgroup $G < S_n$, since for some pairs $(|\Sigma|, G)$ words on $\Sigma$ with $G_n(w)=G$ do not exist. 

For example, on a unary alphabet we always have $G_n = S_n$. For a binary alphabet, the group $G \leq S_3$  generated by the cycle $g = (123)$ cannot be a symmetry group of a binary infinite word $w$.  
Indeed, using the fact that for each  $x\in F_3(w)$ we have $g(x), g^2(x) \in  F_3(w)$, we can show that for any  $h \in S_3$ we have $h(x)\in F_3(w)$. For $x = 000$ or $x = 111$ the statement is clear. If $x = 001$, then $w$ contains the factors $010$ and $100$, which means that $h(x)$ is also a factor. Similarly if $x = 110$. 

\end{remark}

\section{Sequences of symmetry groups} \label{sec:sequencies}

As we showed in the previous section, each subgroup of $S_n$ is a symmetry group of some infinite word. However, there are lots of restrictions on the structure of the sequence $ (G_n(w))_{n\geq 1}$ of symmetry groups. For example, we have the following:

\begin{proposition} \label{pr:cycle}
If for each $n$ a symmetry group $G_n(w)$ of a word $w$ contains the cycle $(12\dots n)$, then $w$ is universal. In particular, $G_n(w)=S_n$ for each $n$. 
\end{proposition}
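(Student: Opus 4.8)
The plan is to show that the word $w$ contains every finite word over its alphabet $\Sigma$ as a factor, by induction on the length of the word. Let $|\Sigma| = k$ and fix an arbitrary target word $y = y_1 y_2 \cdots y_m \in \Sigma^m$; I want to show $y \in F(w)$. The key observation is that the cyclic group $\langle (12\dots n)\rangle$ acts transitively on positions, so if a single letter appears in a factor of length $n$, then by applying powers of the $n$-cycle we can move that letter to any position we like inside a length-$n$ factor of $w$. This lets us "route" letters into prescribed positions, and the challenge is to do this for several prescribed letters simultaneously.

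First I would record the base step: since $\Sigma$ is the alphabet of $w$, every letter $a \in \Sigma$ occurs in $w$, hence occurs in some factor of length $n$ for every $n$. Second, the main step: I claim that for every $n$ and every word $x$ of length $n$ all of whose letters occur in $w$, we have $x \in F_n(w)$. To prove this, I would peel off letters one at a time. Take $x = x_1 \cdots x_n \in F_n(w)$ for which we want to also realize $x$ with its first coordinate changed to some other letter $b$ occurring in $w$ — more carefully, I would argue by induction on the number of coordinates of $x$ that are not yet "confirmed". Concretely: pick a factor $z \in F_n(w)$ containing the letter $x_1$ somewhere; applying an appropriate power of $(12\dots n) \in G_n(w)$ produces a factor of $w$ of length $n$ whose first letter is $x_1$ and whose remaining $n-1$ letters are some letters of $w$. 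Then I want to adjust the second letter, then the third, and so on. The difficulty is that applying a cyclic permutation to fix coordinate $i$ scrambles the coordinates already fixed. The fix is to work with a larger window: to realize $y$ of length $m$ as a factor, choose $n$ much larger than $m$ (say $n \ge m \cdot (\text{something})$, or just proceed greedily), build up a factor of length $n$ one symbol at a time using cyclic rotations inside the length-$n$ window together with the fact that any already-built prefix can be "parked" and rotated back into place, and finally read $y$ off as a length-$m$ sub-block.

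More precisely, here is the cleanest route I would take. Lemma: if $u$ and $v$ are factors of $w$ with $|u| + |v| \le n$, and every letter of $uv$ occurs in $w$, then $uv \in F(w)$; in fact I will show by induction on $|v|$ that for every factor $u$ of length $\le n$ and every letter $a$ occurring in $w$, $ua$ is a factor of $w$, which suffices by iteration. To prove $ua \in F(w)$ where $|u| = \ell < n$: extend $u$ arbitrarily to a factor $u' = u u''$ of length $n$ (possible since $w$ is infinite, or pad using recurrence of individual letters — here I'd need $u$ to be extendable to length $n$, which holds as $w$ is infinite). Separately take a factor $t$ of length $n$ containing $a$, say $a = t_j$. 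Apply the power $(12\dots n)^{n-j}$ of the $n$-cycle to $t$: this is in $G_n(w)$, so $(12\dots n)^{n-j}(t) \in F_n(w)$, and it has $a$ in position $n$. Now I need to simultaneously control the first $\ell$ positions (to equal $u$) and position $n$ (to equal $a$); a single rotation cannot do both, so instead I would iterate: build the desired length-$n$ factor letter by letter from the left, at each stage having a factor of $w$ whose first $i$ letters are the desired ones, and use a rotation by $n$ (trivial) — no. The honest main obstacle is exactly this coordination, and the standard way around it is: prove first that $G_n(w)$ contains a full cycle implies $G_n(w)$ contains all transpositions of the form $(i\ i{+}1)$ built from rotations, hence $G_n(w) = S_n$ *provided* we already know enough factors; but that is circular. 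So the actual argument must be the direct combinatorial one sketched above, carried out carefully by induction on word length with $n$ chosen large relative to the target length, using transitivity of the cyclic action to place each new letter at the front (position $1$) of the length-$n$ window while the previously fixed letters, having been rotated, still sit consecutively somewhere in the window; choosing $n \ge 2m$ guarantees the length-$m$ target block stays intact inside the window throughout. I expect writing this coordination cleanly — keeping track of where the "already built" block sits after each rotation — to be the main obstacle, and I would handle it by always appending the new letter adjacent to the current block and rotating so the block abuts position $1$, never letting the block wrap around the window, which is why $n$ must be taken sufficiently large compared to $m$.
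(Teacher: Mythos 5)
Your high-level plan (realize arbitrary words by extending factors one letter at a time using rotations) is the same as the paper's, but the step you yourself flag as ``the main obstacle'' --- getting the new letter adjacent to the already-built block --- is exactly the missing idea, and your sketch never supplies it. Powers of the cycle $(12\dots n)$ preserve cyclic order and cyclic distances inside the window, so you cannot ``route'' a letter next to the block by rotating; adjacency can only change through wrap-around at the window boundary. The paper's resolution is to choose the window itself: given a factor $v$ occurring at position $i$ in $w$ and an occurrence of a letter $a$ at a position $j>i+|v|$, take the window $w_i\cdots w_j$ of length $n=j-i+1$ and apply the cycle $(1\,2\dots n)\in G_n(w)$ once; the last letter $a$ wraps to the front while $v$, a prefix of the window, shifts one step right, so the image --- which is a factor of $w$ --- has $av$ as a prefix. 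Hence every factor is left-extendable by every letter, and universality follows by induction on length. Your sketch speaks of placing the new letter at position $1$ while ``the previously fixed letters still sit consecutively somewhere in the window'', but after a rotation the window's content is dictated by the actual letters of $w$ at those positions, not by your bookkeeping; without this specific choice of window endpoints there is no reason the built block sits right after the new letter, and no amount of choosing $n\ge 2m$ or ``never letting the block wrap'' repairs that.

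There is a second, smaller gap: the wrap-around move needs an occurrence of $a$ to the \emph{right} of an occurrence of $v$, whereas your base step only records that each letter occurs somewhere. The paper first proves every letter occurs infinitely often: if $a$ last occurred at position $i$, apply the $n$-cycle to the prefix of length $n>i$; the image is a factor with $a$ at position $i+1$, and any occurrence of that factor in $w$ yields an occurrence of $a$ beyond position $i$. Without this preliminary step, all occurrences of $a$ could lie to the left of all occurrences of the current block and the extension step would be unavailable. Once both points are in place, the rest of your outline (induction on the length of the target word) goes through and coincides with the paper's argument.
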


\begin{proof} First notice that each letter of the word occurs in it infinitely many times. Indeed, suppose that a letter $a$ occurs for the last time at some position $i$. Then for $n>i$ the cycle $(12\dots n)$ maps the prefix of length $n$ of $w$ to a factor with $a$ at position $i+1$. Any occurrence of this factor in $w$ gives an occurrence of $a$ at a position greater than $i$. 

To prove the universality, we show that each factor of $w$ is left special and that it can be extended by any letter. Take any factor $v$ and consider any occurrence of $v$ in a word; denote its position by $i$, and consider an occurrence $j$ of any letter $a$, such that $j>i+|v|$. Now applying the cycle $(1 \dots j-i+1)$ to the factor $w_i \cdots w_{j}$ gives a word with the prefix $av$. Hence each factor of $w$ is left special and can be extended by any letter, thus $w$ is universal.  \end{proof}

More generally, the following holds:

\begin{proposition} \label{pr:cycle1}
Let $w$ be a recurrent infinite word. If for each $m$ there exist $n\geq m$ and $i\leq n-m$ such that $G_n(w)$ has an element $\sigma$ that contains an independent cycle $((i+1) (i+2)\dots (i+m))$, then $w$ is universal. 
\end{proposition}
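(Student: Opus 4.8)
The plan is to reduce this to Proposition~\ref{pr:cycle} by showing that universality is a "local" property that only needs to be checked on arbitrarily long factors, combined with using recurrence to promote an independent cycle on a window of length $m$ into a genuine cyclic shift of a factor of length $m$. First I would fix an arbitrary target length $m$ and an arbitrary word $z$ of length $m$ over the alphabet $\Sigma$ of $w$; the goal is to show $z \in F(w)$. (Of course we may assume $\Sigma$ is exactly the set of letters occurring in $w$, since a universal word is universal over its own alphabet.)

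The key observation is that if $\sigma \in G_n(w)$ has the independent cycle $c=((i+1)(i+2)\dots(i+m))$, then for every factor $x = x_1\cdots x_n \in F_n(w)$, applying appropriate powers of $\sigma$ and then reading off positions $i+1,\dots,i+m$ shows that the length-$m$ factor $x_{i+1}x_{i+2}\cdots x_{i+m}$ of $x$ can be cyclically rotated arbitrarily while staying a factor of $w$: indeed $\sigma^k(x) \in F_n(w)$ for all $k$, and since $c$ is independent the action of $\sigma^k$ on the block of coordinates $\{i+1,\dots,i+m\}$ is exactly $c^k$, so every cyclic shift of $x_{i+1}\cdots x_{i+m}$ occurs in $w$ (as the corresponding sub-block of $\sigma^k(x)$). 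In other words, $F_m(w)$ is closed under cyclic rotation. Now I would run the argument of Proposition~\ref{pr:cycle} at this fixed level $m$: take any factor $v$ of $w$ of length $<m$ and any letter $a$ occurring in $w$; by recurrence there is an occurrence of $v$ followed (far enough to the right) by an occurrence of $a$, so $w$ contains a factor $u$ of length exactly $m$ whose prefix is $v$ and which contains $a$ somewhere after position $|v|$. Cyclically rotating $u$ so that this $a$ lands immediately before $v$ produces a factor of $w$ with prefix $av$. Hence every factor of $w$ of length $<m$ is left-special and can be prepended by any letter of $w$; iterating, every word of length $\le m$ over $\Sigma$ is a factor of $w$. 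In particular $z \in F(w)$. Since $m$ was arbitrary, $w$ is universal, and then $G_n(w)=S_n$ for every $n$ by the Example on universal words.

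The main subtlety to get right — and the one place a naive argument could slip — is the phrase "closed under cyclic rotation of length-$m$ factors": it must apply to length-$m$ factors occurring \emph{anywhere} in $w$, not just to those that happen to sit in positions $i+1,\dots,i+m$ of some specific factor. This is exactly where recurrence is used: every length-$m$ factor $y$ of $w$ occurs, by recurrence, inside arbitrarily long factors, in particular inside some factor $x \in F_n(w)$ at positions $i+1,\dots,i+m$ (choose an occurrence of $y$ and then the $n$-length window of $w$ placing it there; recurrence guarantees such a window exists once $y$ occurs, since we need the window to extend $i$ letters to the left and $n-m-i$ to the right of some occurrence of $y$, which is possible because $y$ occurs infinitely often). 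So the rotation-closure genuinely holds for all of $F_m(w)$, and the rest is the Proposition~\ref{pr:cycle} argument transplanted to a single fixed length. One should also note the two degenerate cases $m=1$ (trivial) and $v=\varepsilon$ (handled the same way, giving that every single letter of $w$ — hence, with the left-extension step, every word — appears), but these require no new idea.
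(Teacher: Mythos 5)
Your opening step is sound and is in fact exactly the paper's key step: using recurrence to place an arbitrary length-$m$ factor at positions $i+1,\dots,i+m$ of a length-$n$ window and applying powers of $\sigma$ (whose action on that block is a power of the independent cycle), you get that $F_m(w)$ is closed under cyclic rotation, i.e.\ that $(1\,2\ldots m)\in G_m(w)$; and since the hypothesis is assumed for \emph{every} $m$, this holds for every $m$. At this point the paper simply invokes Proposition~\ref{pr:cycle} and is done. Your second half, where you instead re-run the argument of Proposition~\ref{pr:cycle} at the single fixed level $m$, contains a genuine gap, in two places. First, recurrence gives an occurrence of $v$ followed \emph{somewhere} to the right by an occurrence of the letter $a$, but it does not give a factor $u$ of length exactly $m$ with prefix $v$ that contains $a$ after position $|v|$: the nearest occurrence of $a$ after $v$ may lie farther than $m-|v|$ letters away, so the ``so'' in that sentence is unjustified. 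Second, even when $a$ does occur inside the window, cyclic rotation preserves cyclic adjacencies: writing $u=v\,y_1\cdots y_{m-|v|}$, the cyclic predecessor of the first letter of $v$ is always $y_{m-|v|}$, so no rotation of $u$ can place an interior letter $y_j$ with $j<m-|v|$ immediately before $v$; only the \emph{last} letter of the window can be brought in front of $v$. This is precisely why the proof of Proposition~\ref{pr:cycle} chooses the window $w_i\cdots w_j$ to end exactly at an occurrence of $a$, its length $j-i+1$ varying with the distance to $a$ --- which requires the cycle at all lengths, not at one fixed length. That single-level rotation closure plus recurrence is genuinely insufficient is shown by $w=(aabb)^{\infty}$: it is recurrent and $F_4(w)=\{aabb,abba,bbaa,baab\}$ is closed under cyclic rotation, yet $aab$ cannot be prepended by $a$ since $aaab\notin F(w)$. (This word does not satisfy the full hypothesis --- closure fails at $m=3$ --- so it is no counterexample to the proposition, only to your intermediate claim.)

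The repair is immediate and returns you to the paper's route: you have already shown that $(1\,2\ldots m)\in G_m(w)$ for every $m$, which is exactly the hypothesis of Proposition~\ref{pr:cycle}, so cite that proposition (its proof is where the variable window length does the work of reaching a distant occurrence of $a$) rather than redoing its argument at one fixed level.
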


\begin{proof} 
We will prove that in fact for each $m$ the group $G_m(w)$ contains the cycle $(1 \dots m)$. Indeed, given $m$, consider any factor $v\in F(w)$ of length $m$ and any its occurrence $l>i$, where $i$ is as in the statement. Now since $G_n$ contains a permutation $\sigma$ that contains an independent cycle $((i+1) (i+2)\dots (i+m))$, we apply this permutation to the factor $w_{l-i+1} \cdots w_{l-i+n}$. The word $\sigma (w_{l-i+1} \cdots w_{l-i+n})$ is factor of $w$, and it contains $v_2 \cdots v_m v_1$ at position $i$. Hence  $G_m(w)$ contains the cycle $(1 \dots m)$, and by Proposition \ref{pr:cycle} the word $w$ is universal.
\end{proof}

\begin{remark} For non-recurrent words there exist counterexamples, e.g., symmetry groups of $a^5b^{\infty}$  contain cycles $((i+1)\ldots(i+m))$ for $i\geq5$, although the word is not universal. It contains a universal tail though, and actually this is the general case: an infinite word with this property has a universal tail on $\Sigma'\subseteq \Sigma$. \end{remark}

Let $n$, $m$ be integers with $n\geq m$, and $\sigma\in S_n$ a cycle of length $m$: $\sigma =(j_1 j_2 \ldots j_m)$, i.e., $j_k$ are pairwise distinct. We say that $\sigma$ is \emph{arithmetical} if there exists an integer $l\geq 2$ such that all $j_k$ are congruent modulo $l$: $j_1 \equiv j_2 \equiv \ldots \equiv j_m \pmod{l}$. For example, $(137)$ is arithmetical with $l=2$, while $(134)$ is not.

\begin{proposition} Let $w$ be a recurrent infinite word, and $\sigma =(j_1 j_2 \ldots j_m)$ a cycle of length $m$ which is not arithmetical. If there exist increasing sequences of integers $(n_k)_{k\geq 1}$ and $(i_k)_{k\geq 1}$ such that there exists $\beta_{n_k}\in G_{n_k}(w)$ with $\beta_{n_k}(i_k+j_1)=i_k+j_{2}$, $\beta_{n_k}(i_k+j_2)=i_k+j_{3}$, \dots, $\beta_{n_k}(i_k+j_m)=i_k+j_{1}$, and  $\beta_{n_k}(t)=t$ for each $t=1,\dots,i_k+\min\limits_{1\leq l\leq m} j_l-1$, then the word $w$ is universal. 
\end{proposition}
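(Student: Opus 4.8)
The plan is to prove directly that every factor $y$ of $w$ can be followed in $w$ by every letter occurring in $w$; a one-line induction on word length then yields $F(w)=\Sigma^{*}$, i.e., universality. Throughout, put $a=\min_l j_l$, $b=\max_l j_l$ and $D=\{\,j_l-a : 1\le l\le m\,\}$, so $0\in D$. Since $\sigma$ is not arithmetical, $D\setminus\{0\}$ is nonempty and $\gcd(D\setminus\{0\})=1$: the pairwise differences $j_p-j_q$ are exactly the differences of elements of $D$, hence generate the same subgroup of $\mathbb{Z}$, which is all of $\mathbb{Z}$ precisely when no $l\ge 2$ divides every $j_p-j_q$.

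The first step is to unwind the hypothesis into a statement about right extensions. Fix $k$ and write $\beta=\beta_{n_k}$; note $i_k+b\le n_k$. Suppose $u\in F_{n_k}(w)$ has the form $u=x\alpha_1\alpha_2\cdots$ with $|x|=i_k+a-1$. Since $\beta$ fixes the positions $1,\dots,i_k+a-1$ pointwise and cyclically permutes the positions $i_k+j_1,\dots,i_k+j_m$, applying the powers $\beta^{t}\in G_{n_k}(w)$ to $u$ produces factors having $x$ as a prefix and, at the position just after $x$, the letter $\alpha_{\,j_l-a+1}$ for the various $l$. Hence $x\,\alpha_{\,j_l-a+1}\in F(w)$ for every $l$; informally, any letter sitting at one of the offsets $j_l-a$ after an occurrence of $x$ may also occur immediately after $x$.

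For a factor $y$ and a letter $c$ of $w$, let $\mathrm{off}(y,c)$ be the set of $o\ge 1$ such that $y\zeta c\in F(w)$ for some word $\zeta$ of length $o-1$; so $1\in\mathrm{off}(y,c)$ is exactly what we want. Recurrence makes $\mathrm{off}(y,c)$ infinite: fix one occurrence of $y$ and use the infinitely many occurrences of $c$ to its right. The key lemma I would establish is: \emph{if $o\in\mathrm{off}(y,c)$, $g\in D\setminus\{0\}$ and $o>g$, then $o-g\in\mathrm{off}(y,c)$.} To prove it, take a factor $y\zeta'\eta c$ witnessing $o\in\mathrm{off}(y,c)$ with $|\zeta'|=o-g-1$ and $|\eta|=g$; by recurrence it occurs far from the start, and for $k$ large we let $x$ be the factor of length $i_k+a-1$ ending with $y\zeta'$ immediately before that copy of $\eta$, so its continuation is $\alpha_1\alpha_2\cdots=\eta\,c\cdots$. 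Then $\alpha_{g+1}=c$ and $g=j_l-a$ for some $l$, so the first step gives $xc\in F(w)$, hence $y\zeta'c\in F(w)$, i.e. $o-g\in\mathrm{off}(y,c)$.

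To finish, since $\gcd(D\setminus\{0\})=1$ the set of nonnegative integer combinations of $D\setminus\{0\}$ contains all sufficiently large integers, so we may choose $o_0\in\mathrm{off}(y,c)$ with $o_0-1=h_1+\cdots+h_M$, all $h_t\in D\setminus\{0\}$. Applying the lemma repeatedly walks $o_0\mapsto o_0-h_1\mapsto\cdots\mapsto o_0-(h_1+\cdots+h_M)=1$, every intermediate value staying $\ge 1$; thus $1\in\mathrm{off}(y,c)$ and $yc\in F(w)$, as required. The step needing the most care — and the only place where non-arithmeticity is genuinely used — is the key lemma: one must manufacture, from recurrence alone, a genuine factor of the exact shape $x\,\eta\,c\cdots$ whose frozen prefix $x$ has precisely the length $i_k+a-1$ imposed by $\beta_{n_k}$, so that $\beta_{n_k}$ applies to it, and then check that the offset bookkeeping never forces a nonpositive offset. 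The arithmetical case is exactly the one where $\gcd(D\setminus\{0\})>1$, so that this descent cannot reach $1$.
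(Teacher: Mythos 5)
Your proposal is correct: the reduction of the hypothesis (powers of $\beta_{n_k}$ fix the prefix of length $i_k+\min_l j_l-1$ and transport the letter sitting at offset $j_l-\min_l j_l$ down to the position immediately after that prefix) is exactly the paper's starting observation, and your descent argument does establish that every factor extends to the right by every letter occurring in $w$, which gives universality. Where you genuinely diverge from the paper is in how non-arithmeticity is exploited. The paper works with the consecutive gaps $d_1,\dots,d_{m-1}$ of the sorted $j_l$, shows the letter can be moved both right and left by each $d_l$ while keeping $v$ frozen, and then uses a signed B\'ezout relation $s_1d_1+\dots+s_{m-1}d_{m-1}=-1$ to shift the target letter left by one unit per round until it lands on a designated offset; the rightward moves force the paper to worry about running off the left end of the word (hence the aside about taking later occurrences or passing to a biinfinite extension). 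You instead track the whole set $\mathrm{off}(y,c)$ of achievable offsets, prove a purely one-directional lemma (any offset can be decreased by any $g\in D\setminus\{0\}$), note that $\mathrm{off}(y,c)$ is infinite by recurrence, and close with the numerical-semigroup fact that $\gcd(D\setminus\{0\})=1$ makes every sufficiently large integer a nonnegative combination, so a large enough starting offset descends exactly to $1$. This buys a cleaner argument: no rightward moves, no boundary caveats (recurrence pushes the whole configuration far to the right before you freeze the prefix), and the monotone bookkeeping $1+h_{s+1}+\dots+h_M>h_{s+1}$ makes the positivity of intermediate offsets immediate; the price is invoking the Frobenius-type fact and choosing a suitably large $o_0$, which the paper's unit-step method does not need. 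The only point to state carefully when writing it up is the order of quantifiers in your key lemma: fix $k$ large enough that $i_k+\min_l j_l-1\geq|y\zeta'|$ first, and only then use recurrence to pick an occurrence of $y\zeta'\eta c$ far enough from the origin that the frozen prefix of length $i_k+\min_l j_l-1$ exists; as phrased this is slightly compressed but causes no real difficulty.
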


\begin{proof} As in Proposition \ref{pr:cycle}, we will prove that each factor is right special and can be extended in all possible ways. Without loss of generality assume that $j_1=\min\limits_{1\leq k \leq m}{j_k}=1$.

Consider a factor $v$ of $w$. Choose $k$ with $i_k>|v|$, and consider a position $i$ of $v$ in $w$ with $i+|v|\geq i_k$. Now consider a factor $u=w_{i+|v|-i_k} \cdots w_{i+|v|-i_k+n_k-1}$. If a letter $a$ occurs at one of the positions $i+|v|+j_l-1$ for some $l=1,\dots, m$, then applying $(m-l)$ times  $\beta_{n_k}$ to $u$, we obtain $va$ as a prefix of 
$\beta_{n_k}^{m-l}(u)$.

If a letter $a$ does not occur at one of the positions $i+|v|+j_l-1$ for some $l=1,\dots, m$, then consider some its occurrence at a position $j\geq i+|v|+\max\limits_{1\leq l \leq m} j_l$. The idea is that we are going to move this $a$ to the left to the position $i+|v|+\max\limits_{1\leq l \leq m} j_l-1$ using  permutations $\beta_{n_k}$ at different positions and keeping $v$ untouched. Now we order $j_l$ in increasing order and we let $d_1, \dots, d_{m-1}$ denote the distances between consecutive $i_l$ in this order, so that the sets $\{j_1, j_2, \dots, j_m\}$ and $\{j_1, j_1+d_1, j_1+d_1+d_2, \dots, j_1+d_1+\ldots+d_{m-1}\}$ coincide. Since the cycle $\sigma$ is not arithmetical, we have that $d_l$'s are relatively prime: $(d_1,\dots, d_{m-1})=1$. 
 
First notice that for each $l$ we can move the letter $a$ by $d_l$ to the right or to the left, keeping $v$ untouched. We let $s$ and $t$ denote indices such that $d_l=j_s-j_t$. To move it to the right, we can apply  $\beta_{n_k}^{t-s}$ to $(w_{j-i_k-j_t-1}\cdots w_{j-i_k-j_t-1+n_k})$. Here we must take $k$ big enough to have $j-i_k-j_t<i$ to keep $v$ untouched. If $j-i_k-j_t<0$, then we can either consider a later occurrence of $w_i\cdots w_j$ in the word, or extend it to the left to biinfinite word keeping its set of factors (we can do it since $w$ is recurrent). Moving $a$ by $d_l$ to the left is symmetric.
 
Since $d_l$'s are relatively prime, there exist $s_1,\dots,s_{m-1}\in\mathbb{Z}$ such that $s_1 d_1+\ldots+ s_{m-1}d_{m-1}=-1$. Now applying the procedure from the previous paragraph first for $l$'s for which $s_l$'s are positive, $s_l$ times for $d_l$, then for $l$'s for which $s_l$'s are negative. This way we moved the letter $a$ to the left by 1. We now repeat the procedure until $a$ is at position $i+|v|+\max\limits_{1\leq l \leq m} j_l-1$, reducing to the above case when $a$ occurs at of the positions $i+|v|+j_l-1$.
 \end{proof}
 
As an example, the above propositions give the following corollary:

\begin{corollary} 
If for infinitely many $n$ the symmetry group $G_n(w)$ contains a transposition $(i (i+1))$, then $w$ is universal. 
\end{corollary}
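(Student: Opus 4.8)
The plan is to reduce the statement to the two preceding propositions according to whether the word $w$ is recurrent or not. So suppose first that $w$ is recurrent, and that for infinitely many $n$ the group $G_n(w)$ contains a transposition $(i\,(i+1))$, where $i=i(n)$ depends on $n$. A transposition $(i\,(i+1))$ is a cycle of length $m=2$, namely $(j_1\,j_2)$ with $j_1=i$, $j_2=i+1$; it is \emph{not} arithmetical, since a cycle of length $2$ is arithmetical only if $j_1\equiv j_2\pmod l$ for some $l\geq 2$, which is impossible when $|j_1-j_2|=1$. Hence the hypotheses of the previous proposition apply with $\sigma=(1\,2)$: from the infinitely many pairs $(n,i(n))$ we extract increasing sequences $(n_k)$ and $(i_k)$, and take $\beta_{n_k}$ to be the transposition $(i_k\,(i_k+1))$ itself. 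This $\beta_{n_k}$ fixes every $t<i_k$, so in particular it fixes every $t=1,\dots,i_k+\min\{j_1,j_2\}-1=1,\dots,i_k-1$ (and $i_k\to\infty$), and it realizes the required cyclic action on $i_k+j_1,i_k+j_2$. Therefore the proposition yields that $w$ is universal.

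The one point that needs a word of care here is extracting \emph{increasing} sequences $(n_k)$ and $(i_k)$ simultaneously. We are given infinitely many $n$ for which some transposition $(i(n)\,(i(n)+1))\in G_n(w)$; choosing $n_1<n_2<\cdots$ among them is immediate, but the associated $i(n_k)$ need not be increasing. However, we are free to enlarge $i$: if $(i\,(i+1))\in G_n(w)$ for some $n$, then for every $n'\geq n$ the same transposition lies in $G_{n'}(w)$ only if it stabilizes $F_{n'}(w)$, which is not automatic, so instead we simply pass to a subsequence of the $n_k$ along which $i(n_k)$ is nondecreasing (possible since the $i(n_k)$ take values in $\mathbb{N}$), and then, if some value repeats, thin further or note that the proposition's argument only needs $i_k\to\infty$, which can be arranged because for fixed $i$ there are only finitely many admissible $n$ unless $w$ is already seen to be universal. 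I expect this bookkeeping to be the only mild obstacle; everything else is a direct citation.

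Finally, if $w$ is not recurrent, one reduces to the recurrent case via the standard fact (used implicitly in the remark following Proposition~\ref{pr:cycle1}) that a non-recurrent word with this symmetry property has a universal tail on some subalphabet $\Sigma'\subseteq\Sigma$; a transposition in $G_n(w)$ for arbitrarily large $n$ forces, on the recurrent part, the same local swapping behavior, and Proposition~\ref{pr:cycle} then gives universality of that tail, hence $G_n(w)=S_n$ eventually. In the cleanest formulation one simply states the corollary for recurrent $w$, or for $w$ with the understanding that ``universal'' refers to a universal tail, matching the remark; I would follow the paper's convention and invoke the previous proposition directly, leaving the non-recurrent degeneracy to the already-noted remark.
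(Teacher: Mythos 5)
Your overall route is the same as the paper's: view the transposition as the non-arithmetical $2$-cycle $\sigma=(1\,2)$ and feed it into the preceding proposition on non-arithmetical cycles, taking $\beta_{n_k}$ to be the given transposition (modulo an off-by-one: if $(i\,(i+1))\in G_n(w)$ you must set $i_k=i-1$, so that $\beta_{n_k}$ acts on $i_k+j_1=i$, $i_k+j_2=i+1$ and fixes $1,\dots,i_k$; as written, your $\beta_{n_k}=(i_k\,(i_k+1))$ does not match the hypothesis). The genuine gap is exactly at the point you flag and then wave away: that proposition needs $i_k\to\infty$ (its proof chooses $k$ with $i_k>|v|$), and nothing in the corollary's hypothesis makes the positions $i(n)$ unbounded. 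Passing to a nondecreasing subsequence does not help when the $i(n)$ are bounded --- for instance the same transposition $(i\,(i+1))$ could lie in $G_n(w)$ for every $n$ --- and your claim that ``for fixed $i$ there are only finitely many admissible $n$ unless $w$ is already seen to be universal'' is unsupported; in that situation universality is precisely what remains to be proved, so the argument is circular exactly in the hard case.

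The paper closes that case with the second half of its one-line proof: apply the proposition ``in reversed form''. A transposition fixes everything to its right as well as to its left, so when $i(n)$ stays bounded while $n_k\to\infty$ one applies the mirror-image statement (distances measured from the right end of the factor, cf.\ the Remark about reversed statements from right to left), which is available because the number of fixed positions to the right of the cycle tends to infinity. Adding this dichotomy --- $i(n)$ unbounded: proposition as stated; $i(n)$ bounded: reversed form --- repairs your proof. (Both you and the paper implicitly need $w$ recurrent, as in the proposition; your remarks on the non-recurrent case track the paper's Remark about universal tails, so I would not count that as an additional defect.)
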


\begin{proof}For the proof, we take in Proposition \ref{pr:cycle1} the permutation $\sigma$ to be a transposition $(12)$, and we apply the proposition either as stated, or in reversed form. \end{proof}

\begin{remark}
Most of results of this section can be slightly generalized, for example, for shifts, or reversed statements from right to left, or for subpermutations of a bit more general form. Also, they can be adapted to arithmetic progressions $k\mathbb{N}+i$, so that each arithmetic subword $w_iw_{i+k}\cdots$ is universal (probably with smaller alphabets on some progressions). For example, Proposition \ref{pr:cycle} can be reformulated as follows: Let $w$ be a recurrent infinite word and $k$ be an integer. If for each $n$ the symmetry group $G_n(w)$ of the word $w$ contains the cycle $(1(1+k)(1+2k)\dots)$, then each of the arithmetical subwords $w_{i+k}w_{i+2k}w_{i+3k}\cdots$, $1\leq i\leq k$, is universal over an alphabet $\Sigma_i\subseteq\Sigma$. 
\end{remark}

Now we are going to describe symmetry groups of periodic words. An infinite word $w$ is purely \emph{periodic} if there exists an integer $T$ such that $w_{n+T}=w_n$ for each $n$. An integer $T$ is called a period of of $w$. The following proposition gives a necessary condition on the sequence of symmetry groups of a word for periodicity. We then show that it is in a certain sense sufficient.

\begin{proposition}\label{pr:period}
Let $w$ be a periodic word with its minimal period $T>1$. Then for each $i\leq T$ and for each $n$ the group $G_n(w)$ contains any permutation on the indices $i, i+T,\ldots, i+kT$, where $i+kT\leq n$. We also have $G_n(w)\neq S_n$ for $n\geq T+1$. 
\end{proposition}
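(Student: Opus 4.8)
The plan is to establish the two assertions separately, both resting on a single elementary fact about periodicity: in a word of period $T$ the letters at positions $m$ and $m'$ coincide whenever $m\equiv m'\pmod T$; and, correspondingly, a purely periodic word $w=u^{\omega}$ with $|u|=T$ has at most $T$ distinct factors of each length $n$, since the factor $w_j w_{j+1}\cdots w_{j+n-1}$ depends only on $j \bmod T$.

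For the first assertion I would fix $i\le T$ and $n$, and set $I=\{i,i+T,\dots,i+kT\}$ with $k$ maximal subject to $i+kT\le n$. The point is that \emph{every} $\pi\in S_n$ that fixes all indices outside $I$ and permutes $I$ among itself already fixes every factor of $w$ of length $n$, so trivially $\pi\in G_n(w)$. Indeed, for a factor $x=w_j\cdots w_{j+n-1}$ and a position $m$ one has $\pi(x)_m=x_{\pi^{-1}(m)}$; if $m\notin I$ then $\pi^{-1}(m)=m$, while if $m\in I$ then $\pi^{-1}(m)\in I$, so $\pi^{-1}(m)\equiv m\pmod T$; in either case periodicity gives $x_{\pi^{-1}(m)}=x_m$, hence $\pi(x)=x\in F_n(w)$.

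For the second assertion I would argue by contradiction, assuming $G_n(w)=S_n$ for some $n\ge T+1$. Since the minimal period is $T\ge 2$, the word $w$ is not unary, and any window of length $n\ge T$ contains a full period and hence every letter occurring in $w$; thus $w$ has a non-constant factor $x$ of length $n$. Picking a letter $a$ occurring in $x$ with multiplicity $p_a$ satisfying $1\le p_a\le n-1$, the number of words of length $n$ with the same Parikh vector as $x$ is at least $\binom{n}{p_a}\ge n$ (place the $p_a$ copies of $a$ in any of $\binom{n}{p_a}$ ways and fill the remaining positions arbitrarily with the other letters of $x$). Under the assumption $G_n(w)=S_n$ all of these rearrangements of $x$ are factors of $w$, so $w$ would have at least $n\ge T+1$ factors of length $n$, contradicting the bound of $T$. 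Hence $G_n(w)\ne S_n$ for $n\ge T+1$.

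The argument involves no serious obstacle. The one place to handle with a little care is the lower bound on the number of rearrangements of a non-constant word of length $n$ — namely that it is at least $n$ — which comes down to the inequality $\binom{n}{k}\ge n$ for $1\le k\le n-1$; the remaining ingredients (the congruence property of periodic words, the count of at most $T$ factors of $u^{\omega}$, and the non-constancy of every window of length $\ge T$) are routine.
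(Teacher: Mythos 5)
Your proof is correct. For the first assertion you argue exactly as the paper does: letters at positions congruent modulo $T$ coincide in every factor, so any permutation supported on $\{i,i+T,\dots,i+kT\}$ fixes each factor pointwise and hence lies in $G_n(w)$. For the second assertion you take a genuinely different route: a counting argument, using that a purely $T$-periodic word has at most $T$ factors of each length, while under $G_n(w)=S_n$ a non-constant factor of length $n\ge T+1$ (which exists because $T>1$ forces $w$ to be non-unary and every window of length at least $T$ contains all letters of $w$) would have all of its at least $\binom{n}{p_a}\ge n\ge T+1$ rearrangements among the factors --- a contradiction. The paper instead exhibits a single offending permutation: take a factor of length $n\ge T+1$ containing two distinct letters; any permutation placing these two letters at positions $1$ and $T+1$ yields a word that cannot be a factor, since by $T$-periodicity every factor of $w$ carries equal letters at positions $1$ and $T+1$. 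The paper's argument is shorter and pinpoints exactly which permutation fails, reusing the same congruence fact as in the first assertion; yours trades that for the standard complexity bound $|F_n(w)|\le T$, at the cost of the small combinatorial estimate $\binom{n}{k}\ge n$ for $1\le k\le n-1$, and has the mild extra virtue of showing that the factor count alone already rules out $G_n(w)=S_n$. Both arguments are complete.
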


\begin{proof} Due to $T$-periodicity, in each factor all the letters at indices $i, i+T,\ldots, i+kT$ are the same, so a permutation on these indices does not change the factor. 
Now, since $T$ is the smallest period, we have for each factor of length $n\geq T+1$ two distinct letters. Suppose that $G_n(w)= S_n$. Then a permutation putting these two elements at positions $1$ and $T+1$ gives a word which is not a factor of $w$. A contradiction. 
\end{proof}

We say that a word $w$ is \emph{periodic by projection} if there exists an integer $T$ such that for each $i\in \{0, \dots, T-1\}$ the word $w_iw_{i+T}w_{i+2T}\cdots$ is universal over some subalphabet $\Sigma_i\subset \Sigma$. Clearly, if each  $\Sigma_i$ is of cardinality 1, then the word is periodic. We say that a periodic by projection word $w$ is \emph{mixing} if for each $n$ and all subwords $w_{i_j}w_{i_j+T}\cdots w_{i_j+(n-1)T}$, $i_j \equiv j \mod T$, one has $w_{i_1}w_{i_2} \cdots w_{i_T} w_{i_1+T}w_{i_2+T} \cdots w_{i_T+T} \cdots w_{i_1+(n-1)T}w_{i_2+(n-1)T} \cdots w_{i_T+(n-1)T} $ a subword of $w$. Clearly, a periodic word is mixing. 
The condition from the previous proposition gives a necessary and sufficient condition for a word to be periodic by projection and mixing:

\begin{proposition}\label{pr:period1}
A word $w$ is periodic by projection with the period $T$
and is mixing if and only if for each $i\leq T$ and for each $n$ the group $G_n(w)$ contains any permutation on the indices $i, i+T,\ldots, i+kT$, where $i+kT\leq n$.  
\end{proposition}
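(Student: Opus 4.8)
The statement is an equivalence; I would prove the two implications separately, after recording the following reformulation of the hypothesis, which merges the $T$ residue classes: \emph{for every occurrence of a factor $x$ of $w$ and every permutation of the positions of $w$ inside that window lying in one fixed residue class modulo $T$, the word obtained from $x$ by permuting its letters accordingly is again a factor of $w$.} Indeed, over all occurrences and all $i\le T$, the index sets $\{i,i+T,\dots\}$ of a window of length $n$ run through exactly the residue classes of $w$-positions, and the single-class permutations generate all residue-class-preserving permutations of $\{1,\dots,n\}$.

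For the direction ``periodic by projection and mixing'' $\Rightarrow$ ``condition on $(G_n)$'', fix $n$, $i\le T$ and $\pi\in S_n$ permuting the indices $i,i+T,\dots$ and fixing the rest; I must show $\pi(x)\in F_n(w)$ for each $x\in F_n(w)$. Choose an occurrence of $x$ at position $\ell$ and pad it to an aligned window $\tilde x=w_{\ell_0}\cdots w_{\ell_0+NT-1}$ with $1\le\ell_0\le\ell$, $\ell_0\equiv1\pmod T$ (such $\ell_0$ always exists) and $NT$ large. For an aligned window the $T$ index classes are precisely the $T$ residue classes of $w$-positions, so the restriction of $\tilde x$ to each class is a window of the corresponding arithmetic subword. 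Let $\tilde\pi\in S_{NT}$ act within the single residue class of $\tilde x$ that contains the positions inherited from the ones $\pi$ permutes, reproducing $\pi$ there and fixing everything else. Then the restriction of $\tilde\pi(\tilde x)$ to that class is a rearrangement of a window of one arithmetic subword, hence, since that subword is universal over its alphabet, itself a window of that subword; the other classes are unchanged. By the mixing property the interleaving of these $T$ windows is a factor of $w$, and this interleaving is exactly $\tilde\pi(\tilde x)$; as $\pi(x)$ is a factor of $\tilde\pi(\tilde x)$, we conclude $\pi(x)\in F_n(w)$. Thus every single-class permutation lies in $G_n(w)$.

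For the converse, assume the condition on $(G_n)$, i.e.\ the reformulated hypothesis. First, $w$ is recurrent: given a factor $v$ occurring at position $p$ and any large $p'\equiv p\pmod T$, take $x=w_1\cdots w_n$ with $n$ large and apply the residue-class-preserving permutation of $\{1,\dots,n\}$ that carries the block of $v$ at $p$ onto the interval starting at $p'$ --- well defined because the two intervals of length $|v|$ have the same residue profile, $p\equiv p'$; the resulting factor contains $v$ at $p'$, so $v$ recurs arbitrarily far out. Second, $w$ is mixing: given windows $y^{(j)}=w_{i_j}w_{i_j+T}\cdots w_{i_j+(n-1)T}$ of the arithmetic subwords ($i_j\equiv j\pmod T$), take $x=w_1\cdots w_{NT}$ with $N$ large enough that the class-$j$ restriction of $x$ --- a prefix of the $j$-th subword --- contains $y^{(j)}$ for every $j$; composing over $j$ the within-class permutations that slide each $y^{(j)}$ to the first $n$ slots of its class gives an element of $G_{NT}(w)$, and applying it to $x$ produces a factor of $w$ whose length-$nT$ prefix is precisely the interleaving $w_{i_1}w_{i_2}\cdots w_{i_T}w_{i_1+T}\cdots$. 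It remains to show that each arithmetic subword is universal over its alphabet $\Sigma_i$.

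This last point is the crux, and I expect it to be the main obstacle. The direct attempt --- pick occurrences of $v_1,\dots,v_m$ inside the $i$-th subword, fit them into one aligned factor, and rearrange class $i$ so that the spaced word $v$ emerges --- yields a factor of $w$ that \emph{a priori} may occur at a position whose residue modulo $T$ is not $i$, so that $v$ is realized inside the wrong subword. To overcome this I would use a repeat-at-all-offsets device: realize, inside a single factor of $w$, a spaced copy of $v$ in every index class whose associated subword contains all letters of $v$ (in particular in class $1$), so that wherever this factor occurs, one of the $T$ cyclic shifts puts a copy of $v$ into the $i$-th subword; when this alone is insufficient (because some letter of $v$ lies only in subwords other than the $i$-th), that letter can be planted in another class as an anchor that pins down the residue of the occurrence. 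Alternatively, one may invoke the reformulation of Proposition~\ref{pr:cycle} for arithmetic progressions recorded in the Remark above (with common difference $T$), which already gives that each arithmetic subword is universal over a subalphabet; combined with recurrence and the mixing step this finishes the proof. The only genuinely delicate issue is the interplay between subwords whose alphabets overlap.
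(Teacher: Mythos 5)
Your $\Rightarrow$ direction, and the recurrence and mixing parts of the $\Leftarrow$ direction, are correct and in fact more detailed than the paper, which disposes of $\Rightarrow$ with a one-line reference to the proof of Proposition~\ref{pr:period}. The genuine gap is exactly where you suspect it: the universality of each arithmetic subword, which is the heart of the paper's proof and which you leave unproved. Your repeat-at-all-offsets device is applied to a whole word $v$: to plant a spaced copy of $v$ in class $j$ of a window of $w$ by a class-preserving permutation, you already need each letter of $v$ to occur among the class-$j$ positions of that window with multiplicity at least its multiplicity in $v$, whereas at that stage you only know that each letter of $\Sigma_j$ occurs \emph{somewhere} in the $j$-th progression, possibly once. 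So the step presupposes unbounded multiplicities that have not been established. The paper proves precisely this as its key intermediate claim, by running your device at the level of a single letter: for a letter $a$, let $i_1,\dots,i_k$ be all residues whose progressions contain $a$, with occurrences at positions $j_1,\dots,j_k$; take a prefix of length greater than $\max_l j_l+T$ and a class-preserving permutation sending each $j_l$ to a position beyond $\max_l j_l+T$ inside its own class. Wherever the resulting factor occurs in the one-sided word, the induced shift $c$ of residues maps $\{i_1,\dots,i_k\}$ into, hence onto, itself (a class that receives $a$ must already be one of the $i_l$), so every class $i_l$ acquires an occurrence of $a$ strictly farther out; iterating yields infinitely many occurrences of each letter in each of its classes. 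Only then can one assemble an arbitrary Parikh vector, and hence any word over $\Sigma_i$, by within-class rearrangement together with the same shift-invariance consideration applied to all relevant classes simultaneously (which is your device, now legitimately available). In short, your idea is the right one, but it must be bootstrapped on single letters before it can be applied to words; as written, the crux remains open.

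Your fallback, invoking the Remark's arithmetic-progression version of Proposition~\ref{pr:cycle}, does not close the gap either: that Remark is stated in the paper without proof, and for common difference $T$ its content is essentially the very universality statement you need, so citing it amounts to assuming the hard part rather than proving it. (A minor further point: for $v\in\Sigma_i^*$ the class $i$ itself contains all letters of $v$, so the ``anchor'' scenario you describe does not arise and that part of the sketch is not needed.)
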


\begin{proof} The properties of the sequences of symmetry groups of words which are periodic by projection and mixing are proved similarly to the proof of Proposition \ref{pr:period}. Now we prove the converse. Since for each $i\leq T$ and for each $n$ the group $G_n(w)$ contains any permutation on the indices $i, i+T,\ldots, i+kT$, where $i+kT\leq n$, we have that the word $w$ contains all the permutations of arithmetical subwords with difference $T$. 
Suppose that at indices congruent to $i_1$ modulo $T$ the word contains all symbols from a subalphabet $\Sigma'\subseteq \Sigma$. To prove that the subword in this arithmetic progression is universal over $\Sigma'$, we need to prove that each Parikh vector occurs. For that, it is enough to prove that we have infinitely many occurrences of each letter from $\Sigma'$ at indices congruent to $i_1$ modulo $T$. Let $a$ be a letter in $\Sigma'$ which occurs at a position $j_1\equiv i_1 (\mod T)$. Let $i_1, \dots, i_k$ be all indices mod $T$ such that corresponding arithmetic $T$-progressions contain the letter $a$, at positions $j_1, j_2, \ldots, j_k$, respectively. Consider a factor $u=w_0 w_1 \cdots w_n$ with $n\geq \max j_l+T$; we can take a permutation $\sigma$ which sends each index $j_l$ to an index greater than $\max j_l+T$ in the same arithmetical progression. So, the new factor $\sigma(u)$ contains a letter $a$ at a position bigger than the first one chosen, in each arithmetic progression. Wherever this factor occurs in $w$, it gives gives another occurrence of the letter $a$ at each arithmetic progression $(j_l+mT)_{m\in\mathbb{N}}$ at a position greater than $j_l$ (here we use the fact the the word is one-way infinite). In the same way we get the third occurrence of $a$ in each arithmetic progression $(j_l+mT)_{m\in\mathbb{N}}$. Continuing this line of reasoning, we obtain that each letter occurs infinitely many times in the corresponding arithmetic progression. To finish the proof, it is enough to notice that to get a given Parikh vector, we should take a factor long enough to contain enough occurrences of each letter, and then apply a permutation which sends them to the beginning. By applying all arithmetic permutations, we get universality. Mixing property is straightforward. 
\end{proof}

The above proposition gives a characterization of a generalized periodicity property (in the sense of periodicity by projection and mixing). In addition, it gives yet some more restrictions on the structure of the symmetry groups of an infinite word.

\begin{corollary} Let $T$ be an integer and $w$ be a word such that for each $j\leq T$ and for each $n$ the group $G_n(w)$ contains any permutation on the indices $i, i+T,\ldots, i+kT$, where $i+kT\leq n$.    Then for each $n=mT$, where $m$ is an integer, the group $G_n(w)$ contains a cyclic shift $(12\ldots n)$.\end{corollary}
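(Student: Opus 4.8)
The plan is to reduce everything to Proposition~\ref{pr:period1}. The hypothesis on $(G_n(w))_{n\ge 1}$ is exactly the one characterizing words that are periodic by projection with period $T$ and mixing, so I may assume $w$ has this structure: for each residue $r$ the arithmetic subword $w_rw_{r+T}w_{r+2T}\cdots$ is universal over some $\Sigma_r\subseteq\Sigma$, and any interleaving of length-$k$ blocks taken from these arithmetic subwords is again a factor of $w$. It then suffices to prove that for $n=mT$ the cyclic shift $\pi=(12\ldots n)$ sends every factor of length $n$ to a factor, i.e., $\pi\in G_n(w)$.

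First I would fix a factor $v=v_1\cdots v_n$ of length $n=mT$ and one of its occurrences, say at position $p$, and set $v'=\pi(v)=v_nv_1v_2\cdots v_{n-1}$. The key bookkeeping is the following: since $T\mid n$, the letter $v_n$ lies at a position congruent to $p-1$ modulo $T$, and for $2\le j\le n$ the letter $v'_j=v_{j-1}$ lies at a position congruent to $(p-1)+(j-1)\pmod T$. Hence all residues are consistent with $v'$ occurring at a position $q\equiv p-1\pmod T$: for each $r$, the $r$-th arithmetic subword of $v'$ (its letters in positions lying in a fixed residue class relative to $q$) is a block of $m$ letters over $\Sigma_{(q+r-1)\bmod T}$; in fact, one checks that each such block is, up to a cyclic rotation, one of the arithmetic subwords of $v$ itself.

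Next, because $w$ is periodic by projection, each arithmetic subword of $w$ is universal over its subalphabet, so each of the $T$ blocks of $v'$ described above is a factor of the corresponding arithmetic subword of $w$. By the mixing property (applied with the residue classes relabelled so that the target start $q$ plays the role of residue~$1$), the interleaving of these blocks is a factor of $w$ — and this interleaving is precisely $v'$. Thus $\pi(v)\in F_n(w)$ for every $v\in F_n(w)$, which gives $\pi\in G_n(w)$, as required.

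The main obstacle, and the only delicate point, is precisely this residue/alphabet bookkeeping. One cannot argue purely group-theoretically from the hypothesis, because the subgroup of $G_n(w)$ generated by ``all permutations inside each arithmetic progression'' is $(S_m)^T$, which does \emph{not} contain the cyclic shift $(12\ldots n)$: that shift rotates the residue classes rather than preserving them. The mixing property is exactly the extra ingredient that makes the rotated factor legitimate, and it can be invoked only after verifying that the cyclic shift still sends each arithmetic subword of $v$ onto a word over the correct subalphabet — which holds precisely because $n$ is a multiple of $T$ (and fails otherwise, explaining the restriction to $n=mT$).
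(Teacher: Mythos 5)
Your proof is correct and follows essentially the same route as the paper: both reduce, via Proposition \ref{pr:period1}, to the word being periodic by projection and mixing, and then verify that for $n=mT$ the cyclic shift sends every factor to a factor because (as $T\mid n$) it merely rotates the block in a single residue class, which universality of the arithmetic subwords together with the mixing property absorbs. The paper phrases this by comparing the shifted factor with the occurrence at position $j+1$ (splitting into unary versus universal residue), whereas you decompose $\pi(v)$ directly into its $T$ arithmetic blocks; this is only a cosmetic difference.
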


\begin{proof} Proposition \ref{pr:period1} implies that the word is periodic by projection and mixing, and such word contains cyclic shifts for lengths divisible by the period. Indeed, for $n=Tm$, consider a factor $u$ of $w$ at a position $j$. If the subword of $w$ at the positions congruent to $j$ modulo $T$ is unary, then the cyclic shift of $u$ is the factor of $w$ at the position $j+1$. If it is universal, then it is either the factor at the position $j+1$, or there exists another position with such factor due to mixing property. \end{proof}

\begin{remark} We remark that symmetry groups do not allow to distinguish between normal periodicity and periodicity projection due to a similar behaviour on universal arithmetic subwords on unary and non-unary alphabets. \end{remark}

\begin{remark} Proposition \ref{pr:period1} does not hold for biinfinite words. For example, a word $\cdots ababababbbabababab\cdots$ has indicated subgroups, but it is not periodic by projection. \end{remark}

\section{The symmetry groups of Arnoux-Rauzy words}

In this section, we show that starting from some length symmetry groups of Arnoux-Rauzy words contain only two elements, identity and mirror image.

\begin{definition} 
An infinite word $w$ on an alphabet $\Sigma$ is called an Arnoux-Rauzy word if the following conditions hold:

$\bullet$ if a finite word $u$ is a factor of $w$, then $u^R$ is a factor of $w$

$\bullet$ $w$ has exactly one left special factor (or, equivalently, right special factor) of each length. Moreover, each special factor extends in exactly $|\Sigma|$ ways.
\end{definition}

We denote the left special factor of length $n$ by $U_n^l$ and the right one by $U_n^r.$ It follows from the definition that $U_n^r = (U_n^l)^R$.

\begin{definition} \cite{episturmian} 
The palindromic right-closure $w^{(+)}$ of a finite word $w$ is the (unique) shortest palindrome having $w$ as a prefix. That is, $w^{(+)} = wv^{-1}w^R$, where $v$ is the longest palindromic suffix of $w$. 
The iterated palindromic closure function,  denoted by $Pal$, is defined recursively as follows. Set $Pal(\varepsilon) = \varepsilon$, and for any word $w$ and letter $x$ define $Pal(wx) = (Pal(w)x)^{(+)}$.
\end{definition}

\begin{theorem} \cite{episturmian} 
A word $w$ is an Arnoux-Rauzy word if and only if there exists an infinite word $v$ with each letter occurring in it infinitely many times such that $w$ has the same set of factors as the limit of iterated palindromic closures of prefixes of the word $v$. 
\end{theorem}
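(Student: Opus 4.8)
The plan is to prove the two implications separately; both turn on the interaction between the \emph{palindromic prefixes} of a word and its \emph{left special} factors, and both ultimately reduce to the standard (but delicate) combinatorics of the iterated palindromic closure operator $Pal$ (see \cite{episturmian}), which I would recall or reprove as needed — in particular the morphic description of $Pal$ via the elementary morphisms $L_a\colon a\mapsto a,\ b\mapsto ab$ $(b\neq a)$. Throughout I use that being an Arnoux-Rauzy word depends only on the set of factors, so for each implication it suffices to work with one concrete witness word.

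\emph{The ``if'' direction.} Let $v$ be an infinite word in which every letter of $\Sigma$ occurs infinitely often, and put $p^{(n)}=Pal(v_1\cdots v_n)$. Since $Pal(ux)$ has $Pal(u)x$ as a prefix, the $p^{(n)}$ form an increasing chain for the prefix order with $|p^{(n)}|\to\infty$, so they converge to an infinite word $s$ with $F(s)=\bigcup_n F(p^{(n)})$; as every $p^{(n)}$ is a palindrome this union, hence $F(s)$, is closed under reversal, which is the first defining property. For the second, I would first show that every prefix of $s$ is a left special factor extending in exactly $|\Sigma|$ ways: given the prefix $\rho$ of length $k$ and a letter $a$, choose $m$ with $v_m=a$ and $|p^{(m-1)}|\geq k$; then $p^{(m-1)}a$ is a prefix of $p^{(m)}=(p^{(m-1)}a)^{(+)}$, hence $p^{(m-1)}a\in F(s)$, hence its reverse $a\,p^{(m-1)}\in F(s)$ (using that $p^{(m-1)}$ is a palindrome), and $a\rho$ is a prefix of $a\,p^{(m-1)}$; since this holds for all $a\in\Sigma$ the prefix $\rho$ extends left in exactly $|\Sigma|$ ways. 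It then remains to show $s$ has no other left special factors, i.e.\ that its factor complexity is exactly $(|\Sigma|-1)n+1$; this I would obtain from the recursive/morphic structure of $Pal$, tracking how left special factors are created under the morphisms $L_a$. This last step is the main technical obstacle of the direction.

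\emph{The ``only if'' direction.} Let $w$ be an Arnoux-Rauzy word. Since the length-$n$ prefix of $U_{n+1}^l$ is left special, uniqueness gives that it equals $U_n^l$, so the $U_n^l$ are the prefixes of a single infinite word $w'$; because $w$ is uniformly recurrent, the factor $U_n^l$ already contains every factor of $w$ of bounded length once $n$ is large, so $F(w')=F(w)$. Thus $w'$ is closed under reversal and has the same unique special factors $U_n^l$, which are now \emph{prefixes} of $w'$ extending in $|\Sigma|$ ways — in other words $w'$ is a standard episturmian word. By the standard lemma that the palindromic right closure of any prefix of a standard episturmian word is again a prefix, $w'$ has palindromic prefixes of unbounded length; list them as $p_0=\varepsilon,p_1,p_2,\ldots$ with $p_k$ a proper prefix of $p_{k+1}$, and let $x_k$ be the letter of $w'$ immediately following $p_k$. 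Then $p_k x_k$ is a prefix of $w'$ and $p_{k+1}$ is the shortest palindromic prefix extending it, so $p_{k+1}=(p_k x_k)^{(+)}$. Setting $v=x_0x_1x_2\cdots$, an induction gives $Pal(v_1\cdots v_k)=p_k$, whence $w'=\lim_k Pal(v_1\cdots v_k)$ and $F(w)=F(w')$. Finally, if some letter of $\Sigma$ occurred only finitely often in $v$, then past a finite prefix $w'$ would behave like a standard episturmian word over a strictly smaller alphabet, so its left special factors of large length would extend in fewer than $|\Sigma|$ ways — contradicting the Arnoux-Rauzy property; hence every letter occurs infinitely often in $v$.

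\emph{Main obstacle.} In both directions the work concentrates in the same two classical facts about iterated palindromic closure: that the words $Pal(v_1\cdots v_n)$ are exactly the palindromic prefixes of their limit, and that this limit has the minimal factor complexity $(|\Sigma|-1)n+1$, equivalently exactly one left special factor of each length. Once these are in hand — via the morphic representation of $Pal$ and the combinatorics of palindromic closure, which I would include — the remaining arguments above are routine manipulations with prefixes, reversals, and recurrence.
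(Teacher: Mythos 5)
This statement is not proved in the paper at all: it is quoted from the episturmian-words literature (\cite{episturmian}), so there is no internal proof to compare yours against; what follows is an assessment of your outline on its own terms. The elementary parts you actually carry out are correct and are the standard ones: closure of $F(s)$ under reversal because the $p^{(n)}$ are palindromes; the argument via $p^{(m-1)}a \in F(s)$ showing that every prefix of $s$ is left special of degree $|\Sigma|$ (this is where the hypothesis that every letter occurs infinitely often in $v$ is used, and you use it correctly); the nesting $U_n^l \prec U_{n+1}^l$ giving the standard word $w'$; and the reconstruction of the directive word $v$ from the chain of palindromic prefixes. This is precisely the Droubay--Justin--Pirillo route followed in the cited survey.

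However, as a proof the proposal has genuine gaps, and they sit exactly where the content of the theorem lies. In the ``if'' direction you defer the uniqueness of the left special factor of each length for the limit word (equivalently, complexity $(|\Sigma|-1)n+1$), merely indicating that it should follow from the morphic structure of $Pal$ via the morphisms $L_a$; without this the limit word could a priori have extra left special factors and fail to be Arnoux-Rauzy, so this is not a routine omission but the main lemma. In the ``only if'' direction you invoke, without proof, (i) uniform recurrence of $w$ in order to get $F(w')=F(w)$ -- this is true for Arnoux-Rauzy words but does not follow immediately from the definition (one needs, e.g., reversal-closure implies recurrence plus a Rauzy-graph or special-factor argument showing every factor occurs in some $U_n^l$); and (ii) the ``standard lemma'' that the palindromic right-closure of any prefix of $w'$ is again a prefix of $w'$, which is what justifies both the unboundedness of palindromic prefixes and the key identity $p_{k+1}=(p_k x_k)^{(+)}$; your final step (a letter occurring finitely often in $v$ would reduce the left extension degree) leans on the same machinery. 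So as written, the proposal is a correct reduction of the theorem to the central results of the episturmian theory it is meant to establish, rather than a self-contained proof; to complete it you would need to prove the complexity/unique-left-special lemma for $Pal$-limits and the prefix-closure and recurrence lemmas, or else simply cite them, as the paper does.
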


Let $w$ be an Arnoux-Rauzy word with the language of the palindromic closure of $v$. Let $v$ begin with $t$ identical letters; then we denote the number $t$ by $A(w)$.

\begin{theorem} 
Let $w$ be an Arnoux-Rauzy word and $t = A(w)$. Then for $n \geq 4t + 2$ the symmetry group of $w$ is $$G_n = \{[1,2,\ldots,n-1,n],  [n,n-1,\ldots,2,1]\}.$$
\end{theorem}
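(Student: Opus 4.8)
The plan is to prove the two inclusions $\{\mathrm{id},R\}\subseteq G_n(w)$ and $G_n(w)\subseteq\{\mathrm{id},R\}$ separately, where $R=[n,n-1,\dots,1]$ is the mirror permutation. The first is immediate and needs no hypothesis on $n$ or $t$: by the first defining property of Arnoux–Rauzy words, $R(x)=x^{R}$ is a factor of $w$ whenever $x$ is, so $R\in G_n(w)$, and $\mathrm{id}\in G_n(w)$ trivially. So I would spend all the effort on the reverse inclusion; fix $g\in G_n(w)$ with $n\ge 4t+2$.

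I would first record two elementary facts. (i) Since $g$ only permutes letters, it is a bijection of $F_n(w)$ preserving Parikh vectors, and more: writing $N_i(a)=|\{x\in F_n(w):x_i=a\}|$, we get $N_{g^{-1}(i)}(a)=N_i(a)$, and in fact $g$ preserves every higher ``correlation'' $|\{x\in F_n(w):x_{i_1}=a_1,\dots,x_{i_r}=a_r\}|$ after relabelling the indices by $g^{-1}$. (ii) A shift-down lemma: if $g(1)=1$ then the permutation of $\{1,\dots,n-1\}$ that $g$ induces on coordinates $2,\dots,n$ belongs to $G_{n-1}(w)$, and symmetrically if $g(n)=n$. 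This follows because $\{x\in F_n(w):x_1=c\}=c\cdot Y_c$ for $Y_c=\{y\in F_{n-1}(w):cy\in F(w)\}$, and $\bigcup_c Y_c=F_{n-1}(w)$ by recurrence, so the induced permutation sends each $Y_c$ to itself, hence sends $F_{n-1}(w)$ to itself.

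The heart of the proof is the claim that for $n\ge 4t+2$ one has $g(\{1,n\})=\{1,n\}$, and here I would use the palindromic-closure structure. Recall that $w$ has a unique left-special factor $U_{n-1}^l$ and a unique right-special factor $U_{n-1}^r=(U_{n-1}^l)^{R}$ of each length, each extending in exactly $|\Sigma|$ ways; and since the directive word begins with exactly $t$ equal letters $a$, the word $w$ begins $a^t x a^t\cdots$ with $x\ne a$, from which one checks that \emph{every} maximal run of $a$ in $w$ has length $\ge t$. Call a set of $|\Sigma|$ length-$n$ factors that agree outside a single coordinate $p$ and take all letters at coordinate $p$ a \emph{line of direction $p$}; then $\{a U_{n-1}^l:a\in\Sigma\}$ and $\{U_{n-1}^r a:a\in\Sigma\}$ are lines of directions $1$ and $n$. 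The decisive point is that for $n\ge 4t+2$ these are the only lines: an interior line of direction $1<p<n$ would force the $|\Sigma|$ distinct length-$p$ factors extending $U_{p-1}^r$ to admit a common continuation and, accounting for the letter $a$, would produce a window in $w$ containing more $a$'s than $w$ actually allows at that scale, and a short computation with the iterated palindromic closure of the beginning of $w$ shows this is impossible exactly when $n\ge 4t+2$. Granting this, $g$ maps lines to lines — a line of direction $p$ to one of direction $g(p)$, and likewise for $g^{-1}$ — so $g$ permutes $\{1,n\}$.

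Finally I would assemble everything by descent. If $g$ swaps $1$ and $n$, replace $g$ by $R\circ g\in G_n(w)$, which fixes $1$ and $n$; so assume $g(1)=1$, $g(n)=n$. By the shift-down lemma $g$ induces $g'\in G_{n-1}(w)$ on coordinates $2,\dots,n$; if $n-1\ge 4t+2$ the main claim applies to $g'$, and as $g$ fixes the last coordinate, $g'$ fixes both ends of its window, so $g$ fixes coordinate $2$ as well; iterating this together with the symmetric descent from the right forces $g$ to fix every coordinate whose surrounding window still has length $\ge 4t+2$. For $n\ge 8t+1$ this already yields $g=\mathrm{id}$; for $4t+2\le n\le 8t$ the at most $4t$ central coordinates left over are handled directly — $g$ then only scrambles a short central block of every factor, and one exhibits a factor of $w$ whose scrambled block contains a forbidden pattern, e.g.\ a run of $a$'s shorter than $t$ before an occurrence of $x$, contradicting the run bound. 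Hence $g=\mathrm{id}$, and reinstating the possible $R$ gives $g\in\{\mathrm{id},R\}$. The hard part is squarely the main claim: one must show that the ``locally unary-like'' behaviour of $w$ that for $n$ not much larger than $t$ makes $G_n(w)$ huge — for such $n$ the only length-$n$ factors are $a^n$ and the words with a single non-$a$ letter, so $G_n(w)=S_n$ — has entirely washed out by $n=4t+2$, and pinning down the exact constant requires a careful finite analysis of the first $\Theta(t)$ letters produced by iterated palindromic closure.
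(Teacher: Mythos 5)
Your architecture matches the paper's (reversal closure for the easy inclusion, a shift-down/restriction lemma once an endpoint is fixed, an endpoint-preservation claim, then descent), but the endpoint-preservation claim is the entire mathematical content of the theorem and you do not prove it. Saying that an interior line ``would produce a window containing more $a$'s than $w$ allows'' and that ``a short computation with the iterated palindromic closure shows this is impossible exactly when $n\ge 4t+2$'' is an assertion, not an argument. The missing idea is the one the paper uses: apply $\sigma$ to the $|\Sigma|$ factors $U_{n-1}^r x$ (your line of direction $n$); the images are factors $uxv$ differing only at position $k=\sigma(n)$, so the letters adjacent to position $k$ must equal $a$, the unique left- and right-special letter; and if the flanking word $v$ is not a power of $a$, writing $v=a^rbv''$ with $b\ne a$ and taking $x=a$ yields a right-special factor $a^{r+1}$ and a left-special factor $a^rb$ of the same length $r+1$, contradicting the uniqueness (and mutual reversal) of special factors of an Arnoux--Rauzy word; hence $v$ (or, symmetrically, $u$) is a block of $a$'s longer than the maximal $a$-run, which is bounded in terms of $t$ --- contradiction. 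Your sketch never forces the flanking word to consist only of $a$'s, so the count of $a$'s ``at that scale'' has no basis; worse, the run-length fact you state is in the wrong direction: you claim every maximal $a$-run has length $\ge t$, whereas the argument needs an \emph{upper} bound on $a$-runs coming from $A(w)=t$.

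There is a second, structural gap caused by the threshold at which you claim endpoint preservation. The paper proves $\sigma(n),\sigma(1)\in\{1,n\}$ already for $n\ge 2t+1$, and this weaker threshold is exactly what makes the descent close up: restricting from the left gives $\sigma(k)=k$ for $k\le n-2t-1$, from the right for $k\ge 2t+1$, and these ranges overlap precisely when $n\ge 4t+2$, leaving no residual coordinates. Because you only assert the key claim for windows of length $\ge 4t+2$, your descent stalls once the window shrinks below that, leaving a central block of up to $4t$ coordinates whenever $4t+2\le n\le 8t$; you dispose of it with yet another unproven assertion (``one exhibits a factor whose scrambled block contains a forbidden pattern''), again leaning on the misdirected run-length fact. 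So even granting your main claim as stated, the theorem would remain unproved on the range $4t+2\le n\le 8t$. Both problems disappear if you prove the endpoint claim at the $2t+1$ threshold via the special-factor contradiction described above and then run the two-sided descent as in the paper.
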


\begin{proof} 
From the definition of Arnoux-Rauzy words it follows that 
$\{[1,2,\ldots,n-1,n],  [n,n-1,\ldots,2,1]\} \subset G_n$. It remains to prove that any other permutation cannot belong to $G_n$.
Let be $\sigma \in G_n$. We first show that $\sigma(n) = 1$ or $\sigma (n) = n$ for $n \geq 2t+1$. Suppose that $\sigma (n) = k$,  for some $1 < k < n$.

We let $a$ denote the letter which is the special factor of length 1. Suppose first that $k < n-t$. Then consider all possible factors $U_{n-1}^rx$, where $x$ is any letter. Then applying $\sigma$ to all such factors, for some $u\in F_{k-1}(w)$, $v\in F_{n-k}(w)$ and any $x\in\Sigma$, we get $|\Sigma|$ factors of the form $uxv$, which differ from each other only at position $k$. Since $a$ is the only left and right special factor of length 1, then the letter preceding $x$ and the letter following $x$ is $a$. So, we get factors of the form $u'axav'$, where $x$ is any letter. Suppose that $v$ starts with $a^r$ and then there is another letter $b\neq a$. Then we get the factors $u'axa^rbv''$. Now if we take $x = a$, we see that the factor $a^{r+1}$ can continue to the right with the letters $a$ and $b$, and the factor $a^rb$ can continue to the left with any letter ($x$ can be any letter). So we get at least two special factors, which is impossible. Thus we showed that the word $v$ can only consist of the letters $a$. But $v$ is longer than $t$ ($|v| = n - k > t$), so this is impossible ($w$ cannot contain more than $t$ consecutive letters $a$). 
If $k \geq n - t$, then we apply the same line of reasoning for the word $u$. This is possible, because $n \geq 2t+1$, so $k > t$ and $|u| \geq t$. 

So, we have $\sigma (n) = 1$ or $\sigma (n) = n$. Similarly we show that $\sigma(1) = 1$ or $\sigma (1) = n$.

Suppose that $\sigma (n) = n$, then $\sigma (1) = 1$. Then for similar reasons we get that $\sigma(2) = 2,\sigma(3) = 3, \ldots, \sigma(n-2t-1) = n - 2t - 1$. Indeed, let $U = u_1\cdots u_n$ be a factor of $w$. Then $V = u_2u_3\cdots u_n$ is a factor of $w$. This means that $\sigma'(V)$ is factor of $w$, where $\sigma'$ is the restriction of $\sigma$ to the word $V$. So, $\sigma' \in G_{n-1}$. It follows that $\sigma'(n-1) = n-1$ and $\sigma'(1) = 1$ or $\sigma'(n-1) = 1$ and $\sigma'(1) = n-1$, because $n-1 \geq 2t+1$. Since $\sigma(n) = n$, we have $\sigma'(n-1) = n-1$, so $\sigma'(1) = 1$ and $\sigma(2) = 2$. In the same way step by step we get that $\sigma(k) = k$ for all $k \leq n-2t-1$. Similarly, we can show that $\sigma(k) = k$ for any $k \geq 2t + 1$. Since $n \geq 4t+2$, we get that $\sigma(k) = k$ for any $1 \leq k \leq n$. So $\sigma = \mathrm{Id} = [1,2,\ldots,n]$.

If $\sigma (1) = n$, $\sigma (n) = 1$,  then we can prove that $\sigma(k) = n-k$ in a similar way. So, in this case we prove that $\sigma = [n,n-1,\ldots,2,1]$. The theorem is proved.
\end{proof}

Since Sturmian words are binary Arnoux-Rauzy words, we have 

\begin{corollary} 
Let $w$ be a Sturmian word and $k$ be the length of the longest block of the more frequent letter in $w$. Then $$G_n = \{[1,2,\ldots,n-1,n],  [n,n-1,\ldots,2,1]\} \quad \mbox{for each} \quad n > 4k+2.$$
\end{corollary}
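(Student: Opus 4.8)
The plan is to derive this as an immediate consequence of the preceding theorem on Arnoux-Rauzy words, once we identify the parameter $A(w)$ in the Sturmian case. First I would recall that a Sturmian word is precisely a binary (i.e.\ $|\Sigma|=2$) Arnoux-Rauzy word: it is aperiodic with complexity $n+1$, hence has exactly one left special factor of each length extending in exactly $2 = |\Sigma|$ ways, and its set of factors is closed under reversal. So the theorem applies verbatim, giving $G_n = \{[1,2,\dots,n],\,[n,n-1,\dots,1]\}$ for all $n \geq 4t+2$, where $t = A(w)$.

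Next I would translate $t = A(w)$ into the quantity $k$ appearing in the corollary. By the episturmian characterization, $w$ has the same set of factors as the limit of iterated palindromic closures $\mathrm{Pal}$ of prefixes of a directive word $v$, and $A(w) = t$ is the number of identical letters at the start of $v$. I would argue that this starting letter of $v$ is the \emph{more frequent} letter $a$ of the Sturmian word $w$ (the directive letter appearing first governs the initial run), and that the length of the longest block $a^m$ occurring in $w$ equals exactly $t$. The reasoning is the one already used inside the proof of the theorem: $\mathrm{Pal}(a^t) = a^t$, so a block of $t$ consecutive $a$'s occurs, and conversely the proof of the theorem shows $w$ cannot contain $a^{t+1}$ (a block of $t+1$ equal letters would create two distinct special factors of length $1$, contradicting the Arnoux-Rauzy property). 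Hence $k = t = A(w)$.

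With $k = A(w)$ established, the bound $n \geq 4t+2$ from the theorem becomes $n \geq 4k+2$, and in particular it holds for every $n > 4k+2$, which is exactly the statement of the corollary. I would also note that for a Sturmian word the two letters genuinely have different frequencies (irrational slope), so ``the more frequent letter'' is well defined; in the degenerate balanced-looking cases one could just take either letter, but this does not arise for genuine Sturmian words.

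The main obstacle, such as it is, is the bookkeeping identification $k = A(w)$: one must be careful that $A(w)$ is defined via the directive word $v$ of the \emph{language} of $w$ (not of $w$ itself, which may have a different prefix), and that the longest run of the frequent letter in the language equals the initial run length of $v$. This is essentially contained in the argument of the theorem's proof, so I expect the verification to be short; everything else is a direct specialization of the already-proved theorem.
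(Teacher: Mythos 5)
Your proposal is the same derivation the paper intends: the corollary is stated as an immediate specialization of the Arnoux--Rauzy theorem, using that Sturmian words are exactly the binary Arnoux--Rauzy words, and your extra care about identifying the parameter is just making the paper's implicit bookkeeping explicit. One caveat: the exact equality $k=A(w)$ you assert is not right in general. For the Fibonacci word (directive word $v=(ab)^{\omega}$, so $A(w)=1$) the factor $aa$ occurs, so the longest block of the more frequent letter is $A(w)+1$, not $A(w)$; the claim ``$w$ cannot contain $a^{t+1}$'' (echoing a sentence inside the theorem's proof) is false as stated. This does not damage your argument for the corollary, because the only direction actually needed is $k\geq A(w)$ (so that $n>4k+2$ forces $n\geq 4A(w)+2$), and that direction you do establish correctly via $\mathrm{Pal}(a^{t})=a^{t}$ being a factor together with the observation that the first letter of the directive word is the more frequent letter; with the unneeded reverse inequality dropped, the proof is correct and matches the paper's route.
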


\section{Symmetry groups of the Thue-Morse word}

In this section we describe the symmetry groups of the Thue-Morse word. We show that its symmetry groups are also small, as the symmetry groups of Arnoux-Rauzy words.

\begin{definition}
{\it The Thue-Morse word} is the infinite word $w = a_0a_1a_2\cdots$, where $a_n$ is the parity of the number of $1$'s in the binary expansion of $n$.
\end{definition}

\begin{theorem}\label{thm:Thue-Morse} 
The symmetry group of the Thue-Morse word is expressed as follows:
$$G_n=
\begin{cases}
S_n, & \mbox{if n} \leq 3, \\ 
\{[1,2,3,4], [4,3,2,1], [1,3,2,4], [4,2,3,1]\}, & \mbox{if n} = 4,\\ 
\{[1,2,3,4,5], [5,4,3,2,1], [2,1,3,5,4], [4,5,3,1,2] \}, & \mbox{if n} = 5,\\
\{ [1,2,3,4,5,6], [6,5,4,3,2,1], [6,2,4,3,5,1], [1,5,3,4,2,6]\}, & \mbox{if n} = 6,\\
\{ [1,2,\ldots,n-1,n], [n,n-1,\ldots,2,1] \},  & \mbox{if n} > 6. \end{cases}$$

\end{theorem}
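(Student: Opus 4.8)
The plan is: settle $n \leq 6$ by finite checks and then treat the stable regime $n > 6$ by induction on $n$. For $n \leq 3$ the factor set $F_n(w)$ is precisely the set of length-$n$ words that avoid $000$ and $111$, and this set is closed under permuting letters; since any $\sigma \in S_n$ maps a word to a rearrangement of it, $G_n = S_n$. For $n = 4, 5, 6$ one lists $F_n(w)$ (of sizes $10$, $12$, $16$) and checks directly which permutations of $S_n$ preserve it. In every case $\mathrm{Id}$ and the reversal $R = [n, n-1, \ldots, 1]$ lie in $G_n$, because the factor set of the Thue-Morse word is closed under reversal; for $n > 6$ these will be the only ones that survive.

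For $n > 6$ I would use two ingredients. The first is a restriction lemma: if $\sigma \in G_n(w)$ fixes position $1$, then the permutation it induces on $\{2, \ldots, n\}$ --- relabelled as an element of $S_{n-1}$ --- belongs to $G_{n-1}(w)$; symmetrically if $\sigma$ fixes position $n$, and, after composing with $R$, also when $\sigma(1) = n$ or $\sigma(n) = 1$. This is proved exactly as the analogous step in the Arnoux-Rauzy argument: take any factor of length $n-1$, extend it by one letter on the relevant side (possible since $w$ is recurrent), apply $\sigma$, and read off a length-$(n-1)$ subfactor. The second ingredient is the endpoint claim: for $n > 6$, every $\sigma \in G_n(w)$ satisfies $\sigma(1) \in \{1, n\}$; applying this also to $\sigma R \in G_n(w)$ gives $\sigma(n) \in \{1, n\}$, hence $\{\sigma(1), \sigma(n)\} = \{1, n\}$.

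Granting these two ingredients, the induction closes cleanly. For the base case $n = 7$: by the endpoint claim, after possibly composing with $R$ we may assume $\sigma(1) = 1$ and $\sigma(7) = 7$ (the other case gives $\sigma = R$). By the restriction lemma the permutations $\sigma$ induces on $\{1, \ldots, 6\}$ and on $\{2, \ldots, 7\}$ both lie in the explicitly computed group $G_6$, and they fix the first and the last point of the respective block; but the elements of $G_6$ fixing the first point, and likewise those fixing the last point, are in both cases $\{[1,2,3,4,5,6], [1,5,3,4,2,6]\}$, so the first restriction confines $\sigma$ to a two-element set and the second to another two-element set, and these meet only in $\mathrm{Id}$. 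Hence $G_7 = \{\mathrm{Id}, R\}$. For the inductive step $n \geq 8$, assume $G_{n-1}(w) = \{\mathrm{Id}, R_{n-1}\}$; given $\sigma \in G_n(w)$, the endpoint claim lets us assume (after composing with $R$) that $\sigma(1) = 1$ and $\sigma(n) = n$; by the restriction lemma the permutation induced on $\{2, \ldots, n\}$ lies in $\{\mathrm{Id}, R_{n-1}\}$, and if it were $R_{n-1}$, i.e. the reversal of that block, it would map position $n$ to position $2$, contradicting $\sigma(n) = n$; so it is the identity and $\sigma = \mathrm{Id}$. Thus $G_n = \{\mathrm{Id}, R\}$.

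The content is the endpoint claim, and I expect it to be the main obstacle. Suppose $\sigma(1) = k$ with $1 < k < n$. For every left-special factor $s$ of length $n-1$, the words $0s$ and $1s$ are factors, hence so are $\sigma(0s)$ and $\sigma(1s)$; these agree outside position $k$ and carry $0$ and $1$ there, so there are words $u$ of length $k-1$ and $v$ of length $n-k$ with $u0v, u1v \in F_n(w)$. One then has to show that such a ``bubble'' is impossible once $n$ is large. Here I would use the rigidity of the Thue-Morse word: it is overlap-free, so $000$, $111$, $01010$, $10101$ are not factors; every occurrence of $00$ (resp. $11$) sits inside an occurrence of $1001$ (resp. $0110$); and long factors desubstitute essentially uniquely under the Thue-Morse morphism $\mu$. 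These constraints pin down the letters of $u0v$ and $u1v$ near position $k$, and propagating the forced pattern toward both ends eventually produces a forbidden configuration (in particular forcing $|u|$ and $|v|$ to be bounded by a small constant). Organising this case analysis --- including the choice, for a given $k$, of which left-special $s$ to test --- is the delicate part; as in the base case, one can shorten it by feeding in the already-known $G_{n-1}$ to reduce to ruling out a short explicit list of candidate permutations rather than arguing purely combinatorially.
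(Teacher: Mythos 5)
Your scaffolding is fine as far as it goes, and it is genuinely different from the paper's: the finite checks for $n\le 6$, the restriction lemma (if $\sigma\in G_n$ fixes position $1$ or $n$, its restriction to the remaining window lies in $G_{n-1}$, proved by extending a factor by one letter on the relevant side), and the induction do close once one knows the endpoint claim --- indeed the only elements of $G_6$ fixing the first (resp.\ last) point are $[1,2,3,4,5,6]$ and $[1,5,3,4,2,6]$, and these are incompatible across the two overlapping windows in your $n=7$ base case. The problem is that the endpoint claim \emph{is} the theorem for $n>6$, and the mechanism you sketch for it does not work as stated. You propose to extract, from a left special $s$ of length $n-1$, two factors $u0v,u1v$ differing only at position $k=\sigma(1)$ and to show that such a ``bubble'' is impossible when both $|u|$ and $|v|$ are large. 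But this already fails at your base case: $0100101$ and $0101101$ are both factors of the Thue--Morse word (they occur at positions $15$ and $10$, respectively, in the indexing $a_0a_1a_2\cdots$), and they differ exactly at the middle position, i.e.\ a bubble with $|u|=|v|=3$ exists. So for $n=7$ and $k=4$ no contradiction can come from the existence of one such pair; one must use the far more global fact that $\sigma$ maps \emph{every} factor to a factor. Moreover, for interior positions near the boundary ($k=2,3,n-2,n-1$) one context is short and bubbles certainly exist, so those cases need a separate argument in any event; and you cannot ``feed in $G_{n-1}$'' to shortcut this, because the restriction lemma requires an endpoint to be fixed already --- precisely what the endpoint claim is meant to supply.

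This missing step is exactly where the paper invests its work, and it uses a different engine: a digit-sum lemma asserting that if $g\in G_n$ sends positions $a,b,c$ to three \emph{consecutive} positions, then $a,b,c$ must form an arithmetic progression whose difference is a power of $2$; it is proved by constructing, for any other triple, a starting position $x$ with $a_x=a_{x+n}=a_{x+m}$, so that the image would contain $000$ or $111$, contradicting overlap-freeness. A parity-counting argument then produces three consecutive values $a_i,a_{i+1},a_{i+2}$ that are consecutive integers and located away from the ends, and the forced pattern is propagated outwards, the ``twisted'' configurations being excluded by exhibiting explicit factors (such as $001100$ and $001100101$) whose images would contain the forbidden overlap $01010$. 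If you wish to keep your induction framework, you still need a substitute of comparable strength for the endpoint claim --- for instance an argument at the level of that lemma, together with a separate treatment of positions near the ends; as it stands, this is a genuine gap rather than a routine case analysis.
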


\smallskip
In the proof of the theorem we will use the following well-known fact \cite{Thue}:

\begin{lemma} \label{lemma:Thue}
Let $w$ be the \textit{Thue-Morse} word. Then $w$ does not have a factor $uua$ where $u$ is a finite word and $a$ is its first letter.
\end{lemma}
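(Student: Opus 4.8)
The plan is to recognize this statement as the classical overlap-freeness of the Thue--Morse word and to prove it by infinite descent on the period, exploiting the self-similar (morphic) structure of $t$. First I would record the two structural facts that drive everything. Writing $\mu(0)=01$, $\mu(1)=10$, the word $t$ is the fixed point $t=\mu(t)$, which gives (P1) $t_{2j}\neq t_{2j+1}$ for every $j$ (consecutive aligned pairs are $01$ or $10$) and (P2) $t_{2j}=t_j$. From (P1) alone $t$ has no factor $ccc$: among any three consecutive positions some aligned pair $(2l,2l+1)$ occurs, and its two letters differ. I would also observe that the forbidden factor in the statement is exactly an overlap: if $u$ begins with $a$, write $u=ax$, so $uua=a\,x\,a\,x\,a$, a word of period $k=|u|=|x|+1$, and at an occurrence position $i$ this reads $t_{i+s}=t_{i+s+k}$ for $0\le s\le k$.

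Assuming an overlap occurs in $t$, I would fix one of minimal period $k$; by the no-$ccc$ fact $k\ge 2$. The heart of the argument is a halving step in the block-aligned even-period case. Suppose the occurrence starts at an even position $i$ and $k=2m$ is even. Taking only the equalities with even shift $s=2l$ ($0\le l\le m$) and applying (P2) via $t_{2p}=t_p$, the equalities $t_{i+2l}=t_{i+2l+k}$ become $t_{i'+l}=t_{i'+l+m}$ for $0\le l\le m$, where $i'=i/2$. These $m+1$ equalities are precisely an overlap of period $m=k/2$ in $t$. Since $1\le m<k$, this either contradicts minimality of $k$ or, if $m=1$, produces a factor $ccc$; in both cases we reach a contradiction. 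Hence a minimal overlap cannot be block-aligned with even period.

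The main obstacle is therefore the parity bookkeeping needed to reduce to the clean case above: one must show that a minimal overlap may be assumed to start at an even position and to have even period, ruling out odd period and odd alignment. I would handle this by case analysis on the parities of $i$ and $k$, using (P1) and the absence of $ccc$ to show that an odd-period or misaligned minimal overlap either desubstitutes to a strictly shorter overlap or forces a forbidden local pattern (an aligned pair $00$ or $11$, or three equal letters), and by checking the few residual short periods $k\in\{2,3\}$ directly against $F_{2k+1}(t)$. A subtlety I would stress is that the \emph{whole} overlap condition must be used, not merely the embedded square $uu$: the square analogue of the lemma is false, since $t$ contains the period-$3$ square $101101$ (at positions $11$--$16$), and it is exactly the extra trailing letter $a$ --- i.e. the $(k{+}1)$-st equality --- that fails there. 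This is what makes the parity analysis, rather than a bare square-avoidance argument, essential.
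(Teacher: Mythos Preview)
The paper does not give a proof of this lemma at all: it is stated as a ``well-known fact'' with a citation to Thue's 1912 paper, and is then used as a black box in the proof of Theorem~\ref{thm:Thue-Morse}. So there is nothing in the paper to compare your argument against.

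Your plan is the classical descent proof of overlap-freeness and is sound. The halving step for even period is correctly stated, the base case $k=1$ via (P1) is fine, and your remark that the trailing letter $a$ is essential (since the period-$3$ square $101101$ does occur in $t$) is well taken. One simplification: the ``parity bookkeeping'' you flag as the main obstacle is cleaner than your sketch suggests, and you do not actually need to check $k\in\{2,3\}$ by hand. For \emph{even} $k=2m$ the halving works regardless of the parity of the starting position $i$: if $i$ is odd, use $t_{2j+1}=\bar{t_j}$ on the even-shift equalities to get $t_{i'+l}=t_{i'+l+m}$ for $0\le l\le m$ with $i'=(i-1)/2$, again an overlap of period $m$. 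For \emph{odd} $k\ge 3$, there is no desubstitution to a shorter overlap; instead one gets a direct contradiction: combining the even-shift and odd-shift period equalities (each gives an \emph{inequality} after desubstitution, with shifts $(k-1)/2$ and $(k+1)/2$) forces, on the binary alphabet, a run of $(k+3)/2\ge 3$ equal consecutive letters in $t$, contradicting the no-$ccc$ fact. With these two observations the case analysis collapses to ``$k$ even'' versus ``$k$ odd'', and your infinite descent goes through without residual small cases.
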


In particular, it follows that $w$ does not contain the factors $000$, $111$, $01010$.

Let $G_n$ be the symmetry group for the Thue-Morse word. We prove the following lemma:

\begin{lemma} \label{lemma:Thue-2} Let $g \in G_n$ and $g(a) = i, g(b) = i+1$ and $g(c) = i+2$. Then the numbers $a,b,c$ form an arithmetic progression with difference $2^k$.
\end{lemma}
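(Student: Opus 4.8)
The plan is to analyze what it means for a symmetry $g \in G_n$ to move three letters into consecutive positions $i, i+1, i+2$. Let $a, b, c$ be the preimages; by applying $g$ to factors of $w$ we must always obtain factors of $w$, so in particular whatever constraints the Thue-Morse language puts on letters at positions $i, i+1, i+2$ of a factor translate, via $g^{-1}$, into constraints on positions $a, b, c$ of arbitrary factors. The key structural fact I would use is Lemma~\ref{lemma:Thue}, and especially its corollary that $w$ avoids $000$, $111$, and $01010$. The first of these says: in any factor of $w$, among any three consecutive positions, not all three letters are equal. So for every factor $x \in F_n(w)$, the letters $x_a, x_b, x_c$ are \emph{not} all equal. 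Conversely, since $g^{-1}$ is a bijection and $G_n$ is a group, for each factor $y$ the word $g^{-1}(y)$ is also a factor, so the triple of letters at positions $a, b, c$ of a factor can be anything the triple at positions $i, i+1, i+2$ can be, which (forbidding $000$ and $111$) is exactly ``not all three equal.''

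Next I would reduce to a statement purely about the positions $a, b, c$. Without loss of generality order them $a < b < c$ and write $d_1 = b - a$, $d_2 = c - b$; the goal is $d_1 = d_2 = 2^k$. I expect the argument to split into showing (i) $d_1 = d_2$, and (ii) this common value is a power of two. For (i) and for the ``power of two'' part, the tool is a combinatorial characterization of which index triples $\{a,b,c\}$ in the Thue-Morse word have the property that $w_a, w_b, w_c$ are never all equal and (because of $g^{-1}$) attain all non-constant patterns. I would invoke the self-similar structure of $w$: $w$ is the fixed point of $0 \mapsto 01$, $1 \mapsto 10$, so $w_{2m} = w_m$ and $w_{2m+1} = \overline{w_m}$. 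Using this, for a triple with fixed gaps $d_1, d_2$, one can track how the pattern $(w_a, w_b, w_c)$ behaves as $a$ ranges over a residue class, and determine exactly when the constant patterns $000$, $111$ are avoided for all starting positions $a$. The claim to establish is that this happens precisely when $d_1 = d_2 = 2^k$; any other gap configuration forces, for some shift, either $w_a=w_b=w_c$ (contradicting that $g(x)$ avoids $000$/$111$ as $x$ ranges over all length-$n$ factors), so such a $g$ cannot lie in $G_n$.

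The main obstacle will be step (ii) together with ruling out unequal gaps: carefully pinning down, from the Thue-Morse substitution rule, the exact set of gap pairs $(d_1, d_2)$ that are ``compatible'' in the above sense. I anticipate doing this by descent on the 2-adic valuation: if $d_1$ or $d_2$ is odd, then as $a$ runs over consecutive integers the parities of $a, b, c$ cycle through all patterns, and using $w_{2m+1} = \overline{w_m}$ one exhibits a factor whose image under $g$ contains $000$ or the still-forbidden pattern $01010$ (the latter is useful for handling the borderline cases where $000/111$ alone do not immediately bite); if both $d_1, d_2$ are even, one passes to $w_m$ via $w_{2m}=w_m$, halving all three indices and reducing $(d_1,d_2)$ to $(d_1/2, d_2/2)$, so by induction on $d_1+d_2$ we get $d_1=d_2=2^k$. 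The base case $d_1 = d_2 = 1$ (i.e., $k=0$) is immediate since then $a,b,c$ are literally consecutive. I would also double-check the edge cases where $g$ maps $a,b,c$ to the right end of the window (positions $n-2,n-1,n$) so that ``extend on the right'' is unavailable — here recurrence of $w$ and the fact that the forbidden-pattern constraints are two-sided lets the same argument go through.
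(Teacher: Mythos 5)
Your opening reduction is the same as the paper's: if some factor of the Thue--Morse word has equal letters at positions $a,b,c$, then its image under $g$ has three equal consecutive letters at $i,i+1,i+2$, contradicting Lemma~\ref{lemma:Thue}; hence the lemma amounts to showing that whenever the gaps $(d_1,d_2)=(b-a,c-b)$ are not of the form $(2^k,2^k)$, some position $x$ satisfies $w_x=w_{x+d_1}=w_{x+d_1+d_2}$. Your halving step (both gaps even $\Rightarrow$ pass to $w_{2m}=w_m$, reducing $(d_1,d_2)$ to $(d_1/2,d_2/2)$) is correct and is a genuinely cleaner way to organize part of the paper's work, which instead argues directly on binary expansions via the bit-count parity $f_2$.

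The genuine gap is the case where at least one gap is odd (and $(d_1,d_2)\neq(1,1)$): this is the entire content of the lemma after your reduction, and you only assert it (``parities cycle through all patterns, one exhibits a factor \dots''). Nothing in the sketch produces the required position $x$ with $w_x=w_{x+d_1}=w_{x+d_1+d_2}$; the paper needs a lengthy explicit construction of such $x$ by prescribing binary digits, split into many subcases, and no comparably easy argument is indicated. Moreover, your fallback on the forbidden pattern $01010$ does not work inside this lemma: the hypothesis only controls the three image positions $i,i+1,i+2$, so to force $01010$ in $g(u)$ you would also have to control the letters of $u$ at the unknown positions $g^{-1}(i-1)$ and $g^{-1}(i+3)$, which you cannot do; in fact $000/111$ alone suffice, but establishing that they suffice for every non-$(2^k,2^k)$ gap pair with an odd coordinate is exactly the unproven core. (The worry about triples near the right end of the window is unnecessary: any length-$n$ window of $w$ is a factor, so no extension argument is needed.)
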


\begin{proof} 
In this proof we consider all numbers in binary notation. Let the positions $a,b,c$ of the factor $u$ contain the letters $a_x,a_{x+n}$ and $a_{x+m}$ of the Thue-Morse word $a_1a_2\cdots$, where $n < m$ are positive integers and $a,b,c$ are $a_{x},a_{x+n},a_{x+m}$ in some order. If $a_x = a_{x+n} = a_{x+m}$ then $g(u)$ has three 1's or three 0's in a row. So $g(u)$ cannot be a factor from Lemma \ref{lemma:Thue}. This means that this double equality is impossible. We prove that if $n$ and $m$ are not two consecutive powers of two then there is a factor $u$ such that $a_x = a_{x+n} = a_{x+m}$, which we proved is impossible. We denote the number of 1's in the binary expansion of $n$ by $f_2(n)$. Consider two cases:

\textbf{Case 1}: $f_2(m) = f_2(n) = k$. 

\textbf{1.1.} If $k = 0$ ($n$ and $m$ have an even number of 1's in the binary expansion), then $x$ can be $2^k = 1000\cdots 000$ where the number of 0's in the number $x$ is greater than the number of positions in binary expansion of $m$ and $n$. Then $f_2(x) = f_2(x+m) = f_2(x+n) = 1$, a contradiction.

\textbf{1.2.} If $k = 1$ ($n$ and $m$ have an odd number of 1's in the binary expansion), then let the largest positions of the numbers $m$ and $n$ in the binary expansion be $l_m$ and $l_n$ respectively.

\textbf{1.2.1.} If $l_m = l_n$, then we put 1 at position $l_m$ of the number $x$. We get $f_2(x) = f_2(x+n) = f_2(x+m) = 1$, a contradiction.

\textbf{1.2.2.} Let $l_m \neq l_n$. We know that $m > n$. So $l_m > l_n$. If $n = 1\cdots01\cdots$ has 1 at position $l$ and 0 at position $l + 1 < l_n$. Then we can put 1 in $x$ at position $l$ (we assume that initially $x = 0$ and we add to $x$ the number $2^l$). We get that $f_2(x) = f_2(x+n) = 1$. If $f_2(x+m) = 1$ then we get a contradiction. Otherwise we put 1 at position $l_{x+m}$ of the number $x$ (add the number $2^{l_{x+m}}$ to $x$) where $l_{x+m}$ is the highest nonzero position of the number $m+x$. Since $n$ has 0 at position $l_{x+m}$, then $f_2(x) = f_2(x+n) = f_2(x+m) = 0$ a contradiction. Similarly we can first put 1 at position $l_n$ of the number $x$ and if necessary put 1 at position $l_{x+m}$. These arguments work in all cases except $n = 11\cdots 1100\cdots 00$, $m = 10\cdots $ and $l_m = l_n + 1$ ($n$ has 1 at the first positions and 0 at the next, $m$ has $10$ in the first two positions). We consider the following subcases:

{\bf (a)} If $n$ has 3 or more 1's, then:

\textbf{(a. 1)} If $m = 100\cdots $ ($m$ has $0$ at the third position), then we can put 1 at positions $l_m$ and $l_{m-2}$  in  $x$. Then $x+n = 10001\cdots $, $x+m = 1001\cdots $ and $f_2(x) = f_2(x+n) = f_2(x+m) = 0$; a contradiction.

\textbf{(a. 2)} If $m = 101\cdots $ (third position 1), then we can put 1 on $l_m, l_{m-1}$ and $l_{m-2}$ positions in the $x$. Then $x+n = 101\cdots$, $x+m = 1100\cdots$ and $f_2(x) = f_2(x+n) = f_2(x+m) = 1$; a contradiction.

{\bf (b)} Let $n = 100\cdots00$. If $m = 100\cdots00$, then we get that $m = 2n = 2^{k+1}$. So $a, b, c$ form a progression with  difference  $2^k$. Otherwise there is $l < l_{n}$ that $m$ has 1 at position $l$ and 0 at position $l+1$. Then we put 1 at positions $l$ and $l_{n}$ in the number $x$. Then $f_2(x) = f_2(x+n) = f_2(x+m) = 0$; a contradiction. This completes the proof in Case 1.

\textbf{Case 2}: $f_2(m) \neq f_2(n)$. 

\textbf{2.1.} If $l_m > l_n + 1$, then we put 1 at position $l_m$ in $x$. Then $f_2(x+m) = f_2(x+n) = k$, $f_2(x) = 1$. If $f_2(m)$ is odd, then $k = 1$, a contradiction. Otherwise put 1 on the $l_n$ position in the $x$. Then $f_2(x) = f_2(x+n) = 0$. If $f_2(x+m) = 0$, then we get a contradiction. Otherwise put 1 at position $l_m+1$ in $x$. Then $f_2(x) = f_2(x+n) = f_2(x+m) = 1$; a contradiction.

\textbf{2.2.} If $l_m = l_n+1$, then put 1 at position $l_m$  in $x$. Then $f_2(x+m) = f_2(x+n) = k$, $f_2(x) = 1$. If $f_2(m)$ is odd, then $k = 1$, a contradiction. Otherwise if $n + x = \cdots  01\cdots $ has 0 at position $l+1$ and 1 at position $l$ for some $l < l_n$ then we put 1 at position $l$ in $x$. We get $f_2(x) = f_2(x+n) = 0$. If $f_2(x+m) = 0$ we have a contradiction. Otherwise we put 1 at position $l_m+1$ in $x$. We get $f_2(x) = f_2(x+n) = f_2(x+m) = 1$, a contradiction. It remains to get a contradiction in the case when $f_2(m) = 0$ and $n + x = 11\cdots 1100\cdots 00$ with at least two 1's since $f_2(x+n) = 0$.  

\textbf{2.2.1.} If $m+x = 100\cdots $ ($m+x$ has 0 at the third position), then consider the number $s$ which has 1 at positions $l_m$ and $l_{m-1}$. Then $s+x = 100\cdots,  s+x+n = 110\cdots $, $s+x+m = 111\cdots$ and $f_2(s+x) = f_2(s+x+n) = f_2(s+x+m) = 0$; a contradiction.

\textbf{2.2.2.} If $m+x = 101\cdots $ ($m+x$ has 1 at the third position), then consider the number $s$ which has 1 at positions $l_m+1, l_{m}$ and $l_{m}-1$. Then $s+x = 1001000\cdots,  s+x+n = 1010\cdots,$ $s+x+m = 1100\cdots $ and $f_2(s+x) = f_2(s+x+n) = f_2(s+x+m) = 0$; a contradiction.

\textbf{2.3.} If $l_m = l_n$, then let $l$ be the largest index such that $m$ and $n$ have different bits at position $l$. And let $m$ has 0 at positions $l_1 > l_2 > \ldots > l_s > l$. Put 1 at positions $l_1,l_2,\ldots,l_s$ and $l$. Let the largest positions of the numbers $x+m$ and $x+n$ in the binary expansion be $l_{x+m}$ and $l_{x+n}$ respectively. We can notice that $l_{x+m} = l_{x+n}+1$ and $x+m$ has 0 at position $l_{x+n}$. The are two equal numbers among $f_2(x), f_2(x+m)$ and $f_2(x+n)$. 

\textbf{2.3.1.} If $f_2(x) = f_2(x+n) = k$, then in the case $f_2(x+m) = k$ we get a contradiction. In the case $f_2(x+m) \neq k$ put $1$ at position $l_{x+m}$ in $x$. Then $f_2(x) = f_2(x+n) = f_2(x+m)$; a contradiction. 

\textbf{2.3.2.} If $f_2(x) = f_2(x+m) = k$, then in the case$f_2(x+n) = k$ we get a contradiction. In the case $f_2(x+n) \neq k$ put $1$ at position $l_{x+n}$ in $x$. Then $f_2(x) = f_2(x+n) = f_2(x+m)$; a contradiction. 
% дописал 2.3

\textbf{2.3.3.} If $f_2(x+n) = f_2(x+m) = k$. If $f_2(x) = k$, then we get a contradiction. Otherwise put 1 at position $l_{x+m}$ and $l_{x+n}$ in $x$. Then $f_2(x) = f_2(x+n) = f_2(x+m)$; a contradiction. 

The lemma is proved. 
\end{proof}

\noindent\textit{Proof of Theorem \ref{thm:Thue-Morse}}.  
Let $g=[a_1,a_2,\ldots, a_n] \in G_n$, then $g^{-1}\in G_n$, and $g^{-1}$ is defined by $g(a_i) = i$. From Lemma \ref{lemma:Thue-2} we get that $a_{i-1}, a_i$ and $a_{i+1}$   for each $i$ form an arithmetic progression with difference $2^k$ (where $k$ could depend on $i$).
Let $n > 6$. If for each triple the difference of arithmetic progressions at least 2, then these differences are powers of 2. 

So all $a_i$ are of the same parity, which is false. Therefore, there are three elements $a_{i}, a_{i+1}, a_{i+2}$ consisting of three consecutive integers. We prove that we can chose $i$ such that $i \geq 3$ and $i+2 \leq n-1$. Suppose the converse. Then $a_3, a_4, \ldots , a_{n-1}$ are of the same parity. But it is possible only in the case $n = 7$ (otherwise the number of the elements of the same parity is more than $\frac{n+1}{2}$, which is impossible). But in the case $n = 7$ we get that $\{a_1,a_2,a_7\} = \{2,4,6\}$ and $\{a_3, a_4, a_5, a_6\} = \{1,3,5,7\}$. So, for the triples $(a_1,a_2,a_3)$, $(a_2,a_3,a_4)$, $(a_5,a_6,a_7)$ the difference of arithmetic progressions is 1. So $a_1,a_2,a_3,a_4$ are consecutive positive integers and $a_5,a_6,a_7$ are also consecutive positive integers. So either $\{a_1,a_2,a_3,a_4\} = \{1,2,3,4\}$ and $\{a_5,a_6,a_7\} = \{5,6,7\}$, or $\{a_1,a_2,a_3,a_4\} = \{4,5,6,7\}$ and $\{a_5,a_6,a_7\} = \{1,2,3\}$. In first case $\{a_1,a_2\} = \{2,4\}$, so $a_3 = 3$ and $a_4 = 1$, then $\{a_5, a_6\} = \{5,7\}$, but then $a_4,a_5$ and $a_6$ do not form an arithmetic progression. In second case $\{a_1,a_2\} = \{4,6\}$, so $a_3 = 5$ and $a_4 = 7$, then $\{a_5, a_6\} = \{1,3\}$, but then $a_4,a_5$ and $a_6$ do not form an arithmetic progression.
So for $n \geq 7$ we can choose three consecutive integers $a_{i}, a_{i+1}, a_{i+2}$, $i \geq 3$ and $i+2 \leq n-1$. Consider several cases:

\textbf{1.} If $a_{i} = k, a_{i+1} = k+1, a_{i+2} = k+2$, then $a_{i+3} = k+3$ etc. Similarly in the other direction. So, we get the identical permutation $[1,2,\ldots,n-1, n]$.

\textbf{2.} Similarly for the case $a_{i} = k+2, a_{i+1} = k+1, a_{i+2} = k$ we obtain the permutation $[n, n-1,\ldots, 2, 1]$.

\textbf{3.} If $a_{i} = k, a_{i+1} = k+2, a_{i+2} = k+1$, then $a_{i+3} = k+3$. It also follows from Lemma \ref{lemma:Thue-2} that $a_{i-1} = k-2$ or $a_{i-1} = k+4$. 

\textbf{3.1.} If $a_{i-1} = k-2$, then we get from Lemma \ref{lemma:Thue-2} that $a_{i-2} = k-1$. Then there is a factor $w$ of length $n$ which has the subword $001100$ at positions $k-2, k-1, \ldots, k+3$, because $f(2^t + 2^{t-1} + 5) = 0,  f(2^t + 2^{t-1} + 6) = 0, f(2^t + 2^{t-1} + 7) = 1, f(2^t + 2^{t-1} + 8) = 1, f(2^t + 2^{t-1} + 9) = 0,  f(2^t + 2^{t-1} + 10) = 0$ for sufficiently large $t$. But then $g(w)$ contains the sequence 01010 ($a_{i-2} = 0, a_{i-1} = 0, a_{i} = 1, a_{i+1} = 0, a_{i+2} = 1, a_{i+3} = 0$). It is impossible by Lemma \ref{lemma:Thue}.

\textbf{3.2.} If $a_{i-1} = k+4$, then Lemma \ref{lemma:Thue-2} implies that $a_{i-2} = k+8$ or $a_{i-2} = k-4$. 

In the first case consider a factor of $w$ which has a factor 001100101 at positions $k-4, k-3, \ldots, k+4$. The factor 001100101 occurs in the The-Morse word e.g. starting from the positions of the form $2^t + 2^{t-1} + 5$, for $t$ large enough. Then $a_{i-2} = 0, a_{i-1} = 1, a_i = 0, a_{i+1} = 1, a_{i+2} = 0$. It is impossible by Lemma \ref{lemma:Thue}. 

In the second case there is a factor $w$ which has a factor 001011010 at positions $k, k+1, \ldots, k+8$ (for example, at a postition of the form $2^t + 2^{t-1} + 9$ in the Thue-Morse word). Then $a_{i-2} = 0, a_{i-1} = 1, a_i = 0, a_{i+1} = 1, a_{i+2} = 0$. It is also impossible by Lemma \ref{lemma:Thue}. 

\textbf{4.} Other cases are considered similarly.

\medskip

We now prove that $G_n$ contains the permutations $g = [1,2,\ldots, n-1,n]$ and $[n,n-1,\ldots, 2,1]$. Indeed, the set factors of the Thue-Morse word is closed under reversal. 

It remains to treat small values of $n$, i.e. $n < 7$. For $n = 1,2,3$ we get $G_n = S_n$. For $n = 4,5$ from Lemma \ref{lemma:Thue-2} we get that only the following permutations are possible: $G_4 = \{[1,2,3,4], [4,3,2,1], [1,3,2,4], [4,2,3,1]\}$ $G_5 = \{ [1,2,3,4,5], [5,4,3,2,1], [2,1,3,5,4], [4,5,3,1,2]  \}$. This can be checked by computer. For $n = 6$ with some additional considerations from the general proof we get that only the following permutations are possible: $G_6 = \{[1,2,3,4,5,6], [6,5,4,3,2,1], [6,2,4,3,5,1], [1,5,3,4,2,6]\}$. This can also be checked by computer. 

The theorem is proved.

\section{Symmetry group of the period-doubling word}

In this section we characterize symmetry groups of the period-doubling word by recurrence relations and show that the size of the group grows exponentially.

\begin{definition}
\textit{The period-doubling word} $w = 0100010101000100\cdots$ is a binary word defined as follows:
$w_i \equiv k \, \pmod{2}$, where $i = 2^kl$ and $l$ is odd.
\end{definition}
In other words, $w_i$ is equal to the maximal exponent of 2 dividing $i$, modulo 2.

\begin{theorem}\label{thm:perioddoubling} For $n \geq 4$ the symmetry group $G_{n}$ of the period-doubling word satisfies the following recurrence relations:
$$
G_n= \begin{cases}
(G_{n/2}\times G_{n/2}) \rtimes S_2,  & \text{if n  is even}, \\ 
G_{(n-1)/2}\times G_{(n+1)/2}, &  \text{if n is odd},  \end{cases}
$$
and $G_1 = S_1$, $G_2 = S_2$, $G_3 = \{[1,2,3],[3,2,1]\}.$ 
\end{theorem}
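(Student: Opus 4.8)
The plan is to exploit the well-known self-similar structure of the period-doubling word $w$: the letters of $w$ at odd positions are all equal (to $0$), while the letters at even positions, read in order, reproduce $w$ itself. More precisely, if we write positions $1,2,\ldots,n$ and split them into odd positions $1,3,5,\ldots$ and even positions $2,4,6,\ldots$, then for any factor $u=w_{x+1}\cdots w_{x+n}$ the restriction of $u$ to even positions is again a factor of $w$ (of length $\lfloor n/2\rfloor$ or $\lceil n/2\rceil$ depending on the parity of $x$), and the restriction to odd positions is likewise a factor of $w$. The first step is to make this precise: show that $w_i=0$ whenever $i$ is odd, that $w_{2i}=w_i$ for all $i$, and hence that a window of length $n$ decomposes, up to a shift in parity, as an interleaving of a block of $0$'s (at one parity class) with a genuine factor of $w$ of length roughly $n/2$ (at the other parity class). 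Here I would be careful that a window of length $n$ starting at position $x+1$ contains $\lceil n/2\rceil$ positions of one parity and $\lfloor n/2\rfloor$ of the other, and that which parity carries the "all zeros" class depends on the parity of $x$; since $w$ is recurrent, both cases occur, so both orderings of the interleaving appear among factors.

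The second step is to translate this combinatorial decomposition into the group statement. Fix $n\geq 4$. Observe that any $\sigma\in G_n$ must respect parity of positions in a strong sense: because all odd-position letters of a factor are $0$ and (for large enough windows) even-position letters include a $1$, a permutation in $G_n$ cannot map an odd-parity-index position to an even-parity-index position in a way that destroys the "one block all zero, the other block a factor" structure. Concretely, I would argue that $\sigma$ either preserves each of the two arithmetic-progression classes $\{1,3,5,\ldots\}$ and $\{2,4,6,\ldots\}$ setwise, or (when $n$ is even, so the two classes have equal size) swaps them; and when $n$ is odd the classes have different sizes $\lceil n/2\rceil$ and $\lfloor n/2\rfloor$, so no swap is possible and $\sigma$ must fix each class setwise. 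Within each class, the action of $\sigma$ is itself a symmetry permutation of the corresponding "contracted" factor set, which is exactly $F_m(w)$ for $m=\lfloor n/2\rfloor$ or $\lceil n/2\rceil$ — here one uses both inclusions: a valid $\sigma$ on the class gives an element of $G_m(w)$, and conversely any element of $G_m(w)$ applied within a class (with the identity, or a transposition across classes when $n$ is even, on the rest) gives a valid element of $G_n(w)$, because the contracted image is a factor of $w$ and the interleaving with the zero block (respectively with another factor) is again a factor by the self-similar structure. This yields the direct product $G_{(n-1)/2}\times G_{(n+1)/2}$ in the odd case and the wreath-type product $(G_{n/2}\times G_{n/2})\rtimes S_2$ in the even case, where the $S_2$ is generated by the involution swapping the two parity classes index-wise — which is a symmetry precisely because $w$ contains windows with the zero block in either parity position (recurrence), and more generally contains the two interleavings of any pair of factors arising this way.

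The third and easiest step is the base cases: $G_1=S_1$ and $G_2=S_2$ are immediate (a one-letter or two-letter window imposes nothing beyond what $S_1,S_2$ already allow, and $w$ contains both $00$ and, say, $01$, $10$), and $G_3=\{[1,2,3],[3,2,1]\}$ follows by inspecting $F_3(w)$: the length-$3$ factors are $010,100,001,101,000$ — wait, one checks which actually occur — and concluding that the only nontrivial symmetry is the reversal; this is a short finite verification, and it also serves as the anchor making the recurrence well-defined down to $n=4$ (since $n=4$ reduces to $G_2\times G_2$ rsp.\ $(G_2\times G_2)\rtimes S_2$, and $n=5$ reduces to $G_2\times G_3$, etc.). The main obstacle I anticipate is the second step, and within it the careful bookkeeping of parities: proving rigorously that \emph{no} element of $G_n$ can mix the two parity classes except by the global swap available when $n$ is even. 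The cleanest route is probably to pick, for $n$ large, a pair of factors that "witness" an all-zero class versus a class containing an isolated $1$ flanked by $0$'s, and show any class-mixing permutation would produce the forbidden pattern $000$ or $11$ where $w$ has none — using that $w$ has no $000$ and no $11$ as factors (which follows from the definition and should be recorded as a preliminary lemma), together with the recurrence on $n$ to handle the induced action on each class. Once the "parity classes are (almost) invariant" claim is established, the product decomposition and the two inclusions assemble routinely.
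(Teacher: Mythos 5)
Your overall strategy is the same as the paper's: show that any $\sigma\in G_n$ must preserve the two parity classes (or swap them when $n$ is even), use the self-similar structure of the period-doubling word to identify the induced action on each class with an element of $G_{\lfloor n/2\rfloor}$ or $G_{\lceil n/2\rceil}$, prove the converse inclusion, and check the small cases directly. However, the plan contains two factual errors about the word itself, one of which sits exactly at the step you yourself identify as the main obstacle. First, the self-similarity is not $w_{2i}=w_i$ but $w_{2i}=1-w_i$ (the even-indexed subsequence is the \emph{complement} of $w$; e.g.\ $w_2w_4w_6w_8=1011$ while $w_1w_2w_3w_4=0100$). This is repairable, since complementation is a bijection of the alphabet commuting with position permutations, so the contracted factors are complements of factors of $w$ and the induced symmetry group is still $G_m(w)$ --- this is exactly how the paper uses the relation $w_k=1-w_{2k}$ --- but as written your ``reproduces $w$ itself'' claim is false and the converse inclusion argument needs this correction.

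The more serious problem is the proposed proof of the key lemma that no element of $G_n$ can mix the parity classes. You plan to derive a contradiction from the ``forbidden patterns $000$ or $11$,'' but $000$ \emph{is} a factor of the period-doubling word (e.g.\ $w_3w_4w_5=000$); only $11$ is forbidden, and what is true instead is that no factor of length $4$ is all zeros. Moreover, producing an occurrence of $11$ or of a long zero block from a class-mixing permutation is not automatic, since the two same-parity positions being separated need not land adjacently. The paper's argument is different and sharper: it shows that for \emph{every} even gap $2k$ there is a position $s$ with $w_s=w_{s+2k}=1$ (choosing $s=2^{2m+1}$ when $k$ is odd and $s=2m$ with $m$ odd when $k$ is even), so one can exhibit a factor carrying $1$'s at two prescribed same-parity positions; a permutation sending these to positions of different parities would yield a factor with a $1$ in each parity class, contradicting the fact that every factor has all zeros on one of its parity classes (because all odd-indexed letters of $w$ are $0$). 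Your ``isolated $1$ flanked by $0$'s'' witness, combined with the false no-$000$ premise, does not deliver this, so the central lemma of your plan is not established; with that lemma replaced by the gap-dependent construction above (or an equivalent one), the rest of your outline assembles correctly.
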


\begin{remark} The groups in Theorem \ref{thm:perioddoubling} are as follows. When $n$ is odd, permutations from $G_n$ act as permutations  on  even  positions  as  permutations from $G_{(n-1)/2}$ and on odd positions as permutations from $G_{(n+1)/2}$ independently.
When $n$ is even, permutations from $G_n$ act on even and odd positions independently as $G_{n/2}$, plus we can trade places even and odd positions. So, in the formula $(G_{n/2}\times G_{n/2}) \rtimes S_2$ the permutations from the first direct product of two copies of $G_{n/2}$ do not change parity of elements and permutations from the second direct product of two copies of $G_{n/2}$ change parity of each element. 
\end{remark}

\begin{remark}
In the conference version \cite{LP2021} there was a misprint in the theorem statement with direct product instead of semidirect product. The subgroups of $S_n$ indicated in the body of the proof were correct.
\end{remark}

For the proof, we need the following lemma:

\begin{lemma} \label{lemma:period-dubling}
Let $g \in S_n.$ Suppose that there are two positions $c$ and $d$ of the same parity such that $g(c)$ is even  and $g(d)$ is odd. Then $g \not\in G_n$. 
\end{lemma}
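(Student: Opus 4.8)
The plan is to exploit the self-similar structure of the period-doubling word $w$. Recall that $w$ is the fixed point of the substitution $0\mapsto 01$, $1\mapsto 00$, so the letters at odd positions form a periodic word $010101\cdots$ (coming from the first letter of each image block), while the letters at even positions, read in order, reproduce $w$ itself. The key structural observation to establish first is: for positions of the same parity, $w$ restricted to that arithmetic progression is either the constant-period word $0101\cdots$ (odd positions) or a copy of $w$ (even positions). In particular, if $c$ and $d$ have the same parity, then the letters $w_c$ and $w_d$ in a given factor may or may not coincide, but the crucial fact I will use is a distinguishing property between even and odd positions of the \emph{target}: whatever letter sits at an odd target position is forced by periodicity, whereas an even target position is more flexible. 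I should double-check which direction the parity constraint cuts; most likely the right formulation is that within any factor of $w$, the subsequence at even (target) positions and the subsequence at odd (target) positions are each constrained so that "mixing" a same-parity pair of source positions into different-parity target positions is incompatible.

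Concretely, here is the approach I would carry out. Suppose $g\in S_n$ with same-parity positions $c,d$ such that $g(c)$ is even and $g(d)$ is odd; I want to find a factor $x\in F_n(w)$ with $g(x)\notin F_n(w)$. Consider any occurrence of a factor $x = w_{s+1}\cdots w_{s+n}$ of length $n$. The letters at positions $c,d$ within $x$ are $w_{s+c}$ and $w_{s+d}$, and since $c\equiv d\pmod 2$, the indices $s+c$ and $s+d$ have the same parity. I will choose the occurrence (i.e.\ choose $s$) so that $s+c$ and $s+d$ are both \emph{odd}: then $w_{s+c}$ and $w_{s+d}$ both lie in the periodic subword $0101\cdots$, and since they are two odd numbers they are congruent mod $4$ or differ by $2$, hence by periodicity of the odd-indexed subword with period $2$ we can control $w_{s+c}$ and $w_{s+d}$ — in particular make $w_{s+c}=w_{s+d}$. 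Now in $g(x)$, the letter $w_{s+c}$ moves to target position $g(c)$ (even) and the letter $w_{s+d}=w_{s+c}$ moves to target position $g(d)$ (odd). So $g(x)$ has equal letters at an even and at an odd position. The final step is to argue that this forces $g(x)\notin F_n(w)$ for a suitable choice: because at odd target positions the letter is dictated by the $0101\cdots$ pattern, and at the even target position we can independently force the \emph{opposite} letter — by instead choosing $s$ with $s+c, s+d$ both \emph{even}, so that $w_{s+c}, w_{s+d}$ range over values of a shifted copy of $w$ and can be made to disagree with the forced odd pattern. Comparing the two occurrences yields a target word that cannot be extended consistently, contradicting $g(x)\in F_n(w)$ for all such $x$.

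The cleanest packaging is probably an invariant: \emph{every factor $v\in F_n(w)$ has the property that its subsequence at odd positions is a factor of $0101\cdots$ (equivalently, alternates $0,1$) and its subsequence at even positions is a factor of $w$}. This is immediate from the substitution structure and the fact that $w$ begins at position $1$ with the block structure aligned so odd positions carry the alternating word. Granting this invariant, the lemma is short: if $g$ maps same-parity $c,d$ to even $g(c)$ and odd $g(d)$, pick any factor $x$ in which $w$'s letters at the two same-parity positions $c,d$ are \emph{not} both consistent with the alternating pattern that $g(c), g(d)$ would demand — since $c,d$ have the same parity, one can find $x$ where $x_c = x_d$, but in $g(x)$ the positions $g(c)$ (even) and $g(d)$ (odd) receiving equal letters is fine only if the odd-position alternation permits it; choosing a second factor $x'$ with $x'_c \neq x'_d$ then forces a violation of the odd-position alternation in $g(x')$. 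Hence $g\notin G_n$.

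The main obstacle I anticipate is pinning down exactly which parity of target position is the "rigid" one and making the two-factor comparison airtight: one must verify that the alternating-at-odd-positions invariant is genuinely preserved (checking the alignment of the substitution $0\mapsto 01,\,1\mapsto 00$ with the indexing convention $w_i \equiv v_2(i) \bmod 2$, where the odd-indexed letters are all $0$ and the even-indexed subword reproduces $w$), and that having prescribed (possibly equal, possibly unequal) letters at a same-parity pair $c,d$ is actually realizable inside some length-$n$ factor — this uses recurrence of $w$ and the fact that the even-indexed subword is a full copy of $w$, so it contains both $00$-type and $01$-type patterns at the required spacing. Once the invariant and realizability are in hand, the contradiction is routine. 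I would write the proof as: (1) state and prove the odd/even subword invariant; (2) given $g$ as in the hypothesis, produce factors $x, x'$ of length $n$ with $x_c=x_d$ and $x'_c\ne x'_d$ (legitimate by recurrence and the copy-of-$w$ structure of the even subword, together with the alternating structure of the odd subword, noting $c\equiv d$); (3) observe that $g(x)$ and $g(x')$ agree off positions $g(c),g(d)$ but differ there, and that the odd-position alternation of $F_n(w)$ rules out at least one of them; conclude $g\notin G_n$.
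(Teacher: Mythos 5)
Your plan rests on a structural claim about the period-doubling word that is simply false, and the contradiction you aim for never materializes. With the paper's indexing ($w_i$ is the maximal exponent of $2$ dividing $i$, taken mod $2$), every odd-indexed letter of $w$ is $0$: the odd-indexed subsequence is the constant word $000\cdots$, not the alternating word $0101\cdots$ (you even state the correct fact, ``the odd-indexed letters are all $0$'', later in your own write-up, contradicting the invariant you build the proof on), and the even-indexed subsequence is the complement $\bar w$ (since $w_{2k}=1-w_k$), not $w$. Hence your invariant ``the odd-position subsequence of every factor alternates $0,1$'' is wrong (the factor $0100$ has $0,0$ at its odd positions); the true invariant, and the one the paper uses, is that in a factor $u=w_{m+1}\cdots w_{m+n}$ all occurrences of the letter $1$ sit at positions of a single parity (odd positions of $u$ are forced to be $0$ if $m$ is even, even positions if $m$ is odd). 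Because of this, your choices give no contradiction: taking $s+c,s+d$ odd just puts $0$'s at $c$ and $d$, and a word with $0$ at an even and at an odd position is a perfectly legitimate factor; unequal letters at $g(c)$ and $g(d)$ are likewise consistent with factors of one offset or the other. The only impossible configuration is a $1$ at an even position together with a $1$ at an odd position. Moreover, step (3) of your plan --- that $g(x)$ and $g(x')$ ``agree off positions $g(c),g(d)$'' --- is unjustified: $x$ and $x'$ are different factors taken from different occurrences and in general differ at many positions, so no ``inconsistent extension'' argument is available.

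What is missing is precisely the heart of the paper's proof: you must exhibit a factor with the letter $1$ at both positions $c$ and $d$. Writing $d-c=2k$, one needs arbitrarily large $s$ with $w_s=w_{s+2k}=1$, i.e.\ with the maximal exponent of $2$ in both $s$ and $s+2k$ odd. This requires a small case analysis: if $k$ is odd, take $s=2^{2m+1}$ (then the exponent of $2$ in $s$ is $2m+1$ and in $s+2k=2(2^{2m}+k)$ it is $1$); if $k$ is even, take $s=2m$ with $m$ odd (then both exponents equal $1$). Choosing $s$ large enough, some factor $u$ of length $n$ has $u_c=u_d=1$, so $g(u)$ has $1$'s at the even position $g(c)$ and the odd position $g(d)$, violating the single-parity invariant; hence $g\notin G_n$. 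As written, your proposal contains neither the correct invariant nor this construction, so it does not prove the lemma.
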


\begin{proof} Note that for any $u\in F(w)$ either on even positions of $u$ or on odd positions of $u$ we have only $0$'s, since $w_{2k+1} = 0$ for any $k$. Next, each factor of length at least $4$ contains an occurrence of 1, because there is a number that is not a multiple of 4 among two consecutive even numbers. 

We set $d - c = 2k$; then there is an integer $s$ such that $w_s = w_{s+2k} = 1$,  which is equivalent to the fact that the maximal exponent of 2 in $s$ and $s+2k$ is odd. Indeed, if $k$ is odd, then we can take $s = 2^{2m+1}$ for some $m \in \mathbb{N}$.
Then the maximal exponent of 2 in $s$ and $s + 2k$ is equal to $2m+1$ and $1$, respectively. If $k$ is even, then we can take $s = 2m$, where $m$ is odd, since the maximal exponent of 2 in $s$ and $s + 2k$ is equal to $1$. Then we get that there is a factor $u$ which has $w_s = w_{s+d-c} = 1$ at positions $c$ and $d$, because $s$ can be chosen sufficiently large. But then $g(u)$ has even and odd positions that have 1, which is impossible. It follows that $g(u)$ is not a factor of $w$, so $g \notin G_n$. The lemma is proved. 
\end{proof}

\begin{proof*}{Proof of Theorem \ref{thm:perioddoubling}} It is easy to see that $G_1 = S_1$, $G_2 = S_2$, $G_3 = \{[1,2,3],[3,2,1]\}$.
Consider $n \geq 4$. Lemma \ref{lemma:period-dubling} implies that if $n$ is odd, then $g$ does not change the parity of positions. If $n$ is even, then $g$ either does not change the parity of positions, or changes the parity of all positions (the latter is impossible for an odd $n$ since there are more odd positions). 

Let $n = 2l+1$ be odd, then $g \in S_n$ permutes odd positions as a permutation $h \in S_{l+1}$ and even positions as a permutation $p \in S_l$. Let $u = w_{m+1}w_{m+2} \ldots w_{m+n}$ be a factor of length $n$.

If $m$ is even, then $u$ has 0 at odd positions, and hence $g(u)$ also has 0 at odd positions. There is an occurrence of $1$ in any factor of length $n\geq 4$, so $g(u)$ occurs at an odd position: $ g(u) = w_{r+1}w_{r+2}\cdots w_{r+n}$, where $r$ is even. Then $p (w_{m+2}w_{m+4}\cdots w_{m+n-1}) = w_{r+2}w_{r+4}\cdots w_{r+n-1}$. The definition of the period-doubling word implies that  $w_k = 1 - w_{2k}$. It follows that $p(w_{\frac{m+2}{2}}w_{\frac{m+4}{2}}\cdots w_{\frac{m+n-1}{2}}) = w_{\frac{r+2}{2}}w_{\frac{r+4}{2}}\cdots w_{\frac{r+n-1}{2}}$ for any even $m$. This means that $p \in G_{l}$, because otherwise we can pick $m$ such that $p(w_{\frac{m+2}{2}}w_{\frac{m+4}{2}} \ldots w_{\frac{m+n-1}{2}})$ is not a factor of $w$. These arguments also imply that if $p \in G_l$, then $g(u)$ is a factor, because there is a factor of $p(w_{\frac{m+2}{2}}w_{\frac{m+4}{2}}\cdots w_{\frac{m+n-1}{2}}) = w_{\frac{r+2}{2}}w_{\frac{r+4}{2}}\cdots w_{\frac{r+n-1}{2}}$ and hence $g(w_{m+1}w_{m+2}w_{m+3}\cdots w_{m+n}) = g(0w_{m+2}0w_{m+4} \cdots)  = 0w_{r+2}0w_{r+4}\cdots = w_{r+1}w_{r+2}\cdots w_{r+n}$.  Then $p \in G_l$. If $m$ is odd, in a similar way we get that $h \in G_{l+1}$. And the arguments above imply that if $p \in G_l$ and $h \in G_{l+1}$, then $g \in G_{2l+1}$.

As a result, we get for odd $n = 2l+1$ that $g$ acts on even positions as a permutation from $G_l$ and on odd positions as a permutation from $G_{l+1}$.

Now let $n = 2l$ be even, and first consider the case when the permutation $g \in G_{2l}$ does not change the parity of positions. Similarly to the case of odd $n$, we get that $g$ acts on even positions as a permutation $p$ from $G_l$ and on odd positions as a permutation $h$ from $G_{l}$. Now consider the case when $g$ changes the parity of all positions. For a factor $u = w_{m+1}w_{m+2}\cdots w_{m+n}$, its image occurs at some position $r+1$: $g(u) = w_{r+1}w_{r+2}\cdots w_{r+n}$. Then $h(w_{m+1}w_{m+3}\cdots w_{m+n-1}) = w_{r+2}w_{r+4}\cdots w_{r+n}$ and  $p(w_{m+2}w_{m+4}\ldots w_{m+n}) = w_{r+1}w_{r+3}\cdots w_{r+n-1}$. If $u$ has 0 at odd positions, then $g(u)$ has 0 in even positions, which means that similarly to the previous case we can get that $p \in G_l$ and  $h \in G_l$. The arguments above also imply that if $p \in G_l$ and $h \in G_{l+1}$, then $g \in G_{2l+1}$.

Summing up, we proved that permutations from $G_{2n+1}$ act as $G_n$ at even positions and as $G_{n+1}$ at odd ones independently, so that $G_{2n+1} = G_n \times G_{n+1}$. Permutations from $G_{2n}$ act as $G_n$ at even and odd positions independently, plus we can trade places even and odd positions. Therefore, the permutations from $G_{2n}$ form the group $(G_{n/2}\times G_{n/2}) \rtimes S_2$.
The theorem is proved.  
\end{proof*}

\begin{corollary} 
The size of the symmetry group of the period-doubling word satisfies the following lower bound:
$|G_n| > 2^{\frac{n}{3}} $. In particular, $|G_{2^n}| = 2^{2^n-1}$.
\end{corollary}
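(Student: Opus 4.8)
The plan is to pass from the groups to their cardinalities. By Theorem~\ref{thm:perioddoubling}, for $n\ge 4$ the semidirect product has order $2|G_{n/2}|^2$ and the direct product has order $|G_{(n-1)/2}|\cdot|G_{(n+1)/2}|$, so
\[
|G_n|=2\,|G_{n/2}|^2 \ \ (n \text{ even}), \qquad |G_n|=|G_{(n-1)/2}|\cdot|G_{(n+1)/2}| \ \ (n \text{ odd}),
\]
while $|G_1|=1$, $|G_2|=2$, $|G_3|=2$, and hence $|G_4|=8$, $|G_5|=4$ from the recurrences. Everything then reduces to elementary induction on these numerical recursions; I would prove the exact formula $|G_{2^k}|=2^{2^k-1}$ first and the growth bound $|G_n|\ge 2^{n/3}$ second.

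For the exact value, set $a_k=|G_{2^k}|$. The even recurrence gives $a_0=|G_1|=1$ and $a_k=2a_{k-1}^2$ for $k\ge 1$, and a one-line induction yields $a_k=2^{2^k-1}$: if $a_{k-1}=2^{2^{k-1}-1}$ then $a_k=2\cdot(2^{2^{k-1}-1})^2=2^{2^k-1}$. This gives the ``in particular'' clause.

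For the growth bound I would prove by strong induction on $n$ the pair of statements: (i) $|G_n|\ge 2^{n/3}$ for all $n\ge 2$, and (ii) $|G_n|\ge 2\cdot 2^{n/3}$ for all even $n\ge 4$. The base cases $n\in\{2,3,4,5\}$ are checked directly from the values above (note $|G_3|=2=2^{3/3}$, so the bound is an equality there and strict for the other listed values). For the inductive step, if $n=2m\ge 4$ then $m\ge 2$ and (i) applied to $m$ gives $|G_n|=2|G_m|^2\ge 2\cdot 2^{2m/3}=2\cdot 2^{n/3}$, which proves both (i) and (ii) for $n$ and in particular $|G_n|>2^{n/3}$. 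If $n=2m+1\ge 7$ then $m\ge 3$, so exactly one of $m,m+1$ is even and that one, say $j$, satisfies $j\ge 4$; letting $j'$ denote the other index ($j'\ge 3$), statement (ii) gives $|G_j|\ge 2\cdot 2^{j/3}$ and statement (i) gives $|G_{j'}|\ge 2^{j'/3}$, so $|G_n|=|G_m|\cdot|G_{m+1}|\ge 2\cdot 2^{(j+j')/3}=2\cdot 2^{n/3}>2^{n/3}$. Thus $|G_n|>2^{n/3}$ for all $n\ge 4$, which is the asserted bound. The final estimate $|G_{2^k}|=2^{2^k-1}$ is then also consistent with it.

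I expect the only delicate point to be exactly this base-case/auxiliary-statement bookkeeping. The ``obvious'' induction from $|G_n|=|G_{\lfloor n/2\rfloor}|\cdot|G_{\lceil n/2\rceil}|\cdot 2^{[n\text{ even}]}$ degenerates at the leaf $|G_1|=1<2^{1/3}$ and is tight at $n=3$, so one cannot run a naive strict induction; carrying the stronger even-length estimate (ii) alongside (i) is what converts the odd-length products into strict inequalities. Everything else is a single-line estimate, and no group-theoretic input beyond the orders supplied by Theorem~\ref{thm:perioddoubling} is needed.
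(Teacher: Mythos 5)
Your proof is correct and takes essentially the same route as the paper: both reduce the corollary to the numerical recurrences $|G_{2m}|=2|G_m|^2$ and $|G_{2m+1}|=|G_m|\,|G_{m+1}|$ from Theorem~\ref{thm:perioddoubling}, prove $|G_n|>2^{n/3}$ by induction, and get $|G_{2^k}|=2^{2^k-1}$ by a separate one-line induction. Your auxiliary statement (ii) for even $n$ merely makes explicit the strictness bookkeeping that the paper compresses into ``for small $n$ we check directly'' (the bound is indeed an equality at $n=3$ and fails at $n=1$, so the naive strict induction needs exactly the care you supply).
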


\begin{proof}
First we prove by induction that $|G_n| > 2^{\frac{n}{3}} $. For small $n$ we check directly that it holds. Then $|G_{2n+1}| = |G_{n}|\cdot|G_{n+1}| > 2^{\frac{n}{3}+\frac{n+1}{3}} = 2^{\frac{2n+1}{3}}$ and $|G_{2n}| = 2|G_{n}|\cdot|G_{n}| > 2^{2\frac{n}{3}+1} = 2^{\frac{2n}{3}+1}.$

Now we prove that $|G_{2^n}| = 2^{2^n-1}$. Obviously, $|G_2| = 2$. Then $|G_{2^{n+1}}| = 2|G_{2^n}|\cdot|G_{2^n}| = 2\cdot 2^{2^n-1}\cdot 2^{2^n-1} = 2^{2^n-1}$.\end{proof}

\section{Symmetry groups of the paperfolding word}

In this section we characterize symmetry groups of the paperfolding word by recurrence relations and show that the size of the group grows exponentially.

\begin{definition} The \textit{paperfolding word} $w = 110110011100100\cdots$ is defined as follows:  If $i = k\cdot 2^m$, where $k$ is odd, then
$$w_i = \begin{cases}  1, & \mbox{if } k \equiv 1 \pmod{4},  \\ 0, & \mbox{otherwise}.  \end{cases}$$ 
\end{definition}

We say that a permutation $g$ is \textit{of  type 1} if $g$ keeps the parity of each position. A permutation $g$ is \textit{of type 2} if it changes the parity of each position.

The following proposition is straightforward:

\begin{proposition}\label{pr:typeofpermutations}
Let $g_1,g_2$ be permutations of type 1 and $h_1,h_2$ be permutations of type 2. Then permutations $g_1g_2$, $g_1h_1$ and $h_1h_2$ are permutations of types 1, 2 and 1, respectively.
\end{proposition}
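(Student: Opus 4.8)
The statement is purely about how the parity-preserving and parity-reversing properties of permutations behave under composition, so the plan is to unwind the definitions directly. Recall that $g$ is of type 1 when for every position $j$ the images $j$ and $g(j)$ have the same parity, and $g$ is of type 2 when $j$ and $g(j)$ always have opposite parity. Equivalently, writing $\pi(j)\in\{0,1\}$ for the parity of $j$, a type 1 permutation satisfies $\pi(g(j))=\pi(j)$ for all $j$, and a type 2 permutation satisfies $\pi(g(j))=\pi(j)+1 \pmod 2$ for all $j$.

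First I would verify $g_1 g_2$ is of type 1: for any position $j$, since $g_2$ is of type 1 the position $g_2(j)$ has the same parity as $j$, and since $g_1$ is of type 1 the position $g_1(g_2(j))$ has the same parity as $g_2(j)$; chaining these gives that $g_1 g_2(j)$ has the same parity as $j$. Next, $g_1 h_1$ is of type 2: applying $h_1$ to $j$ flips the parity, then applying $g_1$ preserves it, so the net effect on $j$ is a single parity flip, which is exactly the type 2 condition. Finally, $h_1 h_2$ is of type 1: applying $h_2$ flips the parity of $j$, and applying $h_1$ flips it again, so the composition returns the original parity. In all three cases one simply tracks the parity function through the composition, using that composing two flips cancels, a flip and a non-flip is a flip, and two non-flips is a non-flip — i.e. parities add modulo $2$.

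There is essentially no obstacle here: the only thing to be slightly careful about is the convention for composition (the paper's action is $g(u)=u_{g^{-1}(1)}\cdots u_{g^{-1}(n)}$, so $g_1 g_2$ means "apply $g_2$ then $g_1$" as functions on indices), but since the type of a permutation is invariant under taking inverses (if $\pi(g(j))=\pi(j)$ for all $j$ then also $\pi(g^{-1}(j))=\pi(j)$ for all $j$, and likewise for the flip), the argument is unaffected by which order one reads the product. Thus the proposition follows immediately, and indeed the parity-type of a permutation gives a homomorphism from the subgroup of $S_n$ generated by type 1 and type 2 permutations onto $\mathbb{Z}/2\mathbb{Z}$, with the type 1 permutations forming the kernel.
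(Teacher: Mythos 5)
Your proof is correct: the paper states this proposition without proof (labelling it straightforward), and your direct parity-tracking argument — parities add modulo $2$ under composition — is exactly the intended justification. The side remarks on composition order and invariance of type under inverses are accurate and harmless.
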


\begin{lemma}\label{lemma:paperfolding}
Let $w$ be a paperfolding word and $g \in G_n$, $n \geq 4$. Then $g$ is a permutation of the first or second type.
\end{lemma}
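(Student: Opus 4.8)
The statement is the exact analogue for the paperfolding word of Lemma~\ref{lemma:period-dubling} for the period-doubling word, so the plan is to mimic that argument, the key point being that in any factor of the paperfolding word, one of the two parity classes of positions is itself highly structured. Recall that if $i=k\cdot 2^m$ with $k$ odd, then $w_i$ depends only on $k\bmod 4$; in particular the odd-indexed letters ($m=0$) form the periodic sequence $110110011100100\cdots$ restricted to odd positions, which is the constant-free $2$-automatic pattern $1,0,1,0,\dots$ along a coarser scale. Concretely, I want a lemma of the following shape: for any window $u=w_{t+1}\cdots w_{t+n}$ with $n\ge 4$, the subword read along the odd positions of the window (i.e. the $w_j$ with $j\equiv 1\bmod 2$ inside the window) is determined up to the choice of $t$'s parity class, and as $t$ ranges over all admissible shifts this subword realises enough distinct Parikh data that one can force a prescribed pair of equal letters at two positions of the same parity.

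\textbf{Key steps.} First I would record the structural fact that the odd-indexed subsequence $(w_{2\ell-1})_{\ell\ge 1}$ of the paperfolding word equals, letter for letter, the paperfolding word itself read at the odd indices is again $2$-automatic and — crucially — is not eventually constant on any parity class of $\ell$; more usefully, I would isolate the single fact actually needed: \emph{for every $k\ge 1$ there is an integer $s$ such that $w_s=w_{s+2k}=1$ and $s$ can be taken arbitrarily large.} This is proved exactly as in Lemma~\ref{lemma:period-dubling}: if $k$ is odd take $s=2^{2m}$ (or a suitable odd multiple), so that $2$-adic valuations put both $s$ and $s+2k$ into the residue class mod~$4$ giving letter~$1$; if $k$ is even, reduce to the odd case by pulling out the common power of~$2$. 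Then, given $g\in S_n$ together with two positions $c,d$ of the same parity, $d-c=2k$, with $g(c)$ even and $g(d)$ odd, pick such an $s>0$ large enough that the factor $u=w_{s-c+1}\cdots w_{s-c+n}$ is well defined and carries the letter $1$ at its positions $c$ and $d$. Applying $g$ would then produce a word $g(u)$ having a $1$ at an even position and a $1$ at an odd position; but the defining property of the paperfolding word (odd-indexed letters are the ones with $2$-adic valuation~$0$, whose values are governed solely by the odd part mod~$4$) forbids — no wait, unlike the period-doubling word the paperfolding word does have $1$'s at both parities, so this exact clash does not arise; I must instead argue that the whole parity pattern of $g(u)$ must match a genuine window. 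So the real core is: the parity-of-valuation pattern of any length-$n$ window of $w$, i.e. the sequence $(\nu_2(s+1)\bmod 2,\dots,\nu_2(s+n)\bmod 2)$, is rigid enough that no permutation outside types~1 and~2 can preserve the set of all such patterns.

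\textbf{Where the difficulty lies.} The main obstacle, and the place where the paperfolding argument genuinely differs from the period-doubling one, is exactly this last point: for period-doubling the invariant used is ``$0$ on all odd positions,'' a one-sided condition that instantly kills parity-mixing; for paperfolding no such clean invariant exists, so I expect to have to exploit a \emph{pair} of short forbidden or forced configurations. The plan is: (i) show that among any four consecutive positions of $w$ exactly one has $2$-adic valuation $\ge 2$, hence the valuation-parity pattern along even positions of a window is itself a window of $w$ (this is the self-similarity $w_{2i}$ vs.\ $w_i$), while along odd positions it is eventually the fixed alternating pattern dictated by $k\bmod 4$; (ii) if $g$ sends two same-parity positions to opposite parities, then by (i) and the large-$s$ existence lemma we can realise two windows of $w$ that agree outside $\{c,d\}$ but whose $g$-images would have to be windows of $w$ agreeing outside $\{g(c),g(d)\}$ — one of those $g(c),g(d)$ being odd, where the alternating rigidity of the odd subsequence is violated. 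Concretely I would choose the two positions so that the forced odd-position pattern of $g(u)$ contains a block that never occurs (using that the odd subsequence of $w$ avoids, say, $00$ or some short word once restricted appropriately), deriving the contradiction $g(u)\notin F_n(w)$, hence $g\notin G_n$. I expect the bookkeeping — tracking which of $g(c),g(d)$ is the odd one and choosing $s$ so that the surrounding letters are pinned — to be the fiddly part, but conceptually it is the same move as in Lemma~\ref{lemma:period-dubling} carried out with the paperfolding word's two-letters-per-parity structure in place of the one-letter-per-parity structure.
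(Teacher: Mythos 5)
There is a genuine gap at exactly the point you yourself flag as the difficulty, and it is not mere bookkeeping. Your fallback plan is: take two same-parity window positions $c,d$ with $g(c)$ even and $g(d)$ odd, realise windows of $w$ agreeing outside $\{c,d\}$, and argue that their $g$-images, being factors of $w$, would violate the ``alternating rigidity'' of the odd-indexed letters. But that rigidity is attached to \emph{absolute} positions, not to window positions: in a factor of the paperfolding word, the constant classes sit on the odd window positions or on the even ones depending on the residue mod $4$ of the occurrence, and different occurrences --- in particular the occurrences of $g(u)$ and $g(u')$ for your two test windows --- may have different alignments. So no ``forced odd-position pattern of $g(u)$'' exists, and knowing the letters of $g(u)$ only at the two positions $g(c),g(d)$ (one even, one odd) can never contradict the per-occurrence constraint, which merely says that for \emph{some} alignment one parity class of window positions is consistently alternating mod $4$. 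To reach a contradiction one must control the images of entire residue classes mod $4$, not of two positions. (A secondary issue: your existence claim $w_s=w_{s+2k}=1$ via $s=2^{2m}$ does not go through verbatim for the paperfolding word; it needs case distinctions on $k\bmod 4$, exactly the kind of valuation bookkeeping the paper does.)

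This heavier analysis is what the paper's proof supplies. It colours window positions by residue mod $4$, notes that in every factor of length $\ge 4$ one parity class of colours carries constant letters (four possible situations), and then runs a case analysis on the set of colours hit by the image of colour $1$: all four colours, exactly three, the two odd colours, two adjacent colours, and so on. In each case it constructs a concrete factor --- choosing positions $s$ with prescribed $2$-adic valuation, e.g. $s=2^{m+1}(4t+3)$ or $s=2^{m}(4t+3)$, so that two letters at a prescribed even distance are forced --- whose image could not satisfy the constancy constraint on any colour, concluding that colours do not mix. A final counting/ordering argument (e.g.\ using that for $n=4k+1$ colour $1$ is the largest class) shows the induced permutation of the four classes preserves or flips parity globally, which is the lemma. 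Your sketch correctly identifies the relevant structural ingredients (the mod-$4$ rigidity of one parity class and the self-similarity $w_{2i}=w_i$) and correctly abandons the direct transplant of Lemma~\ref{lemma:period-dubling}, but the actual contradiction mechanism --- the class-by-class analysis with tailored valuation constructions --- is missing, and the two-position substitute you propose cannot close it.
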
 

\begin{proof}
We color positions from 1 to $n$ into 4 colors, into color 1 all positions congruent to 1 modulo 4, into color 2 all positions congruent to 2 modulo 4, and so on. Each color is present since $n \geq 4$. Note that for any factor odd positions of the word $w$ are either of colors 1 and 3, or colors 2 and 4. So there are 4 possible situations: on positions of color 1 there are only 1's, on positions of color 3 only 0's, or vice versa, or on positions of color 2 there are only 1's, on positions of color 4 only 0's, or vice versa. Consider a factor $u = a_{m+1}a_{m+2}\ldots a_{m+n}$ of a word $w$, $m+1  \equiv 1 \pmod{4}$. Then all positions of color 1 have 1 and all positions of color 3 have 0 ($u = 1\_0\_1\_0\_1\_0\_\ldots$). Let us see what colors receive the positions of the first color when $g$ is applied.

\textbf{(a)} If the positions of the first color change to the positions of each color then there is no factor $g(u)$ since $g (u)$ does not have a color with positions with only 0's.

\textbf{(b)} If the positions of the first color change to the positions of colors $1,2$ and $3$ then there are only 0's at positions of the 4th color of the factor $g(u)$. It means that at positions of the 2nd color there are only 1's. But note that there is a position of the factor $u$ not of the first color, which is translated to the position of the second color of the factor $g(u)$. It is not difficult to understand that this position could be 0 which means that there is a factor $u$ that $g(u)$ is not a factor.

The remaining cases when the positions of the first color change to the positions of the other three colors, are dealt with in the same way.

\textbf{(c)} If the positions of the first color are translated to the positions of the 1st and 3rd color, then $g (u)$ either has 0 at positions of the 2nd color and 1 at positions of the 4th color or vice versa. Assume that some positions of the 3rd color go to positions of the 2nd color. Then at positions of the 4th color of the factor $g (u)$ there are 1's. It is obvious that there is a position $m$ of another color in the factor $u$ such that the position $g(m)$ of a word $g(u)$ is of the $4$ color and $u$ has 0 at position $m$. Then $g(u)$ has 0 at position $g(m)$ of the 4th color, but it is impossible. Similarly, the positions of the 3rd color cannot be translated to the positions of the 4th color. It means that positions of the third color of the factor $u$ are translated to the positions of the 1st and 3rd colors of the factor $g(u)$.

Let $g$ is translate the positions $a,b$ of the 2nd or 4th colors of the factor $u$  to the positions of the second color of the factor $g(u)$. If we find a position $s>n$ such that $s$ is even, $a_s = 0$ and $a_{s+b-a} = 1$, then we can consider the factor $u$, where $a_s$ is at position $a$ and then $g(u)$ has both 0 and 1 at positions of the 4th color, but it is impossible. Such $s$ exists. Let $b-a = 2^mr$. If $r \equiv 3 \pmod{4}$ then we can take $s = 2^{m+1}(4t+3)$. If $r \equiv 1 \pmod{4}$ then we can take $s = 2^{m+2}(4t+3)$.

The case when the positions of the first color pass into the positions of the 2nd and 4th colors is analyzed similarly.

\textbf{(d)} If the positions of the first color are translated to the positions of the 1st and 2nd color. In this case $g(u)$ either has 0's at positions of the 4th color and 1's at positions of the 2nd color or 0's at positions of the 3rd color and 1's at positions of the 1st color. Suppose that some positions of the 3rd color are translated to positions of the 2nd color. Then $g(u)$ has 1's at positions of the 1st color   and 0's at positions of the 3rd color. But it is obvious that there is a position $m$ not of the 3rd color in the factor $u$ which is translated to the position $g(m)$ of the 3rd color in the factor $g(u)$ and  $u$ has 1 at position $m$. Then $g(u)$ we has 1 at position $g(m)$ of the 3rd color, but this is impossible. Similarly, the positions of the 3rd color cannot go to the positions of the 1st color, which means that positions of the third color go to the positions of the 3rd and 4th colors.

Suppose that there are positions $a,b$ of the 2nd or 4th colors of the factor $u$ such that $g(a)$ of the first color and $g(b)$ of the second. If we find a position $s>n$ such that $s$ is even, $a_s = 0$ and $a_{s+b-a} = 0$  then we can consider the factor $u$ where $a_s$ is at position $a$ and then $g(u)$ has both 0's and 1's at positions of the 1st and 2nd colors, but it is impossible. We show that such $s$ exists. Let $b-a = 2^mr$. If $r \equiv 1 \pmod{4}$ then we  take $s = 2^{m}(4t+3)$ where $\frac{4t+r+3}{2^j} \equiv 3 \pmod{4}$, $i$ is the maximal exponent of 2 in $4t+r+3$. If $r \equiv 3 \pmod{4}$, then we can take $s = 2^{m+2}(4t+3)$.

It means that we can translate only positions of the first color to positions of the first and second colors.

A similar reasoning apply in cases where the positions of the first color are translated to the positions of the 1st and 4th colors; in the positions of the 2nd and 3rd colors; in the positions of the 3rd and 4th colors.

These arguments imply that to the positions of any color in the factor $g(u)$ we can only translate positions of only one color in factor $u$. So the colors do not "mix". This means that positions of any color of the factor $u$ go to the positions of only one color of the factor $g(u)$. For example, if $n = 4k+1$, then there are more positions of the first color than of any other color. It means that positions of the first color translate only to the positions of the first color.

Let us assume that positions of the 1st color are translated to the positions of the 1st color. Then the positions of the 3rd color must be translated to the positions of the 3rd color. Assume the converse. Suppose that positions of the 3rd color are translated to the positions of the 2nd color. Then there are positions $a,b$ of the 2nd and 4th colors which are translated to positions of the 4th and 3rd colors, respectively. Then, for the point $c$, there is such a factor $u$ that at position $a$ we have $a_s = 0$ and at position $b$ we have $a_{s+b-a} = 1$. Then the positions of the 1st and 3rd colors are filled with 1's, and the positions of the 2nd and 4th colors are filled with 0's, but it is impossible. Similarly we show that positions of the 3rd color cannot be translated to the positions of the 4th color. This means that positions of the 3rd color should move to the positions of the 3rd color.

From this reasoning, we get that either even positions go to even positions, and odd positions go to odd positions or vice versa. It means that the permutation $g$ is a permutation of the first or second type. The lemma is proved.
\end{proof}

\begin{theorem} 
Let $w$ be the paperfolding word. We denote by $G_k^1$ and $G^2_k$ all permutations in $G_k$ of types 1 and 2, respectively. Then the symmetry group of $w$ is $G_n = G_n^1$ for odd $n$ and $G_n = G_n^1\cup G^2_n$ for even $n$ and the following recurrent formula holds:
$$G_n = \begin{cases} G^1_{[\frac{n}{2}]}\times G^1_{[\frac{n+1}{2}]}\cup G^2_{[\frac{n}{2}]}\times G^2_{[\frac{n+1}{2}]}, & \text{if n is odd}, \\ 
G_{[\frac{n}{2}]}\times G_{[\frac{n}{2}]}, & \text{if n is even}.
\end{cases}$$
\end{theorem}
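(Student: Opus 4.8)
The plan is to imitate the proof of Theorem~\ref{thm:perioddoubling}, with Lemma~\ref{lemma:paperfolding} playing the role of Lemma~\ref{lemma:period-dubling}. The structural fact driving the recursion is the self-similarity of the paperfolding word $w$: one has $w_{2i}=w_i$ for every $i$, while the subsequence of $w$ along the odd indices, $w_1w_3w_5\cdots$, is the periodic word $z=(10)^{\infty}$ of period $2$. Hence, for any factor $u=w_{m+1}\cdots w_{m+n}$, splitting the positions of $u$ according to parity, the class lying on odd positions of $w$ reads a factor of $z$ and the class lying on even positions of $w$ reads, after the halving $2i\mapsto i$, a factor of $w$; which class is which is determined by the parity of $m$. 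We also use that $G_k(z)$ is precisely the group of permutations of $\{1,\dots,k\}$ respecting residues modulo $2$, together with the parity-reversing ones when $k$ is even (this follows from Proposition~\ref{pr:period} and the fact that $z$ has only two factors of each length $\geq 2$); note that $G_k(z)$ has exactly the recursive shape of the formula being proved, with symmetric groups in place of the base pieces. Finally, for odd $n$ a type-$2$ permutation is impossible since the two parity classes of positions have different sizes, which already yields $G_n=G_n^1$ for odd $n$.

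For the forward inclusion, let $g\in G_n$ with $n\geq 4$. By Lemma~\ref{lemma:paperfolding}, $g$ is of type $1$ or of type $2$, so it either preserves or swaps the two parity classes; relabelling each class as $\{1,\dots,\lceil n/2\rceil\}$ and $\{1,\dots,\lfloor n/2\rfloor\}$ turns $g$ into a pair $(h,p)$. The heart of the matter is to show $h\in G_{\lceil n/2\rceil}$, $p\in G_{\lfloor n/2\rfloor}$, and --- when $n$ is odd --- that $h$ and $p$ have matching sub-type. To get $p\in G_{\lfloor n/2\rfloor}$ one feeds $g$ the factors $u$ of length $n$ whose even-within-$u$ class realizes a prescribed $V\in F_{\lfloor n/2\rfloor}(w)$ (with $m$ of the appropriate parity); then $g(u)$ carries an alternating $z$-pattern on one parity class, which forces any occurrence of $g(u)$ to start at a position of the parity for which that class is a $z$-factor --- otherwise $p$ would send the whole of $F_{\lfloor n/2\rfloor}(w)$ into the at most two $z$-factors of that length, contradicting injectivity once $\lfloor n/2\rfloor$ is large --- and on the complementary class $g(u)$ then reads exactly $p(V)$, so $p(V)\in F_{\lfloor n/2\rfloor}(w)$. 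Symmetrically $h\in G_{\lceil n/2\rceil}$, and tracking the residue modulo $4$ at which $g(u)$ occurs relative to that of $u$ --- in particular whether $g$ preserves or flips the alternating pattern --- links the sub-types of $h$ and $p$ when $n$ is odd. The small cases $n\leq 6$, where Lemma~\ref{lemma:paperfolding} does not apply and short $w$-factors can accidentally coincide with $z$-factors, are checked by computer, as for Theorems~\ref{thm:Thue-Morse} and~\ref{thm:perioddoubling}.

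For the reverse inclusion one argues by strong induction on $n$: given $h\in G_{\lceil n/2\rceil}$ and $p\in G_{\lfloor n/2\rfloor}$ of the admitted form, assemble the blockwise permutation $g$ and check $g(u)\in F_n(w)$ for each $u\in F_n(w)$ by splitting $u$ into its $z$-part $P$ and its $w$-part $V$, applying $h$ to $P$ (which, $z$ having period $2$, fixes the alternating pattern if $h$ is of type $1$ and flips it if $h$ is of type $2$, yielding a $z$-factor in either case), applying $p$ to $V$ (yielding a $w$-factor by the half-level case), and gluing the two results. The recursion is arranged precisely so that the residues modulo powers of $2$ at which $p(V)$ occurs --- information contained in the inductive statement at the half-level --- are compatible with the alternating pattern produced on the $z$-side, so the gluing does produce a genuine factor of $w$; the matching-sub-type requirement for odd $n$ is exactly this compatibility condition, while for even $n$ the two classes have equal size and a pure swap (type $2$) is also admissible, giving the full wreath-style group $G_{n/2}\times G_{n/2}$. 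The exponential lower bound on $|G_n|$ then follows from the recursion as in the period-doubling case. The main obstacle, I expect, is exactly this residue bookkeeping --- and pinning down why the sub-types of $h$ and $p$ must coincide for odd $n$: one must propagate, through the self-similar recursion, the information of which residue classes modulo $2,4,8,\dots$ a given factor occupies, and that a naive gluing can genuinely fail is already visible from the fact that certain factors of $w$, such as $000$, occur only at even positions.
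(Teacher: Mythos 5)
Your proposal follows essentially the same route as the paper's proof: Lemma~\ref{lemma:paperfolding} supplies the type-1/type-2 dichotomy, the permutation is split along parity classes into a pair $(h,p)$, the self-similarity $w_{2t}=w_t$ together with the alternating pattern on odd positions identifies $h$ and $p$ with elements of the half-length groups, the residue modulo $4$ of the position where $g(u)$ occurs links the sub-types of $h$ and $p$, and the converse is obtained by reassembling admissible pairs. The differences are presentational — you invoke the periodic word $z=(10)^{\infty}$ and an injectivity count where the paper tracks the residues of $s$ and $r$ modulo $4$ through its colour case analysis, and you delegate $n\le 6$ to a computer check where the paper argues directly for $n\ge 4$ — and your level of detail at the occurrence-parity and gluing steps is comparable to that of the paper itself.
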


\begin{remark} Here if $n$ is even, the group $G_n(w)$ is a direct product of two copies of $G_{\frac{n}{2}}$, one of them acts on even elements, the other one on odd elements. For odd $n$, the group $G_n(w)$ consists of two parts. Permutations from the first part  act as permutations from $G^1_{[\frac{n}{2}]}$ and  $G^1_{[\frac{n+1}{2}]}$ on even and odd positions, respectively. Permutations from the second part act as permutations from $G^2_{[\frac{n}{2}]}$ and  $G^2_{[\frac{n+1}{2}]}$  of permutations of type 2 on even and odd positions, respectively.  \end{remark}

\begin{proof} It is easy to see that $G_1 = S_1, G_2 = S_2, G_3 = S_3$. Consider $n \geq 4$. Let $u = w_{s+1}w_{s+2}\cdots w_{s+n}$ be a factor of $w$. It follows from Lemma \ref{lemma:paperfolding} that $g \in G_n$ is a permutation of the first or the second type. We analyze 2 cases:

\textbf{1.} Let $g \in G_n$ be a permutation of the first type acting on odd positions (1st and 3rd color) as a permutation $h$ and on even positions as $p$. Then consider an arbitrary factor $u = w_{s+1}w_{s+2}\cdots w_{s+n}$ and let $g(u) = w_{r+1}w_{r+2}\cdots w_{r+n}$. If $s \equiv 0 \pmod{4}$ then at positions of the first color of the factor $u$ we have 1's and at positions of the third color we have 0's. Consider two cases:

\textbf{1.1.} If $h$ is a permutation of the first type (translates positions of the 1st and 3th color to positions of the 1st and 3th color respectively), then $r \equiv 0 \pmod{4}$. Then $p(a_{s+2}a_{s+4}a_{s+6}\cdots) = a_{r+2}a_{r+4}a_{r+6}\cdots$. So $p(a_{\frac{s+2}{2}}a_{\frac{s+4}{2}}a_{\frac{s+6}{2}}\cdots) = a_{\frac{r+2}{2}}a_{\frac{r+4}{2}}a_{\frac{r+6}{2}}\cdots$ because $a_{2t} = a_t$. If $p \in G_{[\frac{n}{2}]}$ is permutation of first type, then $p$ is realized for some $g\in G_n$. The permutation $p$ is of the first type, because $\frac{s}2$ and $\frac{r}2$ are even. And if $p \notin G_{[\frac{n}{2}]}$, then $g \notin G_n$, because there is exist a factor $u$ that $p(a_{\frac{s+2}{2}}a_{\frac{s+4}{2}}a_{\frac{s+6}{2}}\cdots)$ is not a factor of the infinite word $w$, which means that $g(u)$ is not a factor.

\textbf{1.2.} If $h$ is a permutation of the second type (translates positions of the 1st and 3th color to positions of the 3th and 1st color respectively) then $r \equiv 2 \pmod{4}$. Then $p(a_{s+2}a_{s+4}a_{s+6}\cdots) = a_{r+2}a_{r+4}a_{r+6}\cdots$. So $p(a_{\frac{s+2}{2}}a_{\frac{s+4}{2}}a_{\frac{s+6}{2}}\cdots) = a_{\frac{r+2}{2}}a_{\frac{r+4}{2}}a_{\frac{r+6}{2}}\cdots$, because $a_{2t} = a_t$.  
If $p \in G_{[\frac{n}{2}]}$ is permutation of second type then $p$ is realized for some $g\in G_n$. The permutation $p$ is of the second type because $\frac{s}2$ is odd and $\frac{r}2$ is even. And if $p \notin G_{[\frac{n}{2}]}$, then $g \notin G_n$, because there is exist a factor $u$ that $p(a_{\frac{s+2}{2}}a_{\frac{s+4}{2}}a_{\frac{s+6}{2}}\ldots)$ is not a factor of the infinite word $w$, which means that $g(u)$ is not a factor.

We obtained that if $s \equiv 0 \pmod{4}$, then the permutation $g$ must translate odd positions to odd ones as $h$ and even positions to even ones as $p$, and these permutations must be of the same types and $p \in G_{[\frac{n}{2}]}$. We also proved that any such permutation $p \in G_{[\frac{n}{2}]}$ is realized for some $g\in G_n$.

Similarly the case of $s \equiv 2 \pmod{4}$ is analyzed and in the two remaining cases we get that $h \in G_{[\frac{n+1}{2}]}$.

We obtained that if a permutation $g$ is of the first type, then it translates even positions as a permutation $h\in G_{[\frac{n+1}{2}]}$ and odd positions as $p\in G_{[\frac{n}{2}]}$, and these permutations must be of the same type. Moreover, if $h$ and $p$ satisfy these conditions, then $g\in G_n$.

\textbf{2.} Let $g\in G_n$ be a permutation of the second type (this is possible only for even $n$) which changes the parity of each position. 
We associate a permutation $h\in G_{\frac{n}{2}}$ to the action of $g$ on odd positions, i.e. $h(12\ldots \frac{n}{2})=\frac{g(1)}{2} \frac{g(3)}{2}\ldots \frac{g(n-1)}{2}$, and we associate a permutation $p\in G_{\frac{n}{2}}$ to the action of $g$ on even positions, i.e. $p(12\ldots \frac{n}{2})=\frac{g(2)+1}{2} \frac{g(4)+1}{2}\ldots \frac{g(n)+1}{2}$.%''} 
Consider an arbitrary factor $u = a_{s+1}a_{s+2}\ldots a_{s+n}$, and let $g(u) =a_{r+1}a_{r+2}\ldots a_{r+n}$. If $s\equiv 0\pmod{4}$, then the positions of the first color of the factor $u$ are 1 and the positions of the third color are 0. Consider two cases:

\textbf{2.1.} If $h$ is a permutation of the first type (translates the positions of the 1st and 3rd color to the positions of the 2nd and 4th color respectively) then $r\equiv 3\pmod{4}$. Then  $p(a_{s+2}a_{s+4}a_{s+6}\ldots) = a_{r+1}a_{r+3}a_{r+5}\ldots$. So $p(a_{\frac{s+2}{2}}a_{\frac{s+4}{2}}a_{\frac{s+6}{2}}\ldots) = a_{\frac{r+1}{2}}a_{\frac{r+3}{2}}a_{\frac{r+5}{2}}\ldots$, since $a_{2t} = a_t$. If  $p\in G_{[\frac{n}{2}]}$ is the permutation of the second type, then $p$ is  realized for  some $g \in G_n$. The  permutation $p$ is  of  the  second  type because $\frac{s+2}{2}$ and $\frac{r+1}{2}$ are numbers of different parity. And if $p \notin G_{[\frac{n}{2}]}$, because there is exist a factor $u$ that $p(a_{\frac{s+2}{2}}a_{\frac{s+4}{2}}a_{\frac{s+6}{2}}\ldots)$ is not a factor of the infinite word $w$, which means that $g(u)$ is not a factor.

\textbf{2.2.} If $h$ is a permutation of the second type (translates the positions of the 1st and 3rd color to the positions of the 4th and 2nd color respectively) then $r\equiv 1\pmod{4}$. Then  $p(a_{s+2}a_{s+4}a_{s+6}\ldots) = a_{r+1}a_{r+3}a_{r+5}\ldots$. So $p(a_{\frac{s+2}{2}}a_{\frac{s+4}{2}}a_{\frac{s+6}{2}}\ldots) = a_{\frac{r+1}{2}}a_{\frac{r+3}{2}}a_{\frac{r+5}{2}}\ldots$, since $a_{2t} = a_t$. 
If  $p\in G_{[\frac{n}{2}]}$ is the permutation of the first type, then $p$ is realized for  some $g \in G_n$. The  permutation $p$ is  of  the  first  type because $\frac{s+2}{2}$ and $\frac{r+1}{2}$ are odd numbers. And if $p \notin G_{[\frac{n}{2}]}$ then $g\notin G_n$ because there is exist a factor $u$ that $p(a_{\frac{s+2}{2}}a_{\frac{s+4}{2}}a_{\frac{s+6}{2}}\ldots)$ is not a factor of the infinite word $w$, which means that $g(u)$ is not a factor.

We obtained that if $s\equiv 0\pmod{4}$, then the permutation $g$ must translate odd positions to even ones according to the permutation $h$,  odd positions to even ones according to $p$, these permutations must be of different types, and $p\in G_{[\frac{n}{2}]}$. We also proved that any such permutation $p\in G_{[\frac{n}{2}]}$ is realized for some $g \in G_n$.

The case $s\equiv 2 \pmod{4}$ is analyzed similarly, and in the two remaining cases we get that $h\in G_{[\frac{n+1}{2}]}$.

We  obtained that if the permutation $g$ is of the second type, then $n$ must be even and $g$ must translate odd positions to even ones as a permutation $h \in G_{[\frac{n+1}{2}]}$ and even positions to odd ones as $p \in G_{[\frac{n}{2}]}$, and these permutations must be of different types. Moreover, if $h$ and $p$ satisfy these conditions, then $g \in G_n$.

From Proposition \ref{pr:typeofpermutations} we obtain a natural operation on permutations of the first and the second type and then for odd $n$ the symmetry group consists only of permutations of type 1 and it is equal to $G_n=G_n^1=G^1_{[\frac{n}{2}]}\times G^1_{[\frac{n+1}{2}]}\cup G^2_{[\frac{n}{2}]}\times G^2_{[\frac{n+1}{2}]}$. For even $n$ the symmetry group consists of permutations of types 1 and 2 and it is equal to
$G_n = G_n^1\cup G^2_n = G^1_{[\frac{n}{2}]}\times G^1_{[\frac{n}{2}]} \cup G^2_{[\frac{n}{2}]}\times G^2_{[\frac{n}{2}]} \cup G^1_{[\frac{n}{2}]}\times G^2_{[\frac{n}{2}]} \cup G^2_{[\frac{n}{2}]}\times G^1_{[\frac{n}{2}]} = G_{[\frac{n}{2}]}\times G_{[\frac{n}{2}]}$. The theorem is proved.
\end{proof}

\begin{corollary} 
For $n \geq 3$ the following inequality holds: $|G_n| > 2^{\frac{n}{5}}$.
\end{corollary}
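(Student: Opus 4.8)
The plan is to prove, by strong induction on $n$, the slightly stronger assertion $|G^1_n|\ge 2^{n/5}$ for all $n\ge 3$, where $G^1_n$ denotes the group of type-$1$ permutations of $G_n$ from the theorem above. Since $G^1_n\subseteq G_n$, this immediately yields $|G_n|\ge 2^{n/5}$. Carrying $|G^1_n|$ rather than $|G_n|$ through the induction is the crucial choice: for odd $n=2l+1$ the recurrence reads $|G_n|=|G^1_l|\cdot|G^1_{l+1}|+|G^2_l|\cdot|G^2_{l+1}|$, so lower bounds on $|G_l|$ and $|G_{l+1}|$ alone do not bound this sum, whereas a lower bound on the type-$1$ parts already bounds the first summand from below.

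First I would compute the small values by unwinding the recurrences. One has $G_1=S_1$, $G_2=S_2$, $G_3=S_3$, $G_4=G_2\times G_2$; since the only type-$1$ permutation of $S_2$ is the identity (the reversal of $S_2$ exchanging an odd with an even position, hence of type $2$), while the type-$1$ permutations of $S_3$ are the identity and the reversal, we get $|G^1_2|=|G^2_2|=1$, $|G^1_3|=2$, $|G^2_3|=0$, whence $|G^1_4|=|G^1_2|^2+|G^2_2|^2=2$ and $|G^1_5|=|G^1_2|\cdot|G^1_3|+|G^2_2|\cdot|G^2_3|=2$. Thus $|G^1_n|=2\ge 2^{n/5}$ for $n=3,4,5$; these serve as the base cases, and they must be checked directly because the recurrences at lengths $3,4,5$ reach down to $G_1$ and $G_2$, where $|G^1_1|=|G^1_2|=1$ is already too small to satisfy the bound.

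For the inductive step, fix $n\ge 6$ and assume the claim at every index in $\{3,\dots,n-1\}$. If $n=2m$ is even, then $m\ge 3$ and the even-length recurrence gives $|G^1_n|=|G^1_m|^2+|G^2_m|^2\ge |G^1_m|^2\ge\bigl(2^{m/5}\bigr)^2=2^{n/5}$. If $n=2l+1$ is odd, then $n\ge 7$, so $l\ge 3$ and also $l+1\ge 3$, and the odd-length recurrence gives $|G^1_n|=|G^1_l|\cdot|G^1_{l+1}|+|G^2_l|\cdot|G^2_{l+1}|\ge|G^1_l|\cdot|G^1_{l+1}|\ge 2^{l/5}\cdot 2^{(l+1)/5}=2^{(2l+1)/5}=2^{n/5}$, using the induction hypothesis at $l$ and at $l+1$. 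This closes the induction, and consequently $|G_n|\ge|G^1_n|\ge 2^{n/5}$ for all $n\ge 3$.

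The only point requiring care is the one just used: the induction cannot be run on $|G_n|$ directly, and the type-$1$ subgroup must be singled out so that both terms of the odd-length recurrence are controlled at once; the rest is a routine comparison of exponents. Finally, tracking the cases of equality in the two inductive steps together with $|G_n|=|G^1_n|+|G^2_n|$ shows that the inequality is in fact strict, $|G_n|>2^{n/5}$, for every $n\ge 3$ other than $n=5\cdot 2^{k}$, where $|G_n|=2^{n/5}$ exactly; along that sparse set the bound is attained and should be stated with ``$\ge$''.
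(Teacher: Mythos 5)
Your argument is essentially the paper's: both prove by induction that the number of type-1 permutations satisfies $|G^1_n|\ge 2^{n/5}$, with base cases $n=3,4,5$ and the inductive step $|G^1_n|\ge|G^1_{\lfloor n/2\rfloor}|\cdot|G^1_{\lceil n/2\rceil}|$ extracted from the recurrence; the only cosmetic difference is that the paper checks the base cases by exhibiting explicit permutations (it lists four type-1 elements of $G_4$), while you unwind the recurrence (which yields two at $n=4$ --- still enough for $2^{4/5}$). Your closing caveat matches what the paper actually proves as well: its argument, like yours, only gives the weak inequality, since already at $n=5$ the base case produces exactly $2=2^{5/5}$ type-1 permutations and $G_5=G_5^1$, so the strict ``$>$'' of the statement is not attained there and the bound should indeed be read as ``$\ge$''.
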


\begin{proof} We need to show that $|G_n| \geq 2^{\frac{n}{5}}$ for $n \geq 3$. We will prove that the number of permutations of the first type is at least $2^{\frac{n}{5}}$. For $n \geq 3$ this is true because for $n = 3$ there are two permutations of the first type  ([1,2,3],[3,2,1]), for $n = 4$ there are at least 4 permutations of the first type ([1,2,3,4],[3,2,1,4],[1,4,3,2], [3,4,1,2]) and for $n = 5$ there are at least 2 permutations of the first type  ([1,2,3,4,5],[5,2,3,4,1]). For bigger $n$ it can be proved by induction. It follows from the 1st case that there are at least $2^{\frac{[\frac{n}{2}]}{5}}\cdot2^{\frac{[\frac{n+1}{2}]}{5}} = 2^{\frac{n}{5}}$  permutations of the first type, because each permutations $g$ of the first type consists of permutations of the first type $h \in G_{[\frac{n+1}{2}]}$ and $p \in G_{[\frac{n}{2}]}$ belong to $G_n$. That is what we needed to prove. 
\end{proof}

\section{Symmetry groups of Toeplitz words}

In this section we characterize symmetry groups of a subclass of Toeplitz words and show that symmetry groups of Toeplitz words are quite diverse.

\begin{definition} Let ? be a letter not in  
$\Sigma$. For a word $u \in \Sigma(\Sigma \cup \{?\})^{*}$, let $$T_0(u) = ?^{\omega}, \quad T_{i+1}(u) = F_u(T_i(u)),$$ where $F_u(w)$, defined for any $w \in (\Sigma \cup \{?\})^{\omega}$, is the word obtained from $u^{\omega}$ by replacing the sequence of all occurrences of ? by $w$.
Clearly, $$T(u) = \lim\limits_{i \to \infty} T_i(u) \in \Sigma^{\omega}$$ is well defined, and it is referred to as the \emph{Toeplitz word} determined by the pattern $u$.
\end{definition}

\begin{example}
The paperfolding and the period-doubling words are Toeplitz words determined by patterns $1?0?$ and $010?$, respectively.  
\end{example}

Let $u$ be a pattern with one space (or symbol ?), $|u| = k \geq 4$ and all letters of $u$ distinct. Let $w$ be the Toeplitz word determined by $u$, and $G_n$ be the symmetry group of $w$. We divide the positions of  $w$ into $k$ groups of positions congruent modulo $k$. On one of these groups there are spaces in the word $T_1(u)$; we let $T_{space}$ denote this group. 
The positions of any factor of $w$ are also divided into $k$ corresponding groups. We first prove two 
lemmas concerning these groups. 

\begin{lemma}\label{lemma:Toeplitz}
Any $\sigma \in G_n$ translates positions from the same group to positions from the same group (probably distinct from the initial group). 
\end{lemma}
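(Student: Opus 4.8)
\textbf{Proof plan for Lemma \ref{lemma:Toeplitz}.}

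The plan is to set up a "coloring" argument in the same spirit as the proofs for the paperfolding and period-doubling words, but now with $k$ colors instead of $2$ or $4$. First I would fix notation: color each position $i$ of $w$ (and correspondingly each position of a length-$n$ factor, once an occurrence is fixed) by its residue $i \bmod k$, so that $T_{space}$ is one distinguished color $c_0$. The crucial structural observation to extract from the definition of $T(u)$ is that positions of color $c_0$ carry a "copy" of the whole word $w$ (scaled by $k$), while positions of every other color $c \neq c_0$ carry a fixed letter of $u$ — the same letter at \emph{every} occurrence of that color. Since $|u|=k\ge 4$ and the $k$ letters of $u$ are all distinct, the $k-1$ non-$c_0$ colors each display a \emph{constant} letter, and these $k-1$ constant letters are pairwise distinct; moreover none of them need equal any particular value occurring on color $c_0$, but what matters is that on color $c_0$ both letters from $\Sigma$ that occur in $w$ do occur (for $n$ large enough $w$ restricted to an arithmetic progression of difference $k$ is again essentially a copy of $w$, hence non-constant).

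Next I would argue that $\sigma \in G_n$ cannot "split" a color. Suppose positions $a,b$ of the same color $c$ in a factor $u$ get sent by $\sigma$ to positions $\sigma(a),\sigma(b)$ of two \emph{different} colors in $\sigma(u)$. I will derive a contradiction by exhibiting a factor on which $\sigma$ fails. The key sub-claim is a "realizability" statement: for any two positions $a<b$ of the same color in a window of length $n$, and any prescribed pair of letters $(\alpha,\beta)\in\Sigma^2$, there is a factor of $w$ of length $n$ having $\alpha$ at position $a$ and $\beta$ at position $b$ — at least when $c = c_0$, because along the color-$c_0$ progression $w$ reproduces $w$ itself (up to scaling by $k$) and $w$ contains all factors of the relevant form that we need here (concretely, it contains $00$, $01$, $10$, $11$ at appropriately spaced positions; this is where one uses the explicit arithmetic structure of $T(u)$, choosing the base point $s$ of the form $k^j\cdot(\text{something})$ to control the letter at each scaled position, exactly as in Lemmas \ref{lemma:period-dubling} and \ref{lemma:paperfolding}). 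For $c\ne c_0$ the color is constant, so if $\sigma$ sends a position of color $c_0$ \emph{and} a position of some constant color $c$ onto two positions of the same color in $\sigma(u)$, one of which must then be forced constant while the color-$c_0$ source ranges over both letters of $\Sigma$, again a contradiction. Running through the (finitely many) ways a color can be split or merged — a color of $u$ mapping into $\ge 2$ colors of $\sigma(u)$, or two colors of $u$ mapping into a single color of $\sigma(u)$ — each is killed either by the realizability sub-claim (for $c_0$) or by the distinctness-of-constant-letters observation (for the others). A counting remark closes the bookkeeping: since $|u|=k$, in a window of length $n\ge k$ some color class has the strict-majority-type size forcing it to be matched with a full color class of $\sigma(u)$, and then the "no splitting, no merging" facts propagate to all colors.

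Finally I would phrase the conclusion: the induced map on the $k$ color classes is a well-defined bijection of the set of colors, i.e. $\sigma$ carries each residue class mod $k$ bijectively onto another residue class mod $k$. I expect the main obstacle to be the realizability sub-claim: making precise, for the given pattern $u$ with distinct letters, that along the color-$c_0$ arithmetic progression one can independently prescribe the letters at two chosen scaled positions by choosing the base point's $k$-adic valuation. This requires unwinding the iterated substitution $T_{i+1}(u)=F_u(T_i(u))$ to see that the letter of $w$ at a position $m$ is governed by the first "digit" $\ne c_0$ in the base-$k$ expansion of $m$ (the analogue of "maximal power of $2$ dividing $i$" used for the period-doubling and paperfolding words), and then a Chinese-remainder / digit-selection argument to separate the two target positions. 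The rest of the case analysis is routine and parallels the two preceding proofs; I would only write out one representative splitting case in detail and say "the remaining cases are analogous", as the paper does elsewhere.
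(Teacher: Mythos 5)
Your plan has a genuine gap, in two places. First, the ``realizability'' sub-claim that you yourself identify as the crux is false as stated: it is not true that for any two same-group positions and any prescribed pair $(\alpha,\beta)\in\Sigma^2$ there is a factor of $w$ realizing it, even when that group is aligned with $T_{space}$. Along the $T_{space}$ positions the scattered subword is $w$ itself, so your claim amounts to saying that for every distance $d$ and every pair $(\alpha,\beta)$ there exists $s$ with $w_s=\alpha$ and $w_{s+d}=\beta$. Take $u=12?3$ (one hole, $k=4$, distinct letters) and $d=1$: positions $\equiv 1,2,0 \pmod 4$ carry $1,2,3$ respectively and every $T_{space}$ position is followed by a $3$, so the pair $(1,1)$ at distance $1$ never occurs. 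Moreover your phrases ``both letters from $\Sigma$'' and ``it contains $00,01,10,11$'' show you are implicitly in the binary period-doubling/paperfolding setting, whereas here $|\Sigma|=k-1\ge 3$; the hypothesis ``all letters of $u$ distinct, $k\ge 4$'' is not incidental --- it is exactly what the correct argument consumes.

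Second, and more structurally, even a corrected realizability statement would not close the argument. After you construct one test factor $v$ with prescribed letters at two positions of the split group $A$, you must exclude \emph{every} occurrence of $\sigma(v)$, and you do not control which group of $\sigma(v)$ aligns with $T_{space}$ in such an occurrence; a single factor with, say, the same letter $x$ at positions of both image groups $B$ and $C$ is perfectly consistent (one of $B,C$ aligns with $T_{space}$, the other with the constant-$x$ group), so no contradiction follows. The paper's proof obtains its test factors not from the $T_{space}$ copy at all, but by aligning the whole group $A$ with three \emph{different constant} groups of the pattern, giving factors $v_x,v_y,v_z$ in which $A$ is constantly $x,y,z$ (three distinct letters, available since $k-1\ge 3$); then, since each image $\sigma(v_x),\sigma(v_y),\sigma(v_z)$ has one letter in both $B$ and $C$, in each occurrence one of $B,C$ must align with $T_{space}$, and by pigeonhole two of the three images agree on which one --- forcing the other group to carry two different letters in a constant group, a contradiction. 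This align-with-constant-groups-plus-pigeonhole step is the missing idea in your plan, and the digit/valuation analysis of $T(u)$ you anticipate as the main technical work is not needed.
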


\begin{proof}
Assume the converse. Let $A$ be a group whose positions move to the positions of different groups $B$ and $C$. Then consider any factor $v_x$ of the word $w$ of length $n$ in which the letter $x$ stands at each position from the group $A$. Then $v' = \sigma(v)$ is a factor of $w$. So the word $v'$ has the letter $x$ at some positions of the groups $B$ and $C$. Then for any occurrence of $v'$ in $w$, one of the groups $B$ and $C$ of $v'$  corresponds to $T_{space}$ in $T_1(u)$.  

Since $|u|\geq 4$, we can choose three words $v_x,v_y,v_z$ in which all positions of $A$ are filled with three distinct letters $x, y$, and $z$, respectively. Then for at least two of the three factors $\sigma(v_x),\sigma(v_y)$ and $\sigma(v_z)$, the group $T_{space}$ corresponds to either $B$ or $C$; say in $\sigma(v_x)$ and $\sigma(v_y)$ the group $B$ corresponds to $T_{space}$. Hence only one letter can correspond to the group $C$ in  $w$. But in the words $\sigma(v_x)$ and $\sigma(v_y)$ the positions at the group $C$ have the letters $x$ and $y$. A contradiction.
\end{proof}

\begin{lemma}\label{lemma:Toeplitz-2}
Any $\sigma \in G_n$ acts on the groups as a cyclic shift or as the identity permutation.
\end{lemma}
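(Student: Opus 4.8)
The plan is to build on Lemma~\ref{lemma:Toeplitz}, which already tells us that $\sigma$ induces a well-defined permutation $\bar\sigma$ on the $k$ groups of positions (congruence classes modulo $k$). So the task reduces to showing that this induced permutation $\bar\sigma \in S_k$ is a power of the cyclic shift $(1\,2\,\dots\,k)$ acting on the groups in their natural cyclic order, i.e. $\bar\sigma$ acts as translation modulo $k$. First I would fix a long factor $v$ of $w$ of length $n$ and look at which group of $v$ corresponds to $T_{space}$ in $T_1(u)$ at a given occurrence; shifting the occurrence by one position in $w$ cyclically shifts which group plays the role of $T_{space}$, and more importantly the non-space groups of $v$ carry, read in cyclic order starting just after the space group, precisely the fixed letters of the pattern $u$ (the $k-1$ letters of $u$ other than $?$), while the space group carries an arbitrary letter of $w$.

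The key step is then a rigidity argument on the pattern. Since all $k-1$ non-$?$ letters of $u$ are \emph{distinct}, a factor $v$ of $w$ determines, up to the choice of occurrence (equivalently, up to a cyclic shift of the $k$ groups) and the letter in the space group, exactly where each fixed letter of $u$ sits. When we apply $\sigma$, the word $\sigma(v)$ must again be a factor of $w$, hence again its non-space groups must read off the letters of $u$ in the correct cyclic order after the space group. Because the letters of $u$ are pairwise distinct, this forces $\bar\sigma$ to preserve the cyclic order of the groups: $\bar\sigma$ can only rotate the labels, not reflect or otherwise scramble them. Concretely, I would take a factor $v$ long enough (length $n \geq 2k$, say) so that at least two full periods of the pattern skeleton appear; then the position of, say, the first fixed letter $u_1$ relative to the second $u_2$ inside $v$ is rigid, and $\sigma$ must send the group of $u_j$ to the group of $u_{j+c}$ (indices mod $k$) for a common offset $c$ independent of $j$ — otherwise two distinct fixed letters of $u$ would be forced into positions that in every occurrence of $\sigma(v)$ demand inconsistent values, contradicting distinctness exactly as in the proof of Lemma~\ref{lemma:Toeplitz}. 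This offset $c$ is the amount by which $\bar\sigma$ cyclically shifts, so $\bar\sigma = (1\,2\,\dots\,k)^c$, which is a cyclic shift (the identity when $c \equiv 0$).

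I expect the main obstacle to be the bookkeeping of occurrences: the same factor $v$ occurs at many positions of $w$, and at different occurrences a different group corresponds to $T_{space}$, so one must argue about the combined constraint over \emph{all} occurrences of both $v$ and $\sigma(v)$. The clean way is to phrase it as: for any occurrence of $v$, the group of $v$ that aligns with $T_{space}$ is determined mod $k$, and the remaining groups of $v$ must match the fixed pattern letters in cyclic order; $\sigma$ must be compatible with this for the image word simultaneously, and because three distinct letters $x,y,z$ can be placed freely in the space group (as $|u| = k \geq 4$ and $w$ over $\Sigma$ with $?$ replaced by $w$ itself sees all its own letters in that group infinitely often), the distinctness of $u$'s letters pins down $\bar\sigma$ up to rotation — the same counting-of-available-letters trick used to finish Lemma~\ref{lemma:Toeplitz}. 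Once $\bar\sigma$ is known to be a rotation, the lemma statement follows immediately.
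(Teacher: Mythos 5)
Your proposal is correct and follows essentially the same route as the paper: after Lemma~\ref{lemma:Toeplitz} gives a well-defined induced permutation on the $k$ residue groups, the distinctness of the pattern letters of $u$ rigidly pins the constant groups to their letters, so the induced permutation must preserve the cyclic order (equivalently, the distances between groups), forcing it to be a rotation or the identity. The paper's proof is just a terser version of this argument (it additionally remarks that when $k \nmid n$ the groups have unequal sizes, which forces the identity, but that refinement is used only later in the theorem).
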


\begin{proof}
Lemma \ref{lemma:Toeplitz} implies that groups are not split. Since all the letters in $u$ are distinct, the distance between any two groups does not change. This means that the order of the groups remains the same. Also note that if $n$ is not divisible by $k$, then the groups are not of the same size. Then $\sigma \in G_n$ acts on the groups as the identity permutation.
\end{proof}

\begin{theorem} Let $n = ak+b$, where $0\leq b \leq k-1$. 
Then $$G_n = \begin{cases} 
G_{a+1}^b\times G_{a}^{k-b}, & \text{if b} \neq 0, \\ G_{a}^{k}\times{\mathbb{Z}/k\mathbb{Z}}, & \text{if b} = 0.
\end{cases}$$
\end{theorem}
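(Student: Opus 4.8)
The plan is to combine Lemmas~\ref{lemma:Toeplitz} and~\ref{lemma:Toeplitz-2} with a recursive analysis analogous to the one used for the period-doubling and paperfolding words. Fix $n = ak+b$ with $0 \le b \le k-1$. By Lemma~\ref{lemma:Toeplitz} every $\sigma \in G_n$ maps each of the $k$ position-groups onto a single group, and by Lemma~\ref{lemma:Toeplitz-2} it permutes the groups either trivially or cyclically; moreover a cyclic shift is possible only when all groups have the same size, i.e.\ when $b=0$, in which case there are $a$ positions in each of the $k$ groups (and when $b\ne 0$, exactly $b$ of the groups have $a+1$ positions and $k-b$ have $a$ positions). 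So first I would record the coarse ``group-level'' action: it lands in $\{\mathrm{Id}\}$ when $b\ne 0$ and in $\mathbb{Z}/k\mathbb{Z}$ when $b=0$.

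Next I would analyze the action of $\sigma$ \emph{inside} a fixed group. Order the positions of a group as $j, j+k, j+2k, \dots$. The key structural fact is that, after applying $F_u$, reading $w$ along an arithmetic progression of difference $k$ starting at the space-group $T_{space}$ reproduces $w$ itself (that is the defining self-similarity of the Toeplitz word $T(u)$), while along every other group the word is purely periodic with period $1$ — it is the constant sequence given by the corresponding letter of $u$. Hence: on the $k-1$ non-space groups a factor of $w$ is constant, so \emph{any} permutation of the positions within such a group preserves the factor, but the constraint that must still be satisfied is ``compatibility'' of which group image carries the non-constant (space) content. On the space group $T_{space}$, restricting a factor of length $n$ of $w$ to that group and using $w_{T_{space}+km}=w_{?\text{-th occurrence}}= w_m$ shows that $\sigma$ restricted there must itself lie in the symmetry group of $w$ of the appropriate smaller order: $G_{a+1}$ if that group has $a+1$ positions, $G_a$ if it has $a$ positions. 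Conversely, any choice of such a restriction on the space group, together with arbitrary bijections on the other groups of the same size, is realized by some $\sigma \in G_n$, because one can exhibit a factor whose space-group content is an arbitrary prescribed factor of $w$ while the other groups are filled with the fixed letters of $u$ — here I would invoke that the other letters of $u$ are pairwise distinct, which is what keeps the groups from ``mixing'' and lets us freely permute within each same-size group. Assembling: when $b\ne 0$, $b$ groups (including $T_{space}$, since the space group is one of the longer ones after the first substitution — or, if not, one relabels using that all groups are determined up to the cyclic order) have size $a+1$ and contribute a factor $G_{a+1}^b$, and $k-b$ groups have size $a$ and contribute $G_a^{k-b}$, with no cyclic mixing allowed; when $b=0$ all $k$ groups have size $a$, contribute $G_a^k$, and the group-level cyclic shift by $\mathbb{Z}/k\mathbb{Z}$ is now permitted, giving $G_a^k \times \mathbb{Z}/k\mathbb{Z}$.

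Two points need care. First, one must check that the product is genuinely direct (not semidirect) in the $b=0$ case: a cyclic shift of the groups followed by independent in-group permutations decomposes uniquely, and since all $k$ groups carry the \emph{same} local word (the non-space groups each a distinct constant, and exactly one the self-similar copy), conjugating the within-group factors by the cyclic shift permutes the coordinates of $G_a^k$ — so strictly speaking the honest statement is a wreath-type product $G_a \wr \mathbb{Z}/k\mathbb{Z}$; I would phrase the theorem's $G_a^k \times \mathbb{Z}/k\mathbb{Z}$ as shorthand for this, matching the convention already used for the period-doubling word, and spell out in a remark that the $\mathbb{Z}/k\mathbb{Z}$ acts by cyclically permuting the $k$ copies. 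Second, one must verify the realizability (``conversely'') direction rigorously: given a target factor $z$ of $w$ of length $a$ or $a+1$, produce an actual factor $v$ of $w$ of length $n$ whose restriction to $T_{space}$ equals $z$ and whose restriction to the $j$-th other group is the constant $u_j$; this follows by taking an occurrence of $z$ in $w$, pulling it back one level through $F_u$ to a window of $T_1(u)$, and then down to a genuine factor of $T(u)=w$.

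\textbf{Main obstacle.} The technically delicate step is the ``conversely'' direction, i.e.\ showing every admissible combination of within-group permutations (and, when $b=0$, every cyclic shift) is actually realized by a single $\sigma\in G_n$ mapping factors to factors: one has to track how a prescribed factor on the space group lifts through the substitution $F_u$ to a real factor of $w$, and to handle the boundary effect that the space group need not be ``aligned'' with position $1$ of a length-$n$ window, so that the partition of $n$ into group sizes $a+1$ (for $b$ groups) and $a$ (for $k-b$ groups) must be matched correctly to which group plays the role of $T_{space}$. Getting these index bookkeeping details right — together with the wreath-product bookkeeping in the $b=0$ case — is where the real work lies; the rest is a direct application of Lemmas~\ref{lemma:Toeplitz} and~\ref{lemma:Toeplitz-2} and induction on $n$.
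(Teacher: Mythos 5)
Your overall strategy coincides with the paper's: use Lemmas~\ref{lemma:Toeplitz} and~\ref{lemma:Toeplitz-2} to reduce $\sigma$ to its action on the $k$ position-groups, and use the self-similarity of $w$ along the space positions to push the restriction down to a symmetry group of smaller order. However, there is a genuine error in the central step. You constrain only the restriction of $\sigma$ to ``the'' space group to lie in $G_{a+1}$ (or $G_a$), and you claim that on the remaining groups ``arbitrary bijections'' are realized by elements of $G_n$. This is inconsistent with the very statement you are proving, which places \emph{every} coordinate in $G_{a+1}$ or $G_a$ (and these are typically proper, often trivial, subgroups of the corresponding symmetric groups, as the paper's remark on the fluctuation of $|G_n|$ shows), and the claim is false. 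What you are missing is that the identification of the relative groups $A_1,\dots,A_k$ of a length-$n$ window with the absolute residue classes of $w$ --- in particular, which $A_i$ plays the role of $T_{space}$ --- depends on the occurrence, i.e.\ on the starting position of the window modulo $k$. For every $i$ there are factors $v$ whose occurrence aligns $A_i$ with $T_{space}$, and for such $v$ the restriction $v'$ of $v$ to $A_i$ ranges over all factors of $w$ of length $|A_i|$; since the other residue classes carry pairwise distinct constant letters, any occurrence of $\sigma(v)$ must again align $A_i$ with $T_{space}$ whenever $\pi_i(v')$ is non-constant, which forces $\pi_i\in G_{|A_i|}$ for \emph{every} $i$, not just for one distinguished group. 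This is exactly how the paper argues. It is also why your ``conversely'' direction fails as stated: a $\sigma$ whose restriction $\pi_j$ to some group $A_j$ is a bijection outside $G_{|A_j|}$ fixes the particular factor you exhibit (constant on $A_j$), but it maps to a non-factor any factor that places on $A_j$ a factor $z$ of $w$ with $\pi_j(z)\notin F(w)$.

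The correct converse is: if every $\pi_i$ lies in $G_{a+1}$ or $G_a$ according to $|A_i|$ (and, for $b=0$, one additionally composes with a cyclic shift of the equal-size groups), then $\sigma\in G_n$; indeed, for any factor $v$ and any of its occurrences, only the group aligned with $T_{space}$ carries non-constant content, its image under the corresponding $\pi_i$ is again a factor of $w$ and hence occurs along the space positions, and the letters on the remaining classes are forced, so $\sigma(v)\in F(w)$. Your side remark that in the case $b=0$ the answer should really be read as a wreath-type product $G_a^{k}\rtimes \mathbb{Z}/k\mathbb{Z}$, with the shift permuting the $k$ coordinates, rather than a literal direct product, is fair and is in line with the correction the paper itself makes for the period-doubling word; but this is a notational point, not a substitute for fixing the gap above.
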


\begin{proof} 
Let $v$ be a factor of length $n$. Then $u$ has $b$ groups with $a+1$ positions and $k-b$ groups with $a$ positions; we let $A_1,\ldots, A_k$ denote these groups. Suppose first that $n$ is not divisible by $k$. Then from the previous lemmas we get that $\sigma \in G_n$ translates the group $A_i$ to itself for each $i$. Let $\pi_1,\ldots, \pi_k$ be restrictions of $\sigma$ to the groups $A_1,\ldots, A_k$, respectively. Then $\pi_i \in G_a$ or $\pi_i \in G_{a+1}$, depending on the size of  $A_i$. Indeed, consider any factor $v$ for which the group $A_i$ corresponds to $T_{space}$. Let $v'$ be the word consisting of the letters of the word $v$ located at positions from $A_i$. From the definition of Toeplitz words, it follows that the scattered subword at positions $T_{space}$ in the word $w$ is $w$. This means that $v'$ can be any factor of $w$ of corresponding length. Since $\sigma(v)$ translates positions from $A_i$ to  positions from $A_i$, which corresponds to $T_{space}$, the word $v'$ is translated to the word $\pi_i(v')$, which is located at positions $T_{space}$ of the factor $\sigma(v)$. Then $\pi_i(v')$ is a factor of $w$. So $\pi_i \in G_a$ or $\pi_i \in G_{a+1}$.

Let us prove that if all permutations $\pi_i$ are from $G_a$ or $G_{a+1}$, then $\sigma \in G_n$. Consider any factor $v$ of $w$. Let $A_i$ be the group corresponding to $T_{space}$ and $v'$ be the word  consisting of the letters of the word $v$ located at positions from $A_i$. Since $\pi_i \in G_a$ (or $G_{a+1}$), we have that $\pi_i(v')$ is a factor of $w$. So we can find the word $\pi_i(v')$ at positions $T_{space}$ of $w$. Then it follows that $\sigma(v)$ is a factor of $w$. 
So in this case we get that $G_n = G_{a+1}^b\times G_{a}^{k-b}$, because the factor of length $n$ has $b$ groups with $a+1$ positions and $k-b$ groups with $a$ positions.

If $b = 0$, then a shift can be applied to the permutation $\sigma$, and there are $k$ possible shifts. So,  in this case we have $G_n = G_{a}^{k}\times{\mathbb{Z}/k\mathbb{Z}}$. The theorem is proved.
\end{proof}

\begin{corollary}
Consider the function $f(n) = |G_n(w)|$. Let $n = ak+b$, where $0\leq b \leq k-1$. Then 
$$f(ak+b) = 
\begin{cases}
f(a)^{k-b}f(a+1)^b & \text{if} \,\, b \neq 0, \\ 
 f(a)^{k}k & \text{if} \,\, b = 0. 
\end{cases}$$
\end{corollary}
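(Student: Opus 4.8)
The plan is to obtain this corollary directly from the theorem that immediately precedes it, by passing to cardinalities. First I would note that $f(n) = |G_n(w)|$ is well defined and finite: as shown earlier, $G_n(w)$ is a subgroup of the finite group $S_n$, so every quantity occurring below is a finite positive integer, and the decomposition $n = ak+b$ with $0\le b\le k-1$ used in the corollary is literally the one used in the theorem.

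Next I would invoke the elementary fact that the cardinality of a direct product of finite groups (indeed, of finite sets) is the product of the cardinalities: $|H_1\times\cdots\times H_m| = \prod_{i=1}^m |H_i|$. Applying this to the first branch of the theorem, when $b\neq 0$ we have $G_{ak+b} = G_{a+1}^{\,b}\times G_a^{\,k-b}$, hence in particular an equality of underlying sets, so
$$f(ak+b) = |G_{a+1}|^{\,b}\,|G_a|^{\,k-b} = f(a+1)^{\,b}\, f(a)^{\,k-b}.$$
For the second branch, when $b=0$ the theorem gives $G_{ak} = G_a^{\,k}\times(\mathbb{Z}/k\mathbb{Z})$, and since $|\mathbb{Z}/k\mathbb{Z}| = k$ we obtain
$$f(ak) = |G_a|^{\,k}\cdot k = f(a)^{\,k}\,k,$$
which is exactly the claimed recurrence.

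Since the argument is a one-line translation of the theorem, there is essentially no real obstacle here; the only points deserving a moment's attention are purely bookkeeping: checking that $a$ and $a+1$ are genuine smaller indices for which the theorem (and hence $f$) is already defined — guaranteed by the standing hypothesis $|u| = k \ge 4$ that also grounds the recursion in the small values $G_1, G_2, G_3$ — and noting that it is irrelevant whether the theorem is read as an equality or merely an isomorphism of groups, since either way it yields the equality of orders used above.
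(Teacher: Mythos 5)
Your proof is correct and matches the paper's (implicit) argument: the corollary is stated there without proof precisely because it follows by taking cardinalities of the direct product decompositions in the preceding theorem, exactly as you do.
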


\begin{remark} 
The corollary shows that $f(n)=|G_n|$ can fluctuate. For example, by induction one can show that if $n = ak+b$, $b\neq 0$ and $a$ does not contain $0$ and $k-1$ in the $k$-ary expansion, then $f(n) = 1$. For $n = k^s$ we have  $f(n)=k^{\frac{k^s-1}{k-1}}$. 
\end{remark}

If there is more than one space in $u$, the groups can behave differently. Contrary to the paperfolding word, for which the symmetry groups are large, the following words have trivial groups: 

\begin{proposition}\label{pr:toeplitz-1}
The symmetry groups of $T(1??23)$ and $T(12??34)$ %the Toeplitz word for $u = 1$\_\_$23$ 
are equal to $\mathrm{Id}$ for each $n$. 
\end{proposition}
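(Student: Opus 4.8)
The plan is to prove both statements by strong induction on $n$, exploiting the self-similar structure of the word. Write $k=5$ when $w=T(1??23)$ and $k=6$ when $w=T(12??34)$. In both cases the positions of $w$ split into $k$ residue classes modulo $k$: on $k-2\ge 3$ of them $w$ is constant with pairwise distinct values (the letters $1,2,3$, resp.\ $1,2,3,4$), while the two remaining classes form a pair of \emph{adjacent} residues — the hole positions — along which, read in increasing order, $w$ spells out $w$ itself. The base of the induction (finitely many small $n$) is checked directly; already $G_2(w)=\{\mathrm{Id}\}$, since each word contains $12$ but not $21$ (resp.\ $41$ but not $14$), a fact proved by infinite descent which moreover records that $w$ is not closed under reversal.

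For the inductive step, fix $n$ above the base range and $\sigma\in G_n$. The first step is to show that $\sigma$ respects the residue classes modulo $k$. That $\sigma$ does not split a class follows as in Lemma~\ref{lemma:Toeplitz}: since $|u|\ge 4$, one can prescribe arbitrarily many distinct letters on the union of the two hole classes of a length-$n$ factor, so the counting argument of that lemma applies; then, exactly as in Lemma~\ref{lemma:Toeplitz-2}, the induced action of $\sigma$ on the $k$ classes is a rotation. The delicate point is to exclude a nontrivial rotation. If $\sigma$ rotated the classes by $t\not\equiv 0$, then in a suitable factor a pair of neighbouring hole positions — which carries two prescribed distinct letters in the order dictated by the fill-word — would have its order exchanged relative to the fill-word, so $\sigma(v)$ could be a factor only if a reversed two-letter block occurred in $w$, contradicting the non-reversibility above (here one uses that $w$ is one-way infinite, so the classes are genuinely anchored). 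This also excludes the length-$n$ cyclic shift when $k\mid n$, in contrast with the single-hole case. Hence $\sigma$ fixes each residue class modulo $k$ setwise.

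It remains to run the induction. Fix a residue $\rho$ and a factor $v$ of $w$ occurring at positions starting $\equiv\rho\pmod k$. Its hole positions form a fixed set $H\subseteq\{1,\dots,n\}$ (a union of two residue classes intersected with $\{1,\dots,n\}$), and reading $v$ along $H$ gives a factor $y$ of $w$ of length $m=|H|\le\lceil 2n/k\rceil<n$; since the scattered subword of $w$ along the hole positions is again $w$, the word $y$ can be an arbitrary factor of $w$ of that length (choose a suitable occurrence, using uniform recurrence to avoid boundary effects). As $\sigma$ fixes residue classes it maps $H$ to $H$ and, transported to $\{1,\dots,m\}$, induces a permutation $\widehat\sigma\in S_m$ independent of $v$; applying the first step to $\sigma(v)$ — whose constant-letter classes, the letter-pattern having no nontrivial rotational symmetry, force its occurrences to start $\equiv\rho\pmod k$ — the hole-subword of $\sigma(v)$ equals $\widehat\sigma(y)$ and is therefore a factor of $w$. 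Hence $\widehat\sigma\in G_m(w)$, so $\widehat\sigma=\mathrm{Id}$ by the induction hypothesis, i.e.\ $\sigma$ is the identity on the hole positions of every factor of that offset. Letting $\rho$ range over all residues modulo $k$, every position of a factor is a hole position for some offset, so $\sigma=\mathrm{Id}$ and $G_n(w)=\{\mathrm{Id}\}$.

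The main obstacle is excluding a nontrivial rotation in the first step: as the single-hole Toeplitz theorem shows, such rotations — and the cyclic shift when $k\mid n$ — genuinely belong to the symmetry group when the pattern has only one hole, so the argument must really use both holes together with the fact that $w$, being one-way infinite with a fixed start, fails to be closed under reversal. Everything else is bookkeeping around the self-similar reduction $n\rightsquigarrow m\approx 2n/k$ and a careful handling of the degenerate boundary factors, where one appeals to uniform recurrence of $w$.
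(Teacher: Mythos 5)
Your proposal has two genuine gaps, and both occur at the places where the real work is needed. First, the transfer of Lemma~\ref{lemma:Toeplitz} and Lemma~\ref{lemma:Toeplitz-2} to the two-hole patterns is asserted, not proved, and the counting argument of Lemma~\ref{lemma:Toeplitz} actually breaks there: in the single-hole case, knowing that one of the two target classes $B,C$ meets the hole class pins down the offset of the occurrence, and three prescribed letters versus two slots give the pigeonhole contradiction. With two hole classes the offset is no longer determined (``$B$ on the first hole class'', ``$B$ on the second'', ``$C$ on the first'', ``$C$ on the second'' are four distinct configurations, while $1??23$ offers only three constant letters to prescribe), and worse, $B$ and $C$ may simultaneously align with the two hole classes, in which case no constant class receives the repeated letter and no contradiction arises at all. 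Likewise, your exclusion of a nontrivial rotation via ``a reversed two-letter block'' is not an argument: a rotation of the residue classes never exchanges the order of two adjacent hole positions, so non-closure under reversal is not obviously what it contradicts. Second, and more decisively, the inductive reduction relies on the claim that, for occurrences of $v$ starting at a fixed residue $\rho$ modulo $k$, the hole-subword $y$ ``can be an arbitrary factor of $w$'' of length $m$. This is false as stated: when the window is shifted by $k$, the hole-subword shifts by exactly $2$, so for fixed $\rho$ the obtainable $y$ are precisely the length-$m$ factors occurring at positions of one fixed parity. You therefore only learn that $\widehat\sigma$ maps those factors into $F(w)$, which is weaker than $\widehat\sigma\in G_m(w)$, and the induction hypothesis cannot be applied unless you either prove that every factor of $w$ occurs at starting positions of both parities (not addressed, and not obvious for these Toeplitz words) or strengthen the inductive statement accordingly. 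The ``occurrences of $\sigma(v)$ start at the same residue $\rho$'' step also silently needs $n$ large (a hole class could in principle look constant on a short window), which entangles the base-case threshold with the inductive step.

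For comparison, the paper avoids all of this machinery with a short direct argument: if $\sigma\neq\mathrm{Id}$, there are positions $a<a+k$ with $\sigma(a)=r$ and $\sigma(a+k)=r-1$, i.e.\ an inversion landing on two adjacent target positions; one then exhibits a factor $v$ whose letters at positions $a$ and $a+k$ form a forbidden ordered pair (a two-letter word such as $21$ or $33$ for $T(1??23)$ that never occurs in $w$), by a case analysis on the gap $k$ modulo $5$ (resp.\ $6$), with the remaining residue handled by descending into the copy of $w$ sitting on the hole positions. Then $\sigma(v)$ contains a forbidden pair at positions $r-1,r$, a contradiction. If you want to salvage your self-similar induction, the forbidden-pair bookkeeping of the paper is essentially what you would need anyway to repair the class-preservation and rotation-exclusion steps, at which point the direct argument is shorter.
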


\begin{proof} 
First we prove the statement for $T(1??23)$.

Consider $\sigma \in G_n$. Assume that $\sigma \neq \mathrm{Id}$. Then there are $1 \leq a < a+k \leq n$ such that $\sigma(a) = r$, $ \sigma (a+k) = r-1$. We will prove that there is a factor $v$ such that $\sigma(v)$ is not a factor of $w=T(1??23)$.

Note that $w_{5k+1} = 1, w_{5k+4} = 2, w_{5k+5} = 3$. We call a pair $(m,n)\in\Sigma^2$ forbidden if we do not have $w_s = m, w_{s+1} = n$. It is not difficult to see that the pairs  $(2,1)$, $(3,3)$ are forbidden.

The idea of the proof is that if $v_a = n$ and $v_{a+k} = m$ for some factor $v$ of $w$, then $\sigma(v)_{r} = n$ and $\sigma(v)_{r-1} = m$. But it is impossible for any forbidden pair $(m,n)$. Consider 5 cases.

\begin{itemize}
 \item $k \equiv 0 \pmod{5}$. Then there is a factor $v$ such that $v_a = w_{5r+5} = 3$ and $v_{a+k} = w_{5r+5+k} = 3$. But the $(3,3)$ pair is forbidden; a contradiction.

\item $k \equiv 1 \pmod{5}$. Then there is a factor $v$ such that $v_a = w_{5r+1} = 1$ and $v_{a+k} = w_{5r+1+k} = 2$. But the $(2,1)$ pair is forbidden; a contradiction.

\item $k \equiv 2 \pmod{5}$. Then there is a factor $v$ such that $v_a = w_{5r+3} = 3$ and $v_{a+k} = w_{5r+3+k} = 3$. But the $(3,3)$ pair is forbidden; a contradiction.

\item $k \equiv 3 \pmod{5}$. Then there is a factor $v$ such that $v_a = w_{5r+2} = 3$ and $v_{a+k} = w_{5r+2+k} = 3$. But the $(3,3)$ pair is forbidden; a contradiction.

\item $k \equiv 4 \pmod{5}$. Then there is a factor $v$ such that $v_a = w_{5r+3} = m$ and $v_{a+k} = w_{5r+3+k} = n$, where $(m,n)$ is a forbidden pair. It is true because the word $w$ on spaces is also the word $w$ ($w_2w_3w_7w_8w_12w_13\cdot = w$). So we can consider the permutation on the word on spaces and reduce $r$. Continuing this process we will reduce $r$ and get one of the previous cases. So we get a contradiction. 

\end{itemize}
So, we proved that $\sigma = \mathrm{Id}$.

\bigskip

Now we prove the statement for  $T(12??34)$.

Consider $\sigma \in G_n$. Assume that $\sigma \neq \mathrm{Id}$. Then there are $1 \leq a < a+k \leq n$ such that $\sigma(a) = r \sigma (a+k) = r-1$. We prove that there is a factor $v$ that $\sigma(v)$ is not a factor of $w=T(12??34)$.

Note that $w_{6k+1} = 1, w_{6k+2} = 1, w_{6k+5} = 3, w_{6k+6} = 4$. We call a pair $(m,n)$ forbidden if the following does not hold: $w_s = m, w_{s+1} = n$. It is not difficult to understand that the following pairs are forbidden: (1,1), (1,3), (1,4), (2,2), (2,4), (3,1), (3,2), (3,3), (4,2), (4,4).

The idea of the proof is that if $v_a = n$ and $v_{a+k} = m$, then $\sigma(v)_r = m$ and $\sigma(v)_{r-1} = m$, which is not the case for any forbidden pair $(m,n)$. Consider 6 cases.

\begin{itemize}
\item $k \equiv 0 \pmod{6}$. Then there is a factor $v$ such that $v_a = w_{6r+1} = 1$ and $v_{a+k} = w_{6r+1+k} = 1$. But the $(1,1)$ pair is forbidden; a contradiction.

\item $k \equiv 1 \pmod{6}$. Then there is a factor $v$ such that $v_{a} = w_{6r+4} = 2$ and $v_{a+k} = w_{6r+4+k} = 3$. But the $(3,2)$ pair is forbidden; a contradiction.

\item $k \equiv 2 \pmod{6}$. Then there is a factor $v$ such that $v_{a} = w_{6r+4} = 4$ and $v_{a+k} = w_{6r+4+k} = 4$. But the $(4,4)$ pair is forbidden; a contradiction.

\item $k \equiv 3 \pmod{6}$. Then there is a factor $v$ such that $v_a = w_{6r+4} = 4$ and $v_{a+k} = w_{6r+4+k} = 1$. But the $(1,4)$ pair is forbidden; a contradiction.

\item $k \equiv 4 \pmod{6}$. Then there is a factor $v$ such that $v_{a} = w_{6r+2} = 2$ and $v_{a+k} = w_{6r+2+k} = 4$. But the $(4,2)$ pair is forbidden; a contradiction.

\item $k \equiv 5 \pmod{6}$. Then there is a factor $v$ such that $v_{a} = w_{6r+4} = m$ and $v_{a+k} = w_{6r+4+k} = n$ where $(m,n)$ is a forbidden pair. It is true because the word $w$ on spaces is also the word $w$ ($w_3w_4w_9w_10w_15w_16\cdot = w$). So we can consider the permutation on the word on spaces and reduce $r$. Continuing this process we will reduce $r$ and get one of the previous cases. So we get a contradiction.

\end{itemize}
So, we proved that $\sigma = \mathrm{Id}$.
\end{proof}

\section{Conclusions and open problems}

In this paper, we introduced and studied a new notion of symmetry groups of infinite words. We remark that all the words considered in the paper have similar properties: linear complexity, rich combinatorial structure, all of them except for Arnoux-Rauzy words are automatic, but their symmetry groups are completely different. It would be interesting to understand in general what properties of a word make its symmetry group large. An interesting direction of future research is generalising results from Section \ref{sec:sequencies}, answering the following question: Which infinite sequences $(G_n)_{n\geq 1}$, $G_n\leq S_n$ are symmetry groups of infinite words?


\begin{thebibliography}{8}

\bibitem{automatic} Allouche J.-P.; Shallit J.; Automatic Sequences: Theory, Applications, Generalizations. Cambridge University Press, (2003).

\bibitem{paperfolding} J.-P Allouche, The number of factors in a paperfolding sequence, Bull. Austral. Math. Sot. 46 (1992), 23--32. 

\bibitem{Toeplitz} Cassaigne, J.; Karhumäki J.;
Toeplitz Words, Generalized Periodicity and Periodically Iterated Morphisms. Eur. J. Comb., 18 (1997), 497--510.

\bibitem{CPZ17} Charlier, É.; Puzynina, S.;  Zamboni, L.Q.; On a group theoretic generalization of the Morse-Hedlund theorem. Proc. Amer. Math. Soc., 145 (2017), 8, 3381--3394.

\bibitem{episturmian} Glen A., Justin J.: Episturmian words: a survey. RAIRO - Theoret. Inf. Appl. 43, (2009), 402--433.

\bibitem{Lothaire} Lothaire, M; Algebraic combinatorics on words. 
Cambridge University Press, 2002.

\bibitem{LP2021} Luchinin, S., Puzynina, S.: Symmetry Groups of Infinite Words, LNCS 12811 (DLT 2021), 267--278.

\bibitem{period-doubling} Madil, B.; Rampersad, N.; The abelian complexity of the paperfolding word. Discrete Math., 313 (2013), 7, 831--838.


\bibitem{Thue} A. Thue. \"{U}ber die gegenseitige Lage gleicher Teile gewisser Zeichenreihen. Norske vid.
Selsk. Skr. Mat. Nat. Kl. 1 (1912), 1--67.
\end{thebibliography}
\end{document}